\numberwithin{equation}{section}
  \newcommand\phantomsection\relax
  \newcommand{\url}[1]{#1}
  \newcommand{\href}[2]{#2}
\mathchardef\ordinarycolon\mathcode`\: \mathcode`\:=\string"8000 \begingroup
 \theoremstyle{plain}              
\newtheorem{definition}{Definition}[section]
\newtheorem{remark}{Remark}[section]
\newtheorem{example}{Example}[section]
\newtheorem{theorem}{Theorem}[section]
\newtheorem{proposition}[theorem]{Proposition}
\newcommand{\ind}[1]{\mathds{1}_{\{#1\}}}   
\newcommand{\E}[1]{\mathbb{E}\left[#1\right]}  
\newcommand{\Prob}[1]{\mathbb{P}\left(#1\right)} 
\newcommand{\blt}{\boldsymbol}
\newcommand{\R}{\mathbb{R}}
\DeclareMathOperator{\Min}{\wedge}
\DeclareMathOperator{\Max}{\vee}
\newcommand{\br}{\overline}
\newcommand{\wbr}{\overline}
\newcommand{\ep}{\epsilon}
\newcommand{\Node}{\mathcal{I}}
\newcommand{\Edge}{\mathcal{E}}
\newcommand{\Type}{\mathcal{J}}
\newcommand{\X}{\mathcal{X}}
\newcommand{\A}{\mathcal{A}}
\newcommand{\Q}{\mathcal{Q}}
\newcommand{\Z}{\mathcal{Z}}
\DeclareMathOperator*{\argmax}{arg\,max}
\newcommand{\indset}{\mathds{1}}
\newcommand{\Probn}[1]{\mathbb{P}^n\left(#1\right)}
\newcommand{\res}{r}
\newcommand{\reac}{x}
\title{A Fluid Model of an Electric~Vehicle~Charging Network}
\author[1]{Angelos\ Aveklouris}
\author[2,3]{Maria\ Vlasiou}
\author[2,4]{Bert\ Zwart}
\affil[1]{The University of Chicago Booth School of Business}
\affil[2]{Eindhoven University of Technology}
\affil[3]{University of Twente}
\affil[4]{Centrum Wiskunde en Informatica}
\begin{document}

\maketitle
\begin{abstract}
We develop and analyze a measure-valued fluid model keeping track of parking and charging requirements of electric vehicles in a local distribution grid. We show how this model arises as an accumulation point of an appropriately scaled sequence of stochastic network models.
{The invariant point of the fluid model encodes the electrical characteristics of the network and the stochastic behavior of its users, and it is characterized, when it exists, by the solution of a so-called Alternating Current Optimal Power Flow (ACOPF) problem.\\}
\end{abstract}

\textbf{Keywords: Electric Vehicle charging; fluid approximation; measure-valued processes; AC power flow model; linearized Distflow}\\
\textbf{2010 AMS Mathematics Subject Classification: 60K25, 90B15, 68M20}

\section{Introduction}

To deal with the effects of climate change, many countries are in the process of implementing new policy measures to stimulate the usage of electricity generated by renewable sources such as solar and wind. This comes with many societal challenges as well as opportunities for research.
The supply of energy is less predictable, which makes the task of keeping high-voltage transmission networks  reliable more challenging. In the local distribution grids, new products and services that can be used to balance the grid emerge (such as smart devices), but also create more intermittency.

In particular, electric vehicles (EVs) can cause a substantial additional load on local distribution grids \cite{2025scenario}.
This has led to a significant interest in the stochastic scheduling of electric vehicle networks. There are different ways to replenish the batteries of an EV. A stochastic network analysis of fast-charging networks has been performed in \cite{yudovina2015socially}. The analysis of a stochastic network of battery swapping infrastructures is performed in \cite{fionaswapping}.

The focus of the present paper is on analyzing congestion associated to {\em slow charging}, which happens when a car is parked while its owner is at home, at work, or shopping.  In \cite{carvalho2015critical}, it was suggested to model the evolution of slowly-charging electric vehicles in a local grid by bandwidth sharing networks, approximating the instantaneous allocation of electricity to vehicles by proportional
fairness. The main constraint that needs to be satisfied is that the voltage drop in the network needs to remain bounded.
 The focus in \cite{carvalho2015critical} was solely on simulation, assuming a Markovian model and infinitely many parking spaces for EVs.
Using simulations, the stability of proportional fairness and max-min fairness was examined.

In a recent paper \cite{aveklourisstochastic}, we proposed an extension of \cite{carvalho2015critical} by allowing for load limits, finitely many parking spaces, and deadlines (associated with parking times). The joint distribution of charging requirements and parking times was not restricted
to Markovian or independence assumptions. Using heuristic arguments, \cite{aveklourisstochastic} proposed a fluid model keeping track of the number of charged and uncharged cars in the system and an associated invariant point. This invariant point is shown to be computationally tractable in \cite{aveklourisstochastic}, as it is formulated in terms of an AC Optimal Power Flow problem with an exact convex relaxation.

The goal of the present paper is to put the analysis of \cite{aveklourisstochastic} on rigorous footing using measure-valued fluid limits.
As in \cite{aveklourisstochastic}, we allow the parking times and the charging requirements of electric vehicles to be dependent and generally distributed random variables. In addition, we consider general arrival processes with time-varying arrival rates and multiple electric vehicle types.  The distribution grid is explicitly modeled and we allow for multiple parking lots, each with finitely many parking spaces. (The fluid approximation of \cite{aveklourisstochastic} did not take the subtleties of dynamically rejecting vehicles at parking lots into consideration.)

Our work is connected to the literature on bandwidth-sharing networks. Such networks have been successfully used to model communication networks where the set of feasible schedules is determined by the maximum amount of data a communication channel can carry per time unit \cite{massoulie1999bandwidth}. The stochastic analysis of bandwidth-sharing networks was initially restricted to specific networks \cite{BonaldProutiere2002, Bonald2006}. The application of  fluid and diffusion approximations led to computationally tractable approximations of a large class of networks; see for example \cite{KangKellyLeeWilliams2008, YeYao2012, BorstEgorovaZwart2014, remerova14, ReedZwart2014, VlasiouZhangZwart2016}.

In the context of communication networks, proportional fairness is a non-trivial but justified approximation of the transmission control protocol (TCP). A similar justification in the context of EV charging is performed in
 \cite{ardakanian2013, fan2012}. In these papers, by using arguments similar to the seminal work \cite{kelly1997charging}, it is shown how algorithms like proportional fairness emerge in decentralized EV charging. Our class of controls contain proportional fairness as a special case.

Our analysis is mostly related to  \cite{remerova14}.
The main difference is that, in the setting of electric vehicles, an important constraint that needs to be satisfied is to keep the voltage drops bounded, making the bandwidth-sharing network proposed here different. This also causes new technical issues, as the capacity set can be non-polyhedral or even non-convex. In addition, arriving vehicles finding a full parking lot are discarded; we assume such cars park on a regular parking spot. This leads to the additional technical complication of a loss process in a measure-valued context.

We now describe our contributions in more detail. We develop a measure-valued fluid model for the vector process which describes the number of total and uncharged EVs in each parking lot, allowing the dynamics of the stochastic model to be approximated with a deterministic model. This model  depends on the joint distribution of the charging requirements and the parking times. We  show that our measure-valued fluid model arises as a weak limit of a vector of measure-valued processes under an appropriate scaling. Moreover, under an additional assumption on the network topology, we show that the invariant point of this dynamical system is unique and can be characterized in a computationally friendly manner by formulating a nontrivial AC optimal power flow problem (ACOPF), which is tractable as its convex relaxation is exact in many cases; as mentioned before, this characterization was observed and applied in \cite{aveklourisstochastic}.

In order to prove properties for the solutions of the fluid model, we investigate the properties of the bandwidth allocation function in our setting where the capacity set is convex. We establish similar continuity properties of the allocation function as in \cite{ReedZwart2014}.
While the structural properties of a linearized voltage model can be developed in full, for the AC power flow equations we were only able to show
continuity of the allocation function. We conjecture that Lipschitz continuity and a monotonicity property hold as well, but leave
these problems open; we refer to Sections \ref{ch5:Perturbation} and \ref{Ch5: Convergence to IP} for more specific comments.

The rest of this paper is organized as follows. In Section~\ref{Ch5:model description}, we provide
a detailed model description. In particular, we introduce our stochastic model, the power flow models that we use, and we define the system dynamics. Next, in Section~\ref{ch5:Perturbation}, we present a continuity property of the optimal power allocation. Then, we move to the stochastic model. A fluid model is presented in
Section~\ref{ch5: fluid model}, where we also study its properties.
Section~\ref{Ch5: Fluid approximation} shows that the fluid model can arise as a weak limit of the fluid-scaled processes. In Section~\ref{Ch5: Convergence to IP}, we focus on the invariant analysis of the fluid model under an additional assumption on the network topology and conclude with a counterexample of monotonicity of general tree networks.
All proofs are gathered in Sections~\ref{ch5:proofs of pertupation}--\ref{ch5:proofs conv to IP}.

\section{Model description}\label{Ch5:model description}
In this section, we provide a detailed formulation of our model and explain various notational conventions that are used in the remainder of this work.
 The model description in this section is nearly identical to that in \cite{aveklourisstochastic}. We include all details on the network structure and physical characteristic for completeness; the main difference is that the measure-valued state descriptor is fully developed and analyzed in the present paper. To this end, we also require more sophisticated notation, which we introduce first.

\subsection{Preliminaries}
We introduce the  notational conventions that will be used throughout the paper. All
vectors and matrices are denoted by bold letters. Further,  $\R$ is the set of real numbers,
$\R_+$ is the set of nonnegative real numbers, and $\mathbb{N}$ is the set of strictly positive
integers.
For two real numbers $x$ and $y$, define $x\Max y:=\max\{x,y\}$,
$x\Min y:=\min\{x,y\}$, and $x^+:=x\Max 0$. For two vectors $\blt{x}, \blt{y}\in \R^I$, define the coordinate-wise product $\blt{x}\circ\blt{y}:=(x_1y_1, \ldots, x_Iy_I)$ (i.e, the Hadamard product) and the maximum norm $\|\blt{x}\|:=\max\limits_{1\leq i \leq I}|x_i|$. Vector inequalities hold coordinate-wise, namely $\blt{x}>\blt{y}$ implies that $x_i>y_i$ for all $i$.
Furthermore, $\blt I$ represents the identity matrix and $\blt e$ and $\blt e_0$ are the vectors
consisting
of 1's and 0's, respectively, the dimensions of which are clear from the context.
Also, $\blt {e}_i$ is the vector whose $i^\text{th}$ element is 1 and the rest are all 0.

Let $Y$ be a metric space. We denote by  $\mathcal{C}(Y,Y)$ the space of continuous functions $f:Y\rightarrow Y$ and by $\mathcal{C}_b(Y,Y)$ the space of continuous and bounded functions $f:Y\rightarrow Y$.
By $\mathcal{D}(Y,Y)$ denote the space of functions $f:Y\rightarrow Y$ that are right continuous with left limits endowed with the $J_1$ topology; i.e., the Skorokhod space. Further, we write $X(\cdot):=\{X(t), t\geq 0\}$ to represent a stochastic process and $X(\infty)$ to represent a stochastic process in steady-state.  Moreover, $\overset{d}=$ and
$\overset{d}\rightarrow$ denote  equality and convergence in distribution (weak convergence).

Let $\mathcal{M}(Y)$ be the space of Randon measures (i.e., locally finite and inner regular measures) on $Y$, endowed with the Borel $\sigma$-algebra denoted by $\mathcal{B}(Y)$. Further, $\mathcal{M}_F(Y)$ is the space of the finite nonnegative measures in $\mathcal{M}(Y)$ equipped with the weak topology. We say that a sequence of measures  $\mu^n$ in
$\mathcal{M}_F(Y)$ converges to $\mu$ in the weak topology and we write
$\mu^n \overset{\mathcal{W}} \rightarrow \mu$ if and only if for each
$f\in \mathcal{C}_b(Y)$,
\begin{equation*}
\langle f,\mu_n \rangle \rightarrow
 \langle f,\mu \rangle,\ \text{as}\ n \rightarrow \infty,
\end{equation*}
where $\langle f,\mu \rangle:=\int_{Y} f(y) \mu(dy)$. Weak convergence in
$\mathcal{M}_F(Y)$  is equivalent to convergence in the Prokhorov metric: for $\mu, \nu \in \mathcal{M}_F(Y)$,
\begin{equation*}
\begin{split}
  d(\mu,\nu):=
  \inf
  \left\{\epsilon: \mu(B)\leq \nu(B^{\epsilon})+\epsilon\ \text{and}\
  \nu(B)\leq \mu(B^{\epsilon})+\epsilon \right. \\
  \left. \text{for any nonempty closed}\ B\subseteq Y
  \right\},
  \end{split}
\end{equation*}
where $B^{\epsilon}$ is the $\epsilon$-neighborhood of $B$, i.e.,
$B^{\epsilon}:=\{y\in Y: dist(y,B)\leq \epsilon\}$. When $Y=\R^k$, then $dist(y,B):=\inf\limits_{\blt{x}\in B}\|\blt{y}-\blt{x}\|$.
For $\blt{\mu}, \blt{\nu} \in \mathcal{M}_F(Y)^k$, define
\begin{equation*}
d_k(\blt{\mu},\blt{\nu})
:=\max\limits_{1\leq i\leq k}d(\mu_i,\nu_i).
\end{equation*}
It is known that $\left(\mathcal{M}_F(Y)^k, d_k\right)$ is a separate and complete space \cite{billingsley1995probability}; i.e., a Polish space. When $Y=\R^n_+$, we simplify the notation to $\mathcal{M}_F$.

\subsection{Network and infrastructure}
We consider the typical situation where a low-voltage distribution network has a tree structure. Thus, take a rooted tree $\mathcal{G}=(\Node,\Edge)$, where $\Node=\{0,1,\ldots, I\}$, denotes its set of nodes (buses) and $ \Edge$ is its set of directed edges, assuming that node $0$ is the root node (known as ``\textit{feeder}''). Denote by $\ep_{ik}\in \Edge $ the edge that connects node $i$ to node $k$, assuming that $i$ is closer to the root node than $k$. Let $\Node(k)$ and $\Edge(k)$ be the node and edge set of the subtree rooted in node $k \in \Node$. The active and reactive power consumed by the subtree $(\Node(k),\Edge(k))$ are $P_{\Node(k)}$ and $Q_{\Node(k)}$. The resistance, the reactance, and the active and reactive power losses along edge $\ep_{ik}$ are denoted by $\res_{ik}$, $\reac_{ik}$, $L^P_{ik}$, and $L^Q_{ik}$, respectively. Moreover, $V_i$ is the voltage at node $i$ and $V_0$ is known. At any node, except for the root node, there is a charging station with $K_i>0$, $i \in \Node \setminus \{0\}$, parking spaces (each having an EV charger). Further, we assume that there are $\Type=\{1,\ldots,J\}$ different types of EVs indexed by $j$.

\subsection{Stochastic model for EVs}
Type-$j$ EVs arrive at node $i$ according to a counting process $E_{ij}(\cdot):=\{E_{ij}(t), t\geq 0\}$; i.e., $E_{ij}(t)$ is the number of EVs that arrive into the parking lot in the time interval $(0,t]$. We assume that all $E_{ij}(\cdot)$ are finite, nondecreasing processes with $E_{ij}(0)=0$,  $E_{ij}(t)-E_{ij}(t^-)\in \{0,1\}$, and
$\E{E_{ij}(t)}=\int_{0}^{t} \lambda_{ij}(s)ds$ where $\lambda_{ij}(s)>0$ are integrable functions.
Moreover, let $\zeta_{ijl}$ denote the arrival time of the $l^{\text{th}}$ type-$j$ EV at node $i$.
If all spaces are occupied, a newly arriving EV does not enter the system, but is assumed to leave immediately.

We now turn to the model characteristics of the EVs. Each EV has a random charging requirement (counted in time) and a random parking time. These depend on the type of the EV and the location that it is parked (i.e., the node), but are independent between EVs. Our framework is general enough to distinguish between types. For example, we can classify types according to intervals of ratio of the charging requirement and parking time and/or according to the contract they have with the network provider. An EV leaves the system after its parking time expires. It may not be fully charged. If an EV finishes its charge, it remains at its parking space without consuming power until its parking time expires. EVs that have finished their charge are called ``\textit{fully charged}''.

Let $B_{ijl}$ and $D_{ijl}$ denote the charging requirement and the parking time of the $l^{\text{th}}$ EV of type-$j$ at node $i$.  In queueing terminology, these  quantities are respectively called \textit{service requirements} and \textit{deadlines}. Moreover, we assume that the sequence $\left\{B_{ijl}, D_{ijl},\ l \in \mathbb{N}\right\}$ is a sequence of i.i.d.\ copies of a random vector $\left(B_{ij}, D_{ij}\right)$  with  distribution law $F_{ij}(A) = \Prob{(B_{ij}, D_{ij})\in A}$ for any Borel set $A\in \mathcal{B}(\R^2_+)$. Further, for $l=1, \ldots, Q_{ij}(0)$ we denote by $(B_{ijl}^0, D_{ijl}^0)$  the residual charging requirement and the residual parking time of the initial population of type-$j$ at node $i$. Moreover, we assume the probability density function (pdf) of the parking times $f_{D_{ij}}(\cdot)$ exists  with $f_{D_{ij}}(0)>0$ for any $i,j\geq1$. Note that it is possible to communicate an indication of the charging time $B$ and the parking time $D$ by the owner of an EV \cite{arif2016}.
The model is illustrated in Figure~\ref{fig:model}.
\begin{figure}[t]
\centering
\includegraphics[scale=0.5]{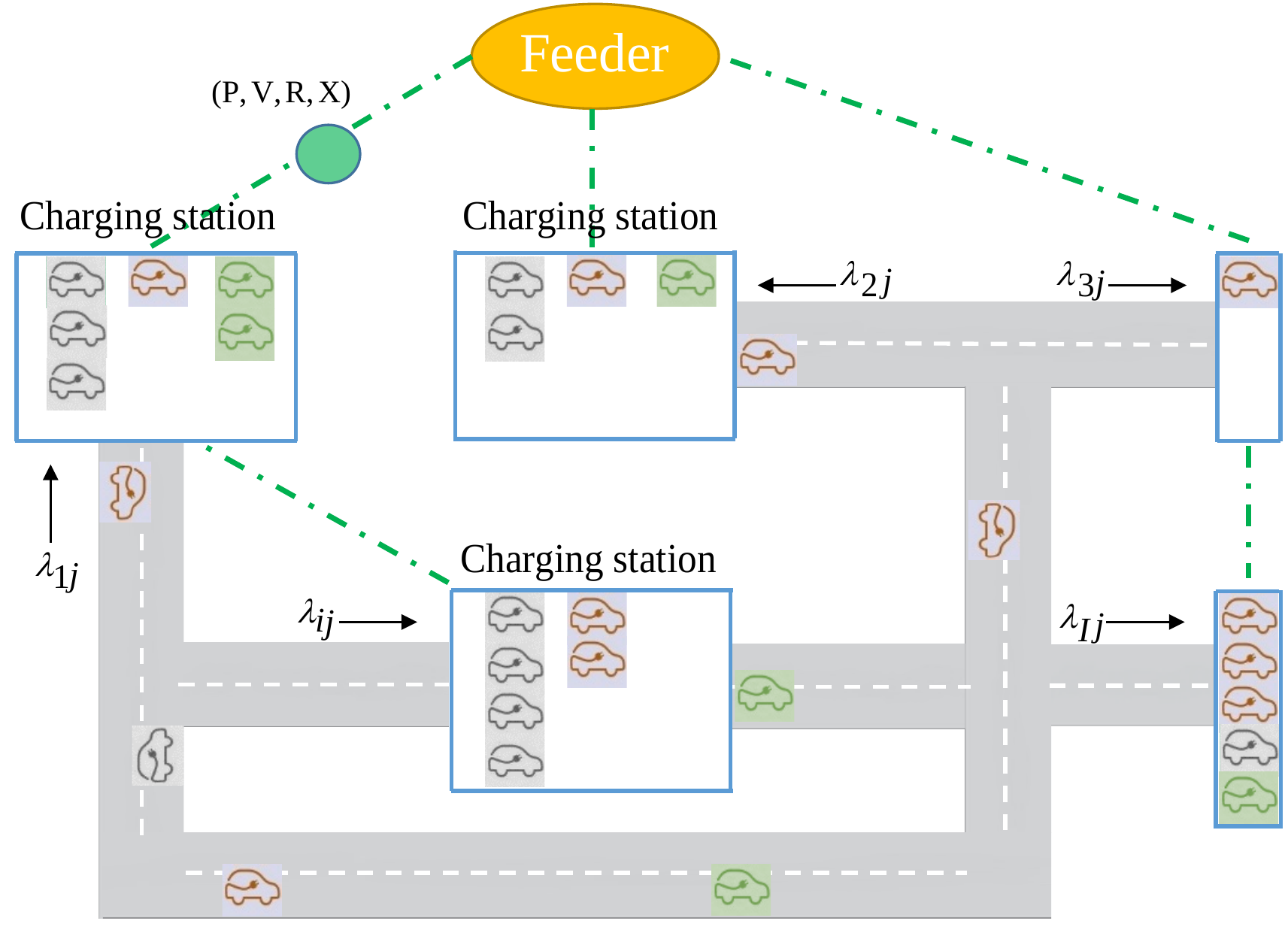}
\caption{A network with $\Type$ types of EVs and constant arrival rates.}
\label{fig:model}
\end{figure}

\subsection{Charging control rule}\label{sec:charging control rule}
An important part of our framework is the way we specify how the charging of EVs takes place. Let the number of uncharged vehicles (of all types and in all nodes) be given by the vector $\blt{z}\in [0,\infty)^{I \times J}$; i.e., $z_{ij}$ is the number of uncharged vehicles of type-$j$ in node $i$. We assume the existence of a vector function $\blt p(\blt z)=(p_{ij}(\blt z): i\in \Node \setminus \{0\},\ j\in \Type)$ that specifies the instantaneous rate of power each uncharged vehicle receives.
Moreover, we assume that this function is obtained by optimizing a ``global'' function. Specifically, for a type-$j$ EV at node $i$,  we associate a function $u_{ij}(\cdot)$ which is strictly increasing and concave in $\R_{+}$, twice differentiable in $(0,\infty)$ with $\lim_{x \rightarrow 0} u_{ij}'(x)=\infty$. The  charging rate $\blt p(\blt z)$ is then determined by
$\underset{}{\text{max}_{\blt p}}  \sum_{i=1}^{I} \sum_{j=1}^{J} z_{ij} u_{ij}(p_{ij})$
subject to a number of constraints that take into account physical limits on the charging of the batteries, load limits, and most importantly voltage drop constraints.
An important example is the choice $u_{ij}(p_{ij}) = w_{ij} \log p_{ij}$, which is known as \textit{weighted proportional fairness}.

We next introduce the physical constraints of the network. The maximum electric power that can be consumed in total by all cars
is $M_i>0$ at node $i$.  Each type-$j$ EV can be charged at a rate that is at most equal to  $c_{j}^{\text{max}}$. That is,
 \begin{equation}\label{eq:nodecon}
    \sum_{j=1}^{J} z_{ij}p_{ij} \leq M_i \quad \text{and}
    \quad
  0 \leq p_{ij} \leq c^{max}_j.
 \end{equation}
We refer to \eqref{eq:nodecon} as ``\textit{load constraints}''.
In addition, we impose \textit{voltage drop constraints}. These constraints rely on the load flow model used. Two of these models that we consider are described next.
\subsubsection{A simplified AC voltage model}
We consider a simplification of the full AC power flow equations, based on the typical situation that voltage angle differences in distribution networks are negligible
\cite[Chapter 3]{kersting2012distribution}. Under this assumption, Kirchhoff's law \cite[Eq.~1]{low14I} takes the form, for $\ep_{pk}\in \Edge$,
\begin{equation}\label{eq:AC}
  V_p V_k-V_kV_k-P_{\Node(k)} \res_{pk}-Q_{\Node(k)} \reac_{pk}=0,
\end{equation}
where $p\in \Node$ denotes the unique parent of node $k$.
The previous equations are non-linear. Applying the transformation
\begin{align*}
\blt{W}(\ep_{pk})
=\left(
  \begin{array}{cc}
    V_p^2 & V_p V_k \\
    V_k V_p & V_k^2 \\
  \end{array}
\right)
=:
\left(
  \begin{array}{cc}
    W_{pp} & W_{pk} \\
    W_{kp} & W_{kk} \\
  \end{array}
\right)
\end{align*}
leads to linear equations (in terms of $\blt{W}(\ep_{pk})$),
\begin{equation}\label{eq:KVL}
W_{pk}-W_{kk}-P_{\Node(k)} \res_{pk}-Q_{\Node(k)} \reac_{pk}=0,\ \ep_{pk}\in \Edge.
\end{equation}
Note that $\blt{W}(\ep_{pk})$ are positive semidefinite matrices (denoted by $\blt{W}(\ep_{pk})  \succeq  0$) of rank one.
The active and reactive power consumed by the subtree $(\Node(k),\Edge(k))$ are given by
\begin{align}
P_{\Node(k)}&= \sum_{l\in \Node(k)}\sum_{j=1}^{J}
z_{lj} p_{lj} +\sum_{l\in \Node(k)}
\sum_{\ep_{ls}\in \Edge(k)}L^P_{ls}, \label{eq:power}\\
Q_{\Node(k)}&=\sum_{l\in \Node(k)}\sum_{\ep_{ls}\in \Edge(k)}L^Q_{ls},\nonumber
\end{align}
where
by \cite[Appendix~B]{carvalho2015critical},
\begin{align*}\label{eq:APL}
L^P_{ls}&=(W_{ll}-2W_{ls}+W_{ss})\res_{ls}/(\res^2_{ls}+\reac^2_{ls}),\\
L^Q_{ls}&=(W_{ll}-2W_{ls}+W_{ss})\reac_{ls}/(\res^2_{ls}+\reac^2_{ls}).
\end{align*}
Note that $W_{kk}$ are dependent on the vectors $\blt p$ and $\blt z$. We sometimes write $W_{kk}(\blt p,\blt z)$ when we wish to emphasize the dependence. The function
$\blt p(\blt z)$ is given by
\begin{equation}\label{OP}
\begin{aligned}
& \underset{}{\max\limits_{\blt{p},\blt{W}}}
& & \sum_{i=1}^{I} \sum_{j=1}^{J} z_{ij} u_{ij}(p_{ij}) \\
& \text{subject to}
&&   \eqref{eq:nodecon},\eqref{eq:KVL},
\ \underline{ \upsilon}_i \leq W_{ii} \leq \overline{\upsilon}_i ,\\
&&& \blt{W}(\ep_{ik})  \succeq  0,\ \text{rank}(\blt{W}(\ep_{ik}))=1,\ \ep_{ik}\in \Edge,
\end{aligned}
\end{equation}
for $z_{ij}>0$. If $z_{ij}=0$, then  $p_{ij}=0$. In addition, $0<\underline{ \upsilon}_k \leq W_{00} \leq \overline{\upsilon}_k $ are the voltage limits.
Observe that the optimization problem~\eqref{OP} is non-convex and in general NP hard due to the rank-one constraints. Removing the non-convex constraints yields a convex relaxation, which is a second-order cone program, namely
\begin{equation}\label{ROP}
\begin{aligned}
& \underset{}{\max\limits_{\blt{p},\blt{W}}}
& & \sum_{i=1}^{I} \sum_{j=1}^{J} z_{ij} u_{ij}(p_{ij}) \\
& \text{subject to}
&&   \eqref{eq:nodecon},\eqref{eq:KVL},
\ \underline{ \upsilon}_i \leq W_{ii} \leq \wbr{\upsilon}_i ,\\
&&& \blt{W}(\epsilon_{ik})  \succeq  0,\ \epsilon_{ik}\in \Edge.
\end{aligned}
\end{equation}
Further, by Remark~\ref{re:AlCon} (see below) and
 \cite[Theorem 5]{low14}, we obtain that the convex-relaxation problem is exact. Defining
the bandwidth allocation function
$\blt{\Lambda}(\blt{z}):=\blt{p}(\blt{z})\circ\blt{z}$, i.e.,
$\Lambda_{ij}(\blt{z})=p_{ij}(\blt{z})z_{ij}$ for $i,j\geq 1$, the optimization problem (OP) \eqref{ROP} takes the following equivalent  form
\begin{equation}\label{ROPband}
\begin{aligned}
& \underset{}{\max\limits_{\blt{\Lambda},\blt{W}}}
& & \sum_{i=1}^{I} \sum_{j=1}^{J} z_{ij} u_{ij}\left(\Lambda_{ij}/z_{ij}\right) \\
& \text{subject to}
&&   \sum_{j=1}^{J} \Lambda_{ij} \leq M_i, \quad
  0 \leq \Lambda_{ij} \leq z_{ij}c^{max}_j, \\
&&& \eqref{eq:KVL}, \ \underline{ \upsilon}_i \leq W_{ii}(\blt{\Lambda}) \leq \wbr{\upsilon}_i ,\\
&&& \blt{W}(\epsilon_{ik})  \succeq  0,\ \epsilon_{ik}\in \Edge.
\end{aligned}
\end{equation}
Note that the constraints $\blt{W}(\epsilon_{ik})  \succeq  0$ are equivalent
to $W_{ii}W_{kk}-W_{ik}^2\geq 0$, since we consider $W_{ii}>0$ for any node $i\geq 1$. In the sequel, we freely use both formulations.
\subsubsection{Linearized Distflow model}\label{sec:Distflow}
Though the previous voltage model is tractable enough for a convex relaxation to be exact, it is rather complicated. Assuming that
the active and reactive power losses on edges are small relative to the power flows, but now allowing the voltages to be complex numbers, we arrive at a linear approximation of the previous model, called the \textit{linearized (or simplified) Distflow model} \cite{Baran89}.
In this case, the voltage magnitudes $W_{kk}^{lin}:=|V_{k}^{lin}|^2$ have an analytic expression \cite[Lemma~12]{low14I}:
\begin{equation}\label{eq:Dist}
  W_{kk}^{lin}(\blt p,\blt z)=W_{00}
  -2\sum_{\ep_{ls}\in \mathcal{P}(k)}\res_{ls}\sum_{m \in \Node(s)}
  \sum_{j=1}^{J} z_{mj}p_{mj},
\end{equation}
where the $\mathcal{P}(k)$ is the unique path from the feeder to node $k$.
\begin{remark}\label{re:AlCon}
Note that $W_{kk}^{lin}\leq W_{00}$ for all nodes $k$, as we assume that the nodes only consume power, and by \cite[Lemma~12]{low14I} we obtain
$W_{kk}(\blt p,\blt z) \leq W_{kk}^{lin}(\blt p,\blt z)$. That is, we can remove the constraints $W_{kk}(\blt{\Lambda}) \leq \wbr{\upsilon}_k $ from \eqref{OP}.
\end{remark}
To derive the representation of the power allocation mechanism $\blt p (\blt z)$ in this setting, one replaces the constraints in~\eqref{OP} by \eqref{eq:nodecon} and
$\underline{ \upsilon}_k \leq W^{lin}_{kk}(\blt p,\blt z)$.

\subsection{State descriptor}\label{ch5:State descriptor}
In this section, we introduce the dynamics that describe the evolution of the system. Specifically, we will now incorporate in the system dynamics all residual processes needed to obtain a Markovian system.
Let $\mathcal{Q}_{ij}(\cdot)$ and $\mathcal{Z}_{ij}(\cdot)$ be non-negative discrete  measures for $i,j\geq 1$. The total number of type-$j$ EVs at node $i$ at time $t>0$ and the number of uncharged EVs are given by
$Q_{ij}(t)=\langle 1,\mathcal{Q}_{ij}(t)\rangle$
and
$Z_{ij}(t)=\langle 1,\mathcal{Z}_{ij}(t)\rangle$,
respectively.
Moreover, $ Q_i(t):=\sum_{j=1}^{J} Q_{ij}(t)$
gives the total number of EVs at node $i\geq 1$.

Recall that $\zeta_{ijl}$ is the arrival time of the $l^{\text{th}}$ EV of type-$j$ at node $i$.
The residual parking time of the $l^{\text{th}}$ newly arriving EV of type-$j$  can be written as
$ D_{ijl}(t):=\left(D_{ijl}-(t-\zeta_{ijl})\right)^+ $, $l=1,\ldots, E_{ij}(t)$
and for the initial population
$ D_{ijl}^0(t):=(D_{ijl}^0-t)^+ $, $l=1,\ldots, Q_{ij}(0)$.
In order to define the residual charging requirements, we first introduce the following operators:
\begin{equation}\label{eq:service}
S_{ij}(\blt{z},s,t)=\int_{s}^{t} p_{ij}(\blt{z}(u)) du.
\end{equation}
For $s\leq t$, $S_{ij}(\blt{Z},s,t)$ is the \emph{cumulative bandwidth} allocated per type-$j$ EV at node $i$ during time interval $[s,t]$. The residual charging requirement of the $l^{\text{th}}$ type-$j$ EV at node $i$ at time $t\geq 0$ is given by
\begin{equation*}\label{eq:band}
B_{ijl}(t)=\left(B_{ijl}-S_{ij}(\blt{Z},\zeta_{ijl},t) \right)^+,
\end{equation*}
for the newly arriving EVs  and
$B_{ijl}^0(t)=(B_{ijl}^0-S_{ij}(\blt{Z},0,t) )^+$, $l=1,\ldots, Z_{ij}(0)$, for the initially uncharged EVs. Now, we define the measure-valued state descriptor  for any $t\geq 0$ and for any Borel set
$B \subseteq \R$,
\begin{equation}\label{eq:MVQ}
\mathcal{Q}_{ij}(t)(B):=\sum_{l=1}^{Q_{ij}(0)}
\delta^{+}_{D_{ijl}^0(t) }\left(B\right)
+\sum_{l=1}^{E_{ij}(t)} \delta^{+}_{ D_{ijl}(t) }\left(B\right) \ind{ Q_i(\zeta_{ijl}^-)<K_i}.
\end{equation}
The measure $\delta^{+}_{\cdot}(B)$ is the Dirac measure restricted on $(0,\infty)$; i.e., $\delta^{+}_{x}(B):=\delta_{x}(B\cap(0,\infty))$ and $\delta_{x}(B)=1$ if $x\in B$. The measure $\mathcal{Q}_{ij}(t)(B)$ counts the total number of type-$j$ EVs in node $i$ whose residual parking time belongs to the Borel set $B$.

The number of uncharged EVs for which the minimum between their residual charging requirement and their residual parking time belongs to any Borel set $B' \subseteq \R^2$ is given by
\begin{equation}\label{eq:MVZ}
\mathcal{Z}_{ij}(t)(B'):=\sum_{l=1}^{Z_{ij}(0)}
\delta^{+}_{\left(B_{ijl}^0(t), D_{ijl}^0(t) \right)} \left(B'\right)
+\sum_{l=1}^{E_{ij}(t)} \delta^{+}_{\left(B_{ijl}(t), D_{ijl}(t) \right)}\left(B'\right)
\ind{ Q_i(\zeta_{ijl}^-)<K_i}.
\end{equation}
The measure $\delta^{+}_{(\cdot, \cdot)}(B')$ is the Dirac measure restricted on $(0,\infty)^2$; i.e., $\delta^{+}_{\left(x_1,x_2\right)}(B'):=
\delta_{\left(x_1,x_2\right)}(B'\cap(0,\infty)^2)$ and
 $\delta_{\left(x_1,x_2\right)}(B')=1$ if $x_1\Min x_2\in B'$. Last, note that
 $\{ Q_i(\zeta_{ijl}^-)<K_i\}$ represents the event that there is an idle EV charger right before the arrival of the $l^{\text{th}}$ type-$j$ EV.
As not all EVs enter the system, we naturally define the following stochastic processes.
First, the number of accepted type-$j$ EVs at node $i$ until time $t>0$ is given by
\begin{equation}\label{eq:Accepted}
A_{ij}(t)= \sum_{l=1}^{E_{ij}(t)}
 \ind{Q_i(\zeta_{ijl}^-) <K}.
\end{equation}
Next, the number of rejected EVs until time $t>0$ is given by
\begin{equation}\label{eq:Rejected}
R_{ij}(t)=\sum_{l=1}^{E_{ij}(t)}
 \ind{Q_i(\zeta_{ijl}^-) =K}.
\end{equation}
Observe that the following relation holds:
$  A_{ij}(t)+R_{ij}(t)=E_{ij}(t)$.

Having introduced the stochastic model, which is defined through equations \eqref{eq:MVQ}--\eqref{eq:Rejected}, we move to the main results of this paper. We first study some properties of the bandwidth allocation function in Section~\ref{ch5:Perturbation}. We then  define an appropriate fluid model in Section~\ref{ch5: fluid model} and derive some of its properties.

\section{Continuity of the optimal allocation function}\label{ch5:Perturbation}

In this short section, we state some structural properties of the optimal allocation function, which may be of independent interest. In particular,  we show that the optimal solution of \eqref{ROPband} is continuous under the AC power flow model \eqref{eq:KVL}. This result is needed in Section~\ref{Ch5: Fluid approximation} in order to show convergence of the fluid-scaled processes. Last, in power system analysis, rigorous proofs are typically difficult and require additional assumptions on the distribution system \cite{dvijotham2017high}, even if one ignores the stochastic dynamics. In the rest of this section, we make the additional assumption that the ratio of resistance and reactance is constant for all the lines, i.e., $\frac{\res_{pl}}{\reac_{pl}}$ remains constant for any $\epsilon_{pk}\in \Edge$.

We show that the optimal aggregated  power allocation $\blt{\Lambda}(\blt{z})$, $\blt{z}\in (0,\infty)^{I \times J}$, is a continuous function in $\blt{z}$.  In order to establish this property, we first present a preliminary result.
\begin{proposition}\label{prop:feasible}
Let $\blt{z}\in [0,\infty)^{I \times J}$ and $\blt{\Lambda}(\blt{z})$ be a feasible point of \eqref{ROPband}.  Given a point
$\blt{0}\leq \blt{\Lambda}' \leq \blt{\Lambda}(\blt{z})$, we have that $\blt{\Lambda}'$ is also a feasible point of \eqref{ROPband}.
\end{proposition}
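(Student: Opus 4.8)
The plan is to verify each constraint of \eqref{ROPband} in turn for the smaller point $\blt{\Lambda}'$, exploiting that all constraints are either coordinatewise box constraints or are monotone in the load. First, the load constraints are immediate: since $0\leq \Lambda_{ij}'\leq \Lambda_{ij}(\blt z)$, we have $\sum_j \Lambda_{ij}' \leq \sum_j \Lambda_{ij}(\blt z)\leq M_i$ and $0\leq \Lambda_{ij}'\leq \Lambda_{ij}(\blt z)\leq z_{ij}c_j^{max}$. So the only work is with the voltage side of the problem.

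The key observation is that decreasing the aggregated allocation only \emph{raises} the squared-voltage variables. I would make this precise by first passing to the linearized surrogate: by Remark~\ref{re:AlCon}, the upper voltage constraints $W_{kk}\leq \wbr\upsilon_k$ are automatically satisfied and may be dropped, so the only active voltage constraints are the lower bounds $\underline\upsilon_i\leq W_{ii}(\blt\Lambda)$. From the Distflow formula \eqref{eq:Dist}, $W_{kk}^{lin}(\blt\Lambda) = W_{00} - 2\sum_{\ep_{ls}\in\mathcal P(k)}\res_{ls}\sum_{m\in\Node(s)}\sum_j \Lambda_{mj}$, which is a nonincreasing affine function of every coordinate $\Lambda_{mj}$ (all $\res_{ls}\geq 0$). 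Hence $\blt 0\leq \blt\Lambda'\leq\blt\Lambda(\blt z)$ gives $W_{kk}^{lin}(\blt\Lambda') \geq W_{kk}^{lin}(\blt\Lambda(\blt z)) \geq \underline\upsilon_k$, so $\blt\Lambda'$ is feasible for the linearized lower bound. For the AC model one then invokes the comparison inequality $W_{kk}(\blt p,\blt z)\leq W_{kk}^{lin}(\blt p,\blt z)$ (from \cite[Lemma~12]{low14I}, cited in Remark~\ref{re:AlCon})\,---\,but note that for feasibility of \eqref{ROPband} we actually need a \emph{lower} bound on the AC $W_{ii}$, so the cleaner route is to argue directly on \eqref{eq:KVL}: with the ratios $\res_{pl}/\reac_{pl}$ constant, $P_{\Node(k)}$ and $Q_{\Node(k)}$ are each nondecreasing in the consumed powers (the loss terms $L^P,L^Q$ in \eqref{eq:power} are nonnegative and, under the fixed exactness/rank-one structure, are themselves monotone), so by solving the recursion \eqref{eq:KVL} from the feeder outward one sees each $W_{kk}$ is nonincreasing in $\blt\Lambda$; thus lowering $\blt\Lambda$ to $\blt\Lambda'$ preserves $\underline\upsilon_i\leq W_{ii}$, and the psd constraints $W_{ii}W_{kk}-W_{ik}^2\geq0$ (equivalently rank-one exactness) are inherited from the feasible reference point.

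The main obstacle I anticipate is precisely the monotonicity of the AC voltages $W_{kk}$ (and the loss terms $L^P_{ls},L^Q_{ls}$) as functions of $\blt\Lambda$: the Distflow surrogate is transparently affine-monotone, but the exact equations \eqref{eq:KVL} couple $W_{ik}$ off-diagonal entries and the losses depend on $W_{ll}-2W_{ls}+W_{ss}$, so the monotonicity is not syntactically obvious and must be extracted from the tree structure together with the constant-ratio assumption. I would handle this by induction on the tree, pushing power balances from the leaves toward the root and voltages from the root toward the leaves, using that on a tree the power consumed by a subtree depends monotonically on the allocations inside it; the constant $\res/\reac$ ratio is what lets the losses be controlled by the single quantity $\res_{ls}^2+\reac_{ls}^2$ and keeps the relaxation exact (as already invoked via Remark~\ref{re:AlCon} and \cite[Theorem~5]{low14}). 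Once monotonicity of $\blt z\mapsto\blt W$ in $\blt\Lambda$ is in hand, the proposition follows by the coordinatewise comparisons above; the $W_{ii}>0$ assumption guarantees the psd constraints reduce to the scalar inequalities $W_{ii}W_{kk}\geq W_{ik}^2$, which are convex and hence preserved under the feasible-set structure.
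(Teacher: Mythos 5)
Your overall plan---check the box constraints trivially, then show that lowering the aggregate allocation cannot lower the squared voltages, by induction on the tree using the constant $\res/\reac$ assumption---is the same route the paper takes, and your observation that the comparison $W_{kk}\leq W^{lin}_{kk}$ is useless for the \emph{lower} voltage bounds is correct. However, the proposal stops exactly where the real work begins. In \eqref{ROPband} the entries of $\blt W$ are decision variables, not functions of $\blt\Lambda$: the flow equations \eqref{eq:KVL} together with \eqref{eq:power} do not determine $\blt W$ from $\blt\Lambda$, because the off-diagonal entries $W_{pk}$ are free and the loss terms $L^P_{ls},L^Q_{ls}$ depend on them through $W_{ll}-2W_{ls}+W_{ss}$. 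So the assertion ``$W_{kk}$ is nonincreasing in $\blt\Lambda$'' is not even well-posed until you exhibit a witness: feasibility of $\blt\Lambda'$ means constructing some admissible $\blt W'$. The paper's proof does precisely this: it keeps the off-diagonal entries fixed, $W'_{pk}=W_{pk}$ and $W'_{00}=W_{00}$, and defines the diagonals $W'_{kk}$ as the solution of \eqref{eq:KVL} with $\blt\Lambda$ replaced by $\blt\Lambda'$; with this specific choice the positive semidefiniteness constraints follow, once $W'_{kk}\geq W_{kk}$ is known at every node, from $W'_{pp}W'_{kk}\geq W_{pp}W_{kk}\geq W_{pk}^2=(W'_{pk})^2$. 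Your proposal never specifies what to do with the off-diagonals, so neither the losses nor the claimed ``inheritance'' of the psd constraints is pinned down.

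The second gap is the inductive step itself, which you explicitly flag as ``the main obstacle I anticipate'' but do not carry out, and whose actual content is not the loose monotonicity statements you invoke (monotonicity of $P_{\Node(k)},Q_{\Node(k)}$ or of the losses in the consumed powers is not what the paper uses; in the comparison the loss terms involve differences $W'_{ll}-W_{ll}+W'_{ss}-W_{ss}$ that must be tracked exactly, since the diagonals move while the off-diagonals stay fixed). The mechanism is a cancellation: writing $a_{pkls}=\frac{\res_{pk}\res_{ls}+\reac_{pk}\reac_{ls}}{\res^2_{ls}+\reac^2_{ls}}$ and subtracting the two flow equations layer by layer from the leaves upward, the constant-ratio assumption makes every coefficient of the form $\res_{pk}-\res_{ls}a_{pkls}$ (and its nested analogues) vanish identically, so all contributions from proper descendants cancel and one is left with $(1+\sum_{\ep_{ks}\in\Edge(k)}a_{pkks})(W_{kk}-W'_{kk})=\res_{pk}(\Lambda'_k-\Lambda_k)\leq 0$ at every node, i.e.\ $W'_{kk}\geq W_{kk}$. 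Without the witness construction and this explicit cancellation induction, the claimed monotonicity remains an assertion rather than a proof; with them, the lower voltage bounds and then the psd constraints follow as you indicate.
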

Observe that in case the feasible set of \eqref{ROPband} is polyhedral, the conclusion is immediate. The proof of the previous proposition is given in Section~\ref{ch5:proofs of pertupation}. The main idea of the proof is to construct a new solution $(\blt{W}', \blt{\Lambda}')$. Then, using the feasibility of the point $\blt{\Lambda}(\blt{z})$ and induction starting from the leaf nodes, we show that the point $(\blt{W}', \blt{\Lambda}')$ lies in the feasible set of \eqref{ROPband}.
In the sequel, we present the main result of this section, which says that $\blt{\Lambda}(\cdot)$ is a continuous function.
\begin{theorem}[Continuity]\label{Ch5Pr:continuous}
Let $\blt{\Lambda}(\blt{z})$ for $\blt{z}\in (0,\infty)^{I \times J}$ be the unique optimal solution of \eqref{ROPband}.
We have that $\blt{\Lambda}(\blt{z})$ is a continuous function in
$(0,\infty)^{I \times J}$.
\end{theorem}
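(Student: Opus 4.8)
The plan is to prove continuity of $\blt{\Lambda}(\cdot)$ via a standard Berge-type maximum-theorem argument, combined with the uniqueness hypothesis and the structural monotonicity provided by Proposition~\ref{prop:feasible}. Fix $\blt{z}^*\in(0,\infty)^{I\times J}$ and let $\blt{z}^{(n)}\to\blt{z}^*$ with all $\blt{z}^{(n)}\in(0,\infty)^{I\times J}$; I want to show $\blt{\Lambda}(\blt{z}^{(n)})\to\blt{\Lambda}(\blt{z}^*)$. First I would recast \eqref{ROPband} as the maximization of the objective $\Phi(\blt{\Lambda},\blt{z}):=\sum_{i,j} z_{ij}u_{ij}(\Lambda_{ij}/z_{ij})$ over the set-valued feasibility correspondence $\blt{z}\mapsto\mathcal{F}(\blt{z})$ defined by the load constraints $\sum_j\Lambda_{ij}\le M_i$, $0\le\Lambda_{ij}\le z_{ij}c_j^{\max}$, together with \eqref{eq:KVL}, the voltage lower bounds $\underline{\upsilon}_i\le W_{ii}(\blt{\Lambda})$ (recall by Remark~\ref{re:AlCon} the upper bounds are redundant), and the SOCP cone constraints $\blt{W}(\epsilon_{ik})\succeq0$. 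Here I eliminate the $\blt{W}$ variables using \eqref{eq:KVL} and the loss expressions, so that $W_{ii}$ becomes a continuous (indeed, under the constant $\res/\reac$ assumption, more structured) function of $(\blt{\Lambda},\blt{z})$ on a neighborhood of interest, and $\mathcal{F}(\blt{z})$ is a subset of $\R^{I\times J}$.

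The key steps, in order: (i) \emph{Compactness and upper semicontinuity of $\mathcal{F}$.} The load constraints alone confine $\mathcal{F}(\blt{z})$ to the compact box $\prod_{i,j}[0,z_{ij}c_j^{\max}]\cap\{\sum_j\Lambda_{ij}\le M_i\}$; since $W_{ii}(\cdot,\cdot)$ and the cone constraints are continuous in $(\blt{\Lambda},\blt{z})$, the graph of $\mathcal{F}$ is closed, so $\mathcal{F}$ is a compact-valued upper hemicontinuous correspondence. (ii) \emph{Lower semicontinuity of $\mathcal{F}$.} This is where I expect to lean on Proposition~\ref{prop:feasible}: given any $\blt{\Lambda}^*\in\mathcal{F}(\blt{z}^*)$ and $\blt{z}^{(n)}\to\blt{z}^*$, I construct feasible points $\blt{\Lambda}^{(n)}\in\mathcal{F}(\blt{z}^{(n)})$ with $\blt{\Lambda}^{(n)}\to\blt{\Lambda}^*$ by slightly shrinking $\blt{\Lambda}^*$ coordinatewise — precisely, set $\blt{\Lambda}^{(n)}:=\theta_n\,\blt{\Lambda}^*$ for an appropriate $\theta_n\uparrow1$ chosen so the box constraint $\Lambda_{ij}^{(n)}\le z_{ij}^{(n)}c_j^{\max}$ holds; Proposition~\ref{prop:feasible} guarantees that such a downward perturbation stays feasible for the voltage and cone constraints \emph{with the same $\blt{z}$}, and one checks the mild additional perturbation in $\blt{z}$ (which only enters through $z_{ij}c_j^{\max}$ in the box and through the objective) does not destroy this. (If a purely multiplicative shrink is not enough near the voltage boundary because $W_{ii}$ depends on $\blt{z}$ through the loss terms, I would instead first relax to the strictly feasible interior — using $f_{D}$-type strict-positivity-of-load-room arguments or simply noting $\blt{\Lambda}=\blt{0}$ is always feasible and the feasible set is "star-shaped toward $\blt{0}$" by Proposition~\ref{prop:feasible} — and take convex combinations with $\blt{0}$.) (iii) \emph{Apply Berge's maximum theorem.} With $\mathcal{F}$ continuous (both u.h.c.\ and l.h.c.), nonempty- and compact-valued, and $\Phi$ jointly continuous on $(0,\infty)^{I\times J}\times\R^{I\times J}$ (note $u_{ij}$ is continuous on $\R_+$ and $\Lambda_{ij}/z_{ij}$ is continuous since $z_{ij}>0$), Berge's theorem yields that the optimal value is continuous and the $\argmax$ correspondence is u.h.c. (iv) \emph{Upgrade to continuity of the maximizer.} Since \eqref{ROPband} has, by hypothesis, a \emph{unique} optimizer $\blt{\Lambda}(\blt{z})$ for each $\blt{z}\in(0,\infty)^{I\times J}$, an u.h.c.\ correspondence that is single-valued is continuous as a function; concretely, any subsequential limit of $\blt{\Lambda}(\blt{z}^{(n)})$ lies in the $\argmax$ at $\blt{z}^*$ by u.h.c., hence equals $\blt{\Lambda}(\blt{z}^*)$, and since the sequence lives in a fixed compact set this forces $\blt{\Lambda}(\blt{z}^{(n)})\to\blt{\Lambda}(\blt{z}^*)$.

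The main obstacle is step (ii), the lower hemicontinuity of $\mathcal{F}$: one must produce feasible allocations for the perturbed $\blt{z}^{(n)}$ that converge to a prescribed feasible allocation at $\blt{z}^*$, and the voltage constraint $\underline{\upsilon}_i\le W_{ii}(\blt{\Lambda},\blt{z})$ together with the SOCP cone constraints make $\mathcal{F}$ genuinely non-polyhedral, so the usual "the feasible set moves continuously" intuition needs justification. Proposition~\ref{prop:feasible} is exactly the tool that makes the downward-perturbation construction work for the non-polyhedral part, and the constant-$\res/\reac$ assumption is what keeps $W_{ii}(\blt{\Lambda},\blt{z})$ well-behaved (linear, in fact) so that the remaining dependence on $\blt{z}$ is transparent. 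A secondary technical point worth isolating as a lemma is that $\mathcal{F}(\blt{z}^*)$ contains points in the relative interior arbitrarily close to any given feasible point (so the shrink $\theta_n$ can be taken $<1$ and still converge), which again follows from the star-shapedness toward $\blt{0}$ granted by Proposition~\ref{prop:feasible}.
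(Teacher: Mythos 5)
Your proposal is correct and takes essentially the same route as the paper: the paper's proof (following Reed--Zwart) is a hands-on version of your Berge argument, first showing that any limit point of $\blt{\Lambda}(\blt{z}^k)$ is feasible at $\blt{z}$ (your upper-hemicontinuity step, via continuity of the voltage magnitudes) and then using Proposition~\ref{prop:feasible} to produce competitors $\Lambda_{ij}^k=\Lambda_{ij}(\blt{z})\Min c^{max}_j z_{ij}^k$ feasible at $\blt{z}^k$ (your lower-hemicontinuity step, with a coordinatewise truncation in place of your multiplicative shrink), before concluding from uniqueness of the optimizer. The substance and the key lemma coincide; only the packaging (an explicit contradiction argument versus invoking Berge's maximum theorem) differs.
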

The proof of Theorem~\ref{Ch5Pr:continuous} is given in Section~\ref{ch5:proofs of pertupation} and it combines Proposition~\ref{prop:feasible}, the continuity property of the voltages as functions of loads, and arguments from \cite[Lemma~1]{remerova14}.

In Section~\ref{ch5: fluid model}, and more specifically when we prove that the fluid model solution is unique, we need the stronger property that the optimal solution of \eqref{ROPband} is Lipschitz continuous. If we assume the linearized Distflow power model (see Section~\ref{sec:charging control rule}), then the feasible set of \eqref{ROPband} is polyhedral, and hence $\blt{\Lambda}(\cdot)$ is Lipschitz continuous by applying directly \cite[Theorems~3.1 and 3.2]{ReedZwart2014}.
In the case of the AC power flow model, where the feasible set of \eqref{ROPband} is convex,  we need to make an additional assumption that the strict complimentary condition holds for some constraints before we can conclude Lipschitz continuity.
While we have not been able to establish this property without the aforementioned assumption,  we conjecture that $\blt{\Lambda}(\cdot)$ is Lipschitz continuous, and leave this question open.

We now move to the original stochastic network and its fluid model.

\section{Fluid model definition}\label{ch5: fluid model}
In this section, we define and study the properties of a deterministic fluid model, associated with the stochastic model introduced in Section~\ref{Ch5:model description}. All proofs of this section are gathered in Section~\ref{ch5:proofof Unique}.

Define the following classes
$$\mathcal{C}:=\left\{[x,\infty),\ x\in \R_+ \right\}$$ and $$\mathcal{C'}:=\left\{[x,\infty)\times[y,\infty),\ x,y\in \R_+ \right\}.$$
Further, for any
$A\in \mathcal{C}$ and  $s\in \R$, define $A+s:=\left\{s+y,\ y\in A\right\}$ and for any $A\in \mathcal{C'}$ and $(s,t)\in \R^2$,
define $A'+(s,t):=\left\{s+[x,\infty)\times t+[y,\infty),\ x,y\in A'\right\}$.

\begin{definition}[Fluid model]\label{def:FMSC}
Let the initial data for the fluid model be given by
$$\left(\wbr{\blt{E}}(\cdot),
\wbr{\blt{\Q}}(0), \wbr{\blt{\Z}}(0)\right) \in
 C(\R_+,\R_+)\times\mathcal{M}_F^{I\times J}\times\mathcal{M}_F^{I\times J},$$ where
$\wbr{E}_{ij}(t)=\int_{0}^{t}\lambda_{ij}(s)ds$.
We say that the vector
$$\left(\blt{\wbr{\Q}}(\cdot),
\blt{\wbr{\Z}}(\cdot),\blt{\wbr Q}(\cdot), \blt{\wbr Z}(\cdot)\right)
\in
 C(\R_+,\mathcal{M}_F^{I\times J})^2\times C(\R_+,R_+^{I\times J})^2$$
is a fluid model solution
if $\wbr Q_{ij}(t)=\langle 1,\wbr{\Q}_{ij}(t)\rangle$,
$\wbr Z_{ij}(t)=\langle 1,\wbr{\Z}_{ij}(t)\rangle$,
and if there exist nondecreasing nonnegative continuous functions $\wbr{R}_{i}(\cdot)$,
$\wbr{R}_{ij}(\cdot)$ such that
\begin{align*}
\wbr{R}_{i}(t)=\int_{0}^{t}
\ind{\wbr Q_{i}(s) =K_i}d\wbr{R}_{i}(s)\hspace{0.2cm}
\text{and} \hspace{0.2cm}
\wbr{R}_{ij}(t)=\int_{0}^{t} \frac{\lambda_{ij}(s)}{\sum_{h=1}^{J}\lambda_{ih}(s)}d\wbr{R}_{i}(s).
\end{align*}
Furthermore,  for any $t\geq 0$, $\mathcal{A} \in \mathcal{C}$, and $\mathcal{A'} \in \mathcal{C'}$ the following relations hold
\begin{equation}\label{ch5eq:FLQ}
\begin{split}
\wbr{\Q}_{ij}(t)(A)= \wbr Q_{ij}(0) \Prob{ D_{ij}^0\in A+t }+
\int_{0}^{t}  \Prob{ D_{ij} \in A+(t-s) } d\wbr{E}_{ij}(s)\\
-\int_{0}^{t}  \Prob{ D_{ij} \in A+(t-s) } d\wbr{R}_{ij}(s),
\end{split}
\end{equation}
\begin{equation*}
\begin{split}
\wbr{\Z}_{ij}(t)(A')= \wbr Z_{ij}(0)& \Prob{ (B_{ij}^0,D_{ij}^0) \in A'+(S_{ij}(\blt{z},0,t),t) }\\
&+
\int_{0}^{t}  \Prob{ (B_{ij},D_{ij}) \in A'+(S_{ij}(\wbr{\blt{Z}},s,t),t-s) } d\wbr{E}_{ij}(s)\\
&-\int_{0}^{t}  \Prob{(B_{ij},D_{ij}) \in A'+(S_{ij}(\wbr{\blt{Z}},s,t),t-s) } d\wbr{R}_{ij}(s).
\end{split}
\end{equation*}
Moreover, the functions
$\wbr Q_{ij}(\cdot)=\langle 1,\wbr{\Q}_{ij}(\cdot)\rangle=\wbr{\Q}_{ij}(\cdot)(\R_+)$ and $\wbr Z_{ij}(\cdot)=\langle 1,\wbr{\Z}_{ij}(\cdot)\rangle=\wbr{\Z}_{ij}(\cdot)(\R_+^2)$ are given by
\begin{align}\label{eq:qnet}
\begin{split}
\wbr Q_{ij}(t)=
\wbr Q_{ij}(0) \Prob{D_{ij}^0 \geq t }+
\int_{0}^{t}  \Prob{D_{ij} \geq t-s } d\wbr{E}_{ij}(s)\\
-
\int_{0}^{t}  \Prob{ D_{ij} \geq t-s } d\wbr{R}_{ij}(s)
\end{split}
\end{align}
and
\begin{equation*}
\begin{split}
\wbr Z_{ij}(t)=
\wbr Z_{ij}(0)& \Prob{B_{ij}^0 \geq S_{ij}(\wbr{\blt{Z}},0,t)  ,D_{ij}^0 \geq t }\\
&+
\int_{0}^{t}  \Prob{B_{ij} \geq S_{ij}(\wbr{\blt{Z}},s,t)  ,D_{ij} \geq t-s } d\wbr{E}_{ij}(s)\\
&-
\int_{0}^{t}  \Prob{ B_{ij} \geq S_{ij}(\wbr{\blt{Z}},s,t)  ,D \geq t-s } d\wbr{R}_{ij}(s).
\end{split}
\end{equation*}
\end{definition}
We call the vectors $(\blt{\wbr{\Q}}(\cdot)$, $\blt{\wbr{\Z}}(\cdot))$ and $(\blt{\wbr Q}(\cdot)$, $\blt{\wbr Z}(\cdot))$ the
\emph{measure-valued fluid model solution} and the
 \emph{numeric fluid model solution}, respectively.

The fluid model equations, though still rather complicated, have an intuitive meaning. For instance, the term
$\Prob{B_{ij} \geq S_{ij}(\wbr{\blt{Z}},s,t)  ,D_{ij} \geq t-s }$
resembles the fraction of EVs of type-$j$ admitted to the system at time $s$
at node $i$ that are still in the system at time $t$. For this to
happen, their deadline needs to exceed $t-s$ and their service
requirement needs to be bigger than the service allocated,
which is $S_{ij}(\wbr{\blt{Z}},s,t)$. In addition, $\wbr{R}_{ij}(t)$ represents the lost fluid of type-$j$ EVs at node $i$ due to a full system until time $t\geq0$.

\begin{remark}\label{ch5:remark1}
Note that the sets $\mathcal{C}$ and $\mathcal{C'}$ generate the Borel $\sigma-$algebra of $\R$ and $\R^2$, respectively. Then, by  Dynkin's $\pi$-$\lambda$ theorem, the fluid model solutions hold for any Borel set. See Section~2.3 in \cite{gromoll2008} for more details. Moreover, by \cite[Remark~3.2]{remerova14}, fluid model solutions are invariant with respect to time shifts.
\end{remark}

We next show that  the total number of EVs in the fluid model can be rewritten in a familiar form for queueing systems and the departure process in the fluid model can be written as a function of the total number of EVs.
\begin{proposition}\label{ch5:analysisQ}
We have that for any $i\geq 1$ and $j\geq 1$,
\begin{equation}\label{eq:Departures}
\wbr{Q}_{ij}(t)=
 \wbr{Q}_{ij}(0) +\wbr{E}_{ij}(t)-\wbr{R}_{ij}(t) -\wbr{D}_{ij}(t),
\end{equation}
where $\wbr{D}_{ij}(t)$ represents the amount of fluid that departs from the system in time interval $[0,t)$, and
\begin{equation}\label{eq:depE}
\wbr{D}_{ij}(t)=\int_{0}^{t}\lim_{\epsilon\rightarrow 0}
\frac{\wbr{Q}_{ij}(s)-\wbr{\Q}_{ij}(s)\left([\epsilon,\infty)\right)}
{\epsilon}ds <  \infty.
\end{equation}
\end{proposition}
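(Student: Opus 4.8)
The plan is to prove the two displayed identities separately. For \eqref{eq:Departures}, I would start from the explicit formula \eqref{eq:qnet} for $\wbr Q_{ij}(t)$ and rewrite each probability as one minus a distribution function: write $\Prob{D_{ij}^0\geq t}=1-\Prob{D_{ij}^0<t}$ and similarly inside the two integrals. The constant-1 parts reassemble exactly into $\wbr Q_{ij}(0)+\wbr E_{ij}(t)-\wbr R_{ij}(t)$, using $\wbr E_{ij}(t)=\int_0^t\lambda_{ij}(s)\,ds=\int_0^t d\wbr E_{ij}(s)$. This forces the definition
\[
\wbr D_{ij}(t):=\wbr Q_{ij}(0)\Prob{D_{ij}^0<t}+\int_0^t\Prob{D_{ij}<t-s}\,d\wbr E_{ij}(s)-\int_0^t\Prob{D_{ij}<t-s}\,d\wbr R_{ij}(s),
\]
i.e. $\wbr D_{ij}(t)$ is the fluid mass whose residual deadline has already hit zero by time $t$. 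Monotonicity and nonnegativity of $\wbr D_{ij}(\cdot)$ would follow because each summand is nondecreasing in $t$ (the first two obviously; the subtracted term needs a small argument using that $\wbr R_{ij}$ is supported on times where the lot is full, combined with the balance identity $\wbr Q_{ij}(t)\geq 0$, or more simply by rewriting $\wbr D_{ij}(t)=\wbr Q_{ij}(0)+\wbr E_{ij}(t)-\wbr R_{ij}(t)-\wbr Q_{ij}(t)$ and invoking that $\wbr Q_{ij}(t)\le \wbr Q_{ij}(0)+\wbr E_{ij}(t)-\wbr R_{ij}(t)$, which is itself \eqref{eq:Departures}). Finiteness is immediate since $\wbr D_{ij}(t)\leq \wbr Q_{ij}(0)+\wbr E_{ij}(t)<\infty$.

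For \eqref{eq:depE}, the idea is to recover the departure rate as a boundary flux at the origin of the residual-deadline axis. From \eqref{ch5eq:FLQ} with $A=[0,\infty)$ versus $A=[\epsilon,\infty)$, one has
\[
\wbr Q_{ij}(t)-\wbr\Q_{ij}(t)\big([\epsilon,\infty)\big)
=\wbr Q_{ij}(0)\Prob{t\le D_{ij}^0<t+\epsilon}
+\int_0^t\Prob{t-s\le D_{ij}<t-s+\epsilon}\,d\wbr E_{ij}(s)
-\int_0^t\Prob{t-s\le D_{ij}<t-s+\epsilon}\,d\wbr R_{ij}(s),
\]
which is exactly the increment of $\wbr D_{ij}$ over $[t,t+\epsilon)$ \emph{if} one has the translation identity $\wbr D_{ij}(t+\epsilon)-\wbr D_{ij}(t)=\wbr Q_{ij}(0)\Prob{t\le D_{ij}^0<t+\epsilon}+\cdots$; checking this from the formula for $\wbr D_{ij}$ above is a one-line computation (it is just $\Prob{D<t+\epsilon}-\Prob{D<t}$ etc.), modulo the slight mismatch that in \eqref{ch5eq:FLQ} the integrals run to $t$ while in $\wbr D_{ij}(t+\epsilon)$ they run to $t+\epsilon$ — that discrepancy is an $O(\epsilon)$ error bounded by $(\lambda_{ij}\cdot\epsilon)$ using the density bound on $E_{ij}$ and boundedness of $\wbr R_{ij}'$, so it vanishes after dividing by $\epsilon$. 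Hence $\big(\wbr Q_{ij}(s)-\wbr\Q_{ij}(s)([\epsilon,\infty))\big)/\epsilon$ converges as $\epsilon\to0$ to the density of $\wbr D_{ij}$ at $s$, provided that density exists; then integrating in $s$ and applying the fundamental theorem of calculus (or dominated convergence for the $ds$-integral, with the $\epsilon$-difference quotients dominated by an integrable function) gives $\int_0^t\lim_{\epsilon\to0}(\cdots)\,ds=\wbr D_{ij}(t)$.

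The main obstacle is the interchange of limit and integral in \eqref{eq:depE} together with the existence of the pointwise limit of the difference quotient. The pointwise limit exists for a.e.\ $s$ because $\wbr D_{ij}$ is differentiable a.e.\ as a monotone function, but to get the limit to exist \emph{everywhere} (so that the integrand in \eqref{eq:depE} is well-defined as written) one needs the absolute continuity of the laws: the assumed existence of the parking-time density $f_{D_{ij}}$ makes $s\mapsto\Prob{D_{ij}<t-s}$ absolutely continuous, the term involving $D_{ij}^0$ is handled by assuming (as the fluid initial data implicitly requires) a density for $D_{ij}^0$ or by noting its contribution to the limit is $\wbr Q_{ij}(0)f_{D_{ij}^0}(t)$, and the $d\wbr R_{ij}$ term is absolutely continuous because $d\wbr R_{ij}(s)=\frac{\lambda_{ij}(s)}{\sum_h\lambda_{ih}(s)}d\wbr R_i(s)$ with $\wbr R_i$ nondecreasing, so $\Prob{t-s\le D_{ij}<t-s+\epsilon}/\epsilon\to f_{D_{ij}}(t-s)$ boundedly. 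For domination, one bounds the difference quotient uniformly in $\epsilon$ by $\sup f_{D_{ij}}+\sup f_{D_{ij}^0}$ plus a constant times the total variation of $\wbr R_{ij}$ on $[0,t]$, all finite, so bounded convergence applies on $[0,t]$ and yields \eqref{eq:depE}. I would organize the writeup as: (i) derive the closed form of $\wbr D_{ij}$ and establish \eqref{eq:Departures} with its monotonicity/finiteness; (ii) compute the increment $\wbr D_{ij}(t+\epsilon)-\wbr D_{ij}(t)$ and compare with $\wbr Q_{ij}(t)-\wbr\Q_{ij}(t)([\epsilon,\infty))$, controlling the $O(\epsilon)$ mismatch; (iii) pass to the limit using the density assumptions and bounded convergence to obtain \eqref{eq:depE}.
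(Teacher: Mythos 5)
Your part (i) is exactly the paper's argument: rewrite \eqref{eq:qnet} via $\Prob{D_{ij}<t}+\Prob{D_{ij}\geq t}=1$, which forces the same closed form $\wbr{D}_{ij}(t)=\wbr{Q}_{ij}(0)\Prob{D^0_{ij}<t}+\int_0^t\Prob{D_{ij}<t-s}\,d\wbr{E}_{ij}(s)-\int_0^t\Prob{D_{ij}<t-s}\,d\wbr{R}_{ij}(s)$ and gives \eqref{eq:Departures} and finiteness at once. You also correctly flag that a density for $D^0_{ij}$ is implicitly needed; the paper indeed uses $f_{D^0_{ij}}$ without listing it among the assumptions. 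Where you diverge is the passage to \eqref{eq:depE}. The paper never compares $\wbr{Q}_{ij}(t)-\wbr{\Q}_{ij}(t)([\epsilon,\infty))$ with the increment $\wbr{D}_{ij}(t+\epsilon)-\wbr{D}_{ij}(t)$: it expands that difference directly from \eqref{ch5eq:FLQ} at the single time $t$, getting $\wbr{Q}_{ij}(0)\Prob{t<D^0_{ij}<t+\epsilon}+\int_0^t\Prob{t-s<D_{ij}<t-s+\epsilon}\,d\wbr{E}_{ij}(s)-\int_0^t\Prob{t-s<D_{ij}<t-s+\epsilon}\,d\wbr{R}_{ij}(s)$, divides by $\epsilon$, identifies the limit with $\wbr{Q}_{ij}(0)f_{D^0_{ij}}(t)+\int_0^t f_{D_{ij}}(t-s)\,d\wbr{E}_{ij}(s)-\int_0^t f_{D_{ij}}(t-s)\,d\wbr{R}_{ij}(s)$, and then integrates this in $t$ term by term, interchanging the order of integration by Tonelli (each term is a nonnegative integrand), which returns exactly the closed form of $\wbr{D}_{ij}(t)$. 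No domination and no fundamental theorem of calculus are needed, and no boundary mismatch terms appear.

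The genuine weak point in your version is precisely the step you identify as the main obstacle. Your dominated/bounded-convergence justification invokes $\sup f_{D_{ij}}$, $\sup f_{D^0_{ij}}$, boundedness of $\wbr{R}_{ij}'$, and an $O(\epsilon)$ mismatch bound of the form $\lambda_{ij}\epsilon$; none of these is available under the paper's hypotheses, which assume only that the densities exist with $f_{D_{ij}}(0)>0$, that $\lambda_{ij}$ is integrable (not bounded), and that $\wbr{R}_{ij}$ is continuous and nondecreasing (no a priori absolute continuity or Lipschitz bound). Moreover, deducing $\int_0^t\wbr{D}_{ij}'(s)\,ds=\wbr{D}_{ij}(t)$ from monotonicity alone only gives an inequality, so you genuinely need absolute continuity of $\wbr{D}_{ij}$, not just a.e.\ differentiability. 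Both issues are fixable without new assumptions: either adopt the paper's order of operations (compute the pointwise limit explicitly in terms of the densities and then integrate with Tonelli), or establish absolute continuity of $\wbr{D}_{ij}$ directly from its closed form, noting it is a mixture of absolutely continuous distribution functions against the finite measures $d\wbr{E}_{ij}$ and $d\wbr{R}_{ij}$; and avoid the increment comparison altogether, since the extra terms $\int_t^{t+\epsilon}\Prob{D_{ij}<t+\epsilon-s}\,d\wbr{E}_{ij}(s)$ and the analogous $d\wbr{R}_{ij}$ term require a Lebesgue-point argument rather than the uniform bound you propose.
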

The last proposition uses the assumption of existence of the density of the parking times in order to ensure that the limit in \eqref{eq:depE} exists, and this is  the only point where we need this assumption. It follows from Proposition~\ref{ch5:analysisQ} that the total number of EVs can be written with the help of a one-dimensional reflection mapping. This result will be helpful, when we show uniqueness of the fluid model solution in Theorem~\ref{Ch5thm:Uniqueness}.
The novelty in our setting  is \eqref{eq:depE}, where an intuitive explanation is as follows.  The difference $\wbr{Q}_{ij}(s)-\wbr{\Q}_{ij}(s)\left([\epsilon,\infty)\right)$ represents the amount of fluid of type-$j$ EVs at node $i$ for which its residual parking time lies in the interval $(0,\epsilon)$. It is natural now to expect that by dividing the last difference  by $\epsilon>0$ and by allowing $\epsilon$ to be arbitrary small, the quantity $\lim\limits_{\epsilon\rightarrow 0}
\frac{\wbr{Q}_{ij}(s)-\wbr{\Q}_{ij}(s)\left([\epsilon,\infty)\right)}
{\epsilon}$ represents the departure rate of an EV from the parking lot at time $s>0$. Observe that \eqref{eq:depE} corresponds to \cite[Equation~3.2]{kang2015}. However, in the latter, the authors use different test functions to define the fluid model and they write the departure rate in terms of the hazard rate function.

Before we continue our analysis, we present an example in case of a Markovian model.
\begin{example}[Markovian model]
Consider a Markovian model (i.e., Poisson arrival process with constant arrival rate and exponential parking times), and  take $J=1$ and $\wbr{Q}_i(0)=0$ for convenience.
We shall show that the departure process given in \eqref{eq:depE} can be written in the well-known form for a Markovian model \cite{pang2007martingale}, namely
\begin{equation}\label{ch5:markov}
\wbr{D}_{i}(t)=\int_{0}^{t}\lim_{\epsilon\rightarrow 0}
\frac{\wbr{Q}_{i}(s)-\wbr{\Q}_{i}(s)\left([\epsilon,\infty)\right)}
{\epsilon}ds
=\frac{1}{\E{D_{i}}}\int_{0}^{t}
\wbr{Q}_i(s)ds.
\end{equation}
To show \eqref{ch5:markov}, use 
\eqref{eq:qnet} and $A=[\epsilon,\infty)$ in \eqref{ch5eq:FLQ} to get
\begin{equation*}
  \wbr{\Q}_{i}(t)([\epsilon,\infty))=
    \wbr{Q}_{i}(t) e^{-\epsilon/\E{D_{i}}}.
\end{equation*}
Observing that
$\lim\limits_{\epsilon\rightarrow 0} \frac{1-e^{-\epsilon/\E{D_{i}}}}{\epsilon}=\frac{1}{\E{D_{i}}}$, we derive
\begin{align*}
\int_{0}^{t}\lim_{\epsilon\rightarrow 0}
\frac{\wbr{Q}_{i}(s)-\wbr{\Q}_{i}(s)(\left[\epsilon,\infty\right))}
{\epsilon}ds&= \lim\limits_{\epsilon\rightarrow 0} \frac{1-e^{-\epsilon/\E{D_{i}}}}{\epsilon}
\int_{0}^{t} \wbr{Q}_{i}(s)ds\\
&=\frac{1}{\E{D_{i}}}\int_{0}^{t} \wbr{Q}_{i}(s)ds.
\end{align*}
\end{example}

An important question is when a solution of the fluid model equations (if it exists) is unique. The next theorem answer this question.
\begin{theorem}\label{Ch5thm:Uniqueness}
Assume that\ $\wbr{Q}_{ij}(0)>0$\ if\ $\wbr{Q}_{i}(0)=K_i$ and consider the linearized Distflow power model~\ref{eq:Dist}.
Suppose that
 $\wbr{\blt{Z}}(0)=\blt{0}$ or that
$\wbr{\blt{Z}}(0)\in (0,\infty)^{I\times J}$ and the first projection of $\wbr{\blt{\Z}}(0)$ is Lipschitz continuous, i.e., there exists $L>0$ such that for any $i,j\geq 1$, $x<x'$, and $y>0$,
\begin{equation*}
\wbr{\Z}_{ij}(0)\left(
 [x,x']\times [y,\infty)
 \right) \leq
 L(x'-x).
\end{equation*}
Then there exists a unique solution of the fluid model equations.
\end{theorem}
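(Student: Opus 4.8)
\textbf{Proof proposal for Theorem~\ref{Ch5thm:Uniqueness}.}

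The plan is to decouple the system into a ``$Q$-layer'' and a ``$Z$-layer'' and treat them in sequence. First I would observe that, by Proposition~\ref{ch5:analysisQ}, the vector $\wbr{\blt Q}(\cdot)$ together with the loss terms $\wbr R_{ij}(\cdot)$ solves a one-dimensional Skorokhod reflection problem at each node: combining \eqref{eq:Departures}, \eqref{eq:depE} and the complementarity condition $\wbr R_i(t)=\int_0^t \ind{\wbr Q_i(s)=K_i}\,d\wbr R_i(s)$, one gets a closed autonomous system for $(\wbr{\blt Q},\wbr{\blt R})$ that does not involve $\wbr{\blt Z}$ at all. Here the hypothesis that $\wbr Q_{ij}(0)>0$ whenever $\wbr Q_i(0)=K_i$ is what makes the initial condition consistent with the reflection constraint, and the existence of the parking-time density (used in Proposition~\ref{ch5:analysisQ}) guarantees the departure integrand is well defined. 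I would then argue that the departure rate $s\mapsto \lim_{\epsilon\to0}\epsilon^{-1}(\wbr Q_{ij}(s)-\wbr{\Q}_{ij}(s)([\epsilon,\infty)))$ depends on the past of $\wbr{\blt Q}$ in a Lipschitz fashion through \eqref{ch5eq:FLQ} — since $\wbr{\Q}_{ij}(t)(\cdot)$ is an explicit linear functional of $\wbr Q_{ij}(0)$, $\wbr E_{ij}$ and $\wbr R_{ij}$ with the (bounded, since its density exists near $0$) kernel $\Prob{D_{ij}\in\cdot}$ — so that a Gronwall/contraction argument on a small interval $[0,\delta]$, followed by the standard patching over $[0,\infty)$ using the time-shift invariance noted in Remark~\ref{ch5:remark1}, yields a unique $(\wbr{\blt Q},\wbr{\blt R})$, hence unique $\wbr R_{ij}$ and unique $\wbr{\blt{\mathcal Q}}$.

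With $\wbr{\blt R}$ now fixed, the second step is to solve for $\wbr{\blt Z}$ and $\wbr{\blt{\mathcal Z}}$. The key point is that the cumulative allocation $S_{ij}(\wbr{\blt Z},s,t)=\int_s^t p_{ij}(\wbr{\blt Z}(u))\,du$ is the only place $\wbr{\blt Z}$ feeds back into its own equation, and under the linearized Distflow model~\eqref{eq:Dist} the feasible set of \eqref{ROPband} is polyhedral, so by the remark following Theorem~\ref{Ch5Pr:continuous} (invoking \cite[Theorems~3.1 and 3.2]{ReedZwart2014}) the bandwidth allocation $\blt\Lambda(\cdot)$, and hence $\blt p(\cdot)$ away from the boundary, is Lipschitz continuous. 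I would set up the map $\Phi$ sending a candidate trajectory $\wbr{\blt Z}(\cdot)$ to the right-hand side of the $\wbr Z_{ij}$-equation, show $\Phi$ is a contraction on $C([0,\delta],\R_+^{I\times J})$ for $\delta$ small: the difference of two outputs is controlled by $\sup_{u\le t}\|S_{ij}(\wbr{\blt Z}^1,\cdot,\cdot)-S_{ij}(\wbr{\blt Z}^2,\cdot,\cdot)\|$ times a Lipschitz constant coming from the density/regularity of $(B_{ij},D_{ij})$, and $S$ in turn is Lipschitz in the trajectory via the Lipschitz property of $\blt p$. The two cases of the hypothesis on $\wbr{\blt Z}(0)$ are handled separately: if $\wbr{\blt Z}(0)=\blt 0$ the initial layer of fluid is empty and $\wbr{\blt Z}(t)$ is built purely from arrivals, for which the convolution kernel $\Prob{(B_{ij},D_{ij})\in\cdot}$ provides the needed smoothing in the first coordinate; if $\wbr{\blt Z}(0)\in(0,\infty)^{I\times J}$ the assumed Lipschitz bound on the first projection of $\wbr{\mathcal Z}_{ij}(0)$ plays exactly the same role for the initial term, ensuring $\wbr Z_{ij}(t)-\wbr{\mathcal Z}_{ij}(t)([\epsilon,\infty)\times\R_+)$ and the analogous differences are Lipschitz in the allocation. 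Once uniqueness holds on $[0,\delta]$, iterate using time-shift invariance; note the interval length $\delta$ can be taken uniform because the relevant Lipschitz constants do not blow up (the total fluid mass is bounded by $\wbr Q_{ij}(0)+\wbr E_{ij}(t)$), so a standard concatenation argument extends uniqueness to all of $\R_+$.

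I expect the main obstacle to be the $Z$-layer contraction near the boundary of the polyhedral feasible set, i.e., handling the fact that $\blt p(\wbr{\blt Z}(u))$ is only defined via $\Lambda_{ij}/z_{ij}$ and becomes delicate when some $\wbr Z_{ij}(u)$ approaches $0$. The clean way around this is to work at the level of the aggregate allocation $\blt\Lambda$ (which \emph{is} globally Lipschitz by \cite[Theorems~3.1 and 3.2]{ReedZwart2014} on the polyhedron) rather than the per-vehicle rate $\blt p$, rewriting $S_{ij}(\wbr{\blt Z},s,t)=\int_s^t \Lambda_{ij}(\wbr{\blt Z}(u))/\wbr Z_{ij}(u)\,du$ and showing that the quantity actually entering the fluid equations, namely the fraction $\Prob{B_{ij}\ge S_{ij}(\cdots)}$ weighted against the mass, can be expressed so that the singular $1/\wbr Z_{ij}$ cancels — this mirrors the treatment in \cite{remerova14} and \cite{ReedZwart2014}. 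A secondary technical point is verifying that $\wbr{\mathcal Z}_{ij}(t)$ retains a Lipschitz first projection for $t>0$ (needed to re-run the argument after each time shift), which follows by propagating the Lipschitz bound through the explicit formula for $\wbr{\mathcal Z}_{ij}(t)(A')$ using that convolution with a bounded-density law preserves, and the reflection/arrival terms only improve, Lipschitz regularity.
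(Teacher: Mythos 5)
Your overall architecture matches the paper's: first the $(\wbr{\blt Q},\wbr{\blt R})$ layer, closed in itself, handled through the one-dimensional reflection map plus a Lipschitz/fixed-point argument (the paper phrases your Gronwall step as the functional equation $a=\Psi(B_i(a))-B_i(a)$ and invokes \cite[Lemma~3]{kang2015}); then the $\wbr{\blt Z}$ layer using Lipschitz continuity of $\blt\Lambda(\cdot)$ under the linearized Distflow model and the Lipschitz first projection of $\wbr{\Z}_{ij}(0)$, with time-shift invariance for patching. The genuine gap is in the $Z$-layer, exactly at the point you flag as the main obstacle: you never establish that fluid solutions are strictly positive, and your proposed workaround --- rewriting the equations through $\blt\Lambda$ so that ``the singular $1/\wbr Z_{ij}$ cancels'' --- is not substantiated and is not how the difficulty is resolved. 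No such cancellation is available: the cumulative allocation $S_{ij}(\wbr{\blt Z},s,t)$ genuinely involves $p_{ij}=\Lambda_{ij}/\wbr Z_{ij}$, and propagating the Lipschitz bound on the first projection of $\wbr{\Z}_{ij}(t)$ (needed to re-run the argument after each shift, as you note) produces, after the change of variables $v=S_{ij}(\wbr{\blt Z},s,t)$, a constant of order $\|\blt{\eta}\|\sup_{0\le s\le t}1/\wbr Z_{ij}(s)$. What makes this finite is a separate positivity result (Proposition~\ref{ch5prop:positiveZ} in the paper): $\inf_{t\ge\epsilon}\min_{i,j}\wbr Z_{ij}(t)>0$, proved from $f_{D_{ij}}(0)>0$, the strictly positive arrival rates, and --- crucially --- the hypothesis that $\wbr Q_{ij}(0)>0$ whenever $\wbr Q_i(0)=K_i$, which guarantees a strictly positive departure (hence admission) rate even when a lot starts full. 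In your write-up that hypothesis is misassigned (``consistency with the reflection constraint''), and the role of the parking-time density near $0$ disappears. A second ingredient you would also need is the observation that for small $\blt z$ the load and voltage constraints are inactive, so $p_{ij}(\blt z)=c_j^{max}$; this boundary behavior, together with positivity, is what allows \cite[Theorem~1]{remerova14} (or your contraction) to be applied. Without these two facts your contraction estimate does not close near $\wbr Z_{ij}=0$.

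A minor further point: no density is assumed for $B_{ij}$, so the ``smoothing in the first coordinate from the convolution kernel $\Prob{(B_{ij},D_{ij})\in\cdot}$'' you invoke in the case $\wbr{\blt Z}(0)=\blt 0$ is not available. The regularity of the arrival term in the $B$-direction comes from the Lipschitz continuity of $\wbr E_{ij}$ combined with the time change induced by the allocation rate, which again requires the lower bound on $\wbr{\blt Z}$ discussed above; once Proposition~\ref{ch5prop:positiveZ} and the small-$\blt z$ behavior of $\blt p$ are in place, your two-step scheme does reduce to the paper's proof.
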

The proof of Theorem~\ref{Ch5thm:Uniqueness} is given in Section~\ref{ch5:proofof Unique} and the main steps of the proof are as follows.
\begin{enumerate}
\item The first step is to show that each pair $\left(\wbr{Q}_{i}(\cdot),\wbr{R}_{i}(\cdot)\right)$
      satisfies a one-dimensional reflection mapping and (each pair) is unique.
\item Second, we show that $\wbr{Z}_{ij}(t)>0$ for any $i,j\geq 1$ and $t>0$.
\item Last, we prove that $(\wbr{\blt{\Z}}(\cdot),\wbr{\blt{Z}}(\cdot))$
is also unique using arguments from \cite{remerova14}.
\end{enumerate}

Having defined a fluid model and studied its properties, the next main step is to show that the fluid model arises as a weak limit of the original stochastic model under an appropriate scaling. This is the topic of the next section.

\section{Fluid limit theorem}\label{Ch5: Fluid approximation}
In this section, we study the asymptotic behavior of the stochastic network described in Section~\ref{Ch5:model description}. Consider a family of systems indexed by $n\in \mathbb{N}$, where $n$ tends to infinity, with the same basic structure as that of the system described in Section~\ref{Ch5:model description}.
To indicate the position of the system in the sequence of systems, a superscript $n$ will be appended to the system parameters and processes.

First, we introduce our asymptotic regime. We assume that the scaled capacity at node $i$ is given by $M_i^n=n M$, the scaled number of EV chargers at node $i$ is $K_i^n=n K$, and the scaled resistance and reactance on line $\epsilon_{pk}$ are given by $\res_{pk}^n= \res_{pk}/n$ and $\reac_{pk}^n=\reac_{pk}/n$.
Note that in our setting we need to scale the physical parameters of the system in contrast to the typical scalings in stochastic networks that arise in
communication networks.
We summarize below the assumptions we make in this section.

\textbf{Assumptions:}
\begin{enumerate}
  \item The scaled parameters are given by $K_i^n=n K$, $M_i^n=n M$, $\res_{pk}^n= \res_{pk}/n$, and $\reac_{pk}^n=\reac_{pk}/n$.
  \item The external arrival process satisfies $\frac{E_{ij}^n(\cdot)}{n}\overset{d}\rightarrow \wbr{E}_{ij}(\cdot)$, with $\wbr{E}_{ij}(t)=\int_{0}^{t} \lambda_{ij}(s)ds$.
  \item The limit of the external arrival process is Lipschitz continuous; i.e., there exists $\eta_{ij}>0$ such that
       $|\wbr{E}_{ij}(t)-\wbr{E}_{ij}(s)|\leq \eta_{ij}|t-s| $, for $t,s \geq 0$.
  \item The scaled initial configurations converge to
             random vectors of finite measures, $\wbr{\Q}_{ij}^n(0)\overset{d}\rightarrow \wbr{\Q}_{ij}(0)$ and
$\wbr{\Z}_{ij}^n(0)\overset{d}\rightarrow \wbr{\Z}_{ij}(0)$ as $n\rightarrow \infty$.
  \item For any $i,j\geq 1$, $\wbr{\Q}_{ij}(0)(\R_+)$ and the projections
$\wbr{\Z}_{ij}(0)(\cdot \times \R_+)$ and
$\wbr{\Z}_{ij}(0)(\R_+ \times \cdot)$ are almost surely free of atoms.
\end{enumerate}

Having introduced our scaling regime, we now move to the fluid-scaled state descriptor. The fluid-scaled measure-valued processes are given by
$\left(\blt{\wbr{\Q}}^n(\cdot),
\blt{\wbr{\Z}}^n(\cdot)\right):=
\left(\frac{\blt{\Q}^n(\cdot)}{n},
\frac{\blt{\Z}^n(\cdot)}{n}\right)$
and the fluid-scaled counting processes are given by
$\left(\blt{\wbr{Q}}^n(\cdot),
\blt{\wbr{Z}}^n(\cdot)\right):=
\left(\frac{\blt{Q}^n(\cdot)}{n},
\frac{\blt{Z}^n(\cdot)}{n}\right)$.
Moreover, our fluid scaling leads to the following relation $\blt{p}^n(\blt z)=\blt{p}(\frac{\blt z}{n})$. To see the latter, observe that under our scaling the feasible set of \eqref{ROP} can be written as follows
\begin{equation*}
\mathfrak{F}^n(\blt z):=
\left\{
\begin{split}
&\sum_{j=1}^{J} \frac{z_{ij}}{n}p_{ij} \leq  M_i,\
0 \leq p_{ij} \leq c^{max}_j,\
 W_{ii}\geq\underline{ \upsilon}_i,\
W_{pp}W_{kk}-W_{pk}^2\geq 0,\\
&W_{pk}-W_{kk}-\res_{pk}
\sum_{l\in \Node(k)}\sum_{j=1}^{J}
\frac{z_{lj}}{n} p_{lj} \\
&\qquad+
\sum_{\substack{l\in \Node(k)\\ \epsilon_{ls}\in \Edge(k)}}
\left(
(W_{ll}-2W_{ls}+W_{ss})\frac
{\res_{pk}\res_{ls}+\reac_{pk}\reac_{ls}}{\res^2_{ls}+\reac^2_{ls}}
\right)
=0.
\end{split}
\right\}.
\end{equation*}
It is clear now that $\mathfrak{F}^n(\blt z)=\mathfrak{F}(\frac{\blt z}{n})$, which leads to
\begin{align*}
\blt{p}^n(\blt z)&=
\argmax\limits_{(\blt{p},\blt W)\in \mathfrak{F}^n(\blt z)}\
\sum_{i=1}^{I} \sum_{j=1}^{J} z_{ij} u_{ij}(p_{ij})\\
&=
\argmax\limits_{(\blt{p},\blt W)\in \mathfrak{F}(\frac{\blt z}{n})}\
\sum_{i=1}^{I} \sum_{j=1}^{J} \frac{z_{ij}}{n} u_{ij}(p_{ij})=
\blt{p}(\frac{\blt z}{n}).
\end{align*}
Furthermore, by \eqref{eq:service}, we have that
\begin{equation*}
S_{ij}^n(\blt{Z}^n,s,t)=S_{ij}(\wbr{\blt{Z}}^n,s,t).
\end{equation*}

The next theorem states that the fluid model arises as a limit of the fluid-scaled state descriptor under our assumptions.
\begin{theorem}[Fluit limit]\label{Ch5thm:fluid limit}
The sequence of the fluid-scaled measure-valued vector process
$\left(\blt{\wbr{\Q}}^n(\cdot),
\blt{\wbr{\Z}}^n(\cdot)\right)$ is tight and every accumulation point $\left(\blt{\wbr{\Q}}(\cdot),
\blt{\wbr{\Z}}(\cdot)\right)$ is a fluid model solution.
\end{theorem}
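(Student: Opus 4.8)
The proof of Theorem~\ref{Ch5thm:fluid limit} proceeds in two stages: first establishing tightness of the fluid-scaled measure-valued processes, and then showing that every subsequential limit satisfies the fluid model equations of Definition~\ref{def:FMSC}. For tightness, the plan is to work in the Polish space $C(\R_+,\mathcal{M}_F^{I\times J})^2$ and apply the standard criterion (compact containment plus oscillation control). Compact containment follows from Assumption~4 (the scaled initial conditions converge, hence are tight) together with the fact that the total mass $\wbr Q_{ij}^n(t)$ is bounded above by $\wbr Q_{ij}^n(0)+\wbr E_{ij}^n(t)$, which by Assumptions~2 and 3 is asymptotically bounded by an affine (hence locally bounded) function; since $\wbr{\Z}_{ij}^n(t)$ has mass no larger than $\wbr{\Q}_{ij}^n(t)$, a uniform mass bound holds on compact time intervals. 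To control oscillations, I would test the measures against functions $f\in\mathcal{C}_b$ and use the explicit prelimit dynamics: the map $t\mapsto\langle f,\wbr{\Q}_{ij}^n(t)\rangle$ changes only through arrivals (rate governed by $E_{ij}^n$), rejections (governed by $R_{ij}^n$), departures, and the deterministic aging of residual parking times; each contribution is controlled on a time interval of length $\delta$ by $C\delta$ plus a vanishing term, using the Lipschitz bound on $\wbr E_{ij}$ and the fact that $R_{ij}^n\le E_{ij}^n$, and using that the charging rates $p_{ij}(\cdot)$ are uniformly bounded by $c_j^{\max}$ so the residual-charging coordinates also age at bounded speed.

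Once tightness is established, fix a convergent subsequence with limit $\left(\blt{\wbr{\Q}}(\cdot),\blt{\wbr{\Z}}(\cdot)\right)$; by passing to a further subsequence and using the Skorokhod representation theorem, assume the convergence is almost sure. The task is then to pass to the limit in the prelimit identities \eqref{eq:MVQ}--\eqref{eq:Rejected}, evaluated on the generating classes $\mathcal{C}$ and $\mathcal{C'}$, and recover \eqref{ch5eq:FLQ} and the companion $\wbr{\Z}$-equation; Remark~\ref{ch5:remark1} then extends these to all Borel sets. For the $\wbr{\Q}$-equation this is comparatively direct: the prelimit measure $\wbr{\Q}_{ij}^n(t)(A)$ is a sum over accepted EVs of indicators that the residual deadline lies in $A+(t-s)$; a functional law of large numbers (conditioning on the arrival epochs and using that $B_{ijl},D_{ijl}$ are i.i.d.\ with law $F_{ij}$, independent of the acceptance decisions which are measurable with respect to the past) converts the sum into the integral $\int_0^t\Prob{D_{ij}\in A+(t-s)}\,d\wbr E_{ij}(s)$ minus the corresponding rejection integral, provided one first identifies the limiting rejection processes $\wbr R_{ij}$ and verifies the complementarity relation $\wbr R_i(t)=\int_0^t\ind{\wbr Q_i(s)=K_i}\,d\wbr R_i(s)$ and the split $\wbr R_{ij}(t)=\int_0^t\frac{\lambda_{ij}(s)}{\sum_h\lambda_{ih}(s)}\,d\wbr R_i(s)$. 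The latter requires care: $R_i^n$ increments only when $Q_i^n(\zeta^-)=K_i^n$, and in the fluid limit the rejected mass at node $i$ is distributed across types in proportion to the instantaneous arrival intensities; I would argue this by a sandwiching/continuity argument at the set of times where the limit $\wbr Q_i$ touches $K_i$, using that $\wbr E_{ij}$ is absolutely continuous.

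The genuinely delicate step is the $\wbr{\Z}$-equation, because the residual charging requirement $B_{ijl}(t)=(B_{ijl}-S_{ij}(\blt Z^n,\zeta_{ijl},t))^+$ depends on the \emph{entire trajectory} of $\blt Z^n$ through the service operator $S_{ij}$, which in turn depends on the optimal charging rates $\blt p(\cdot)$. Here I would invoke Theorem~\ref{Ch5Pr:continuous}: since $\blt p(\cdot)=\blt\Lambda(\cdot)\circ(\cdot)^{-1}$ is continuous on $(0,\infty)^{I\times J}$ (or, under the linearized Distflow model, Lipschitz via \cite{ReedZwart2014}), and since $\wbr{\blt Z}^n\to\wbr{\blt Z}$ uniformly on compacts along the subsequence, the cumulative service $S_{ij}^n(\blt Z^n,s,t)=S_{ij}(\wbr{\blt Z}^n,s,t)=\int_s^t p_{ij}(\wbr{\blt Z}^n(u))\,du$ converges to $S_{ij}(\wbr{\blt Z},s,t)$ uniformly in $s,t$ on compacts, by dominated convergence and the uniform bound $p_{ij}\le c_j^{\max}$. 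One subtlety is that $p_{ij}$ is only guaranteed continuous on the open orthant, so the argument needs the strict positivity $\wbr Z_{ij}(t)>0$ for $t>0$ — exactly the content of step~2 in the proof sketch of Theorem~\ref{Ch5thm:Uniqueness}, which follows from $f_{D_{ij}}(0)>0$ and $\lambda_{ij}>0$ ensuring a strictly positive inflow of uncharged fluid — so I would first establish this positivity from the already-derived limiting $\wbr{\Q}$-dynamics and the fact that freshly arrived EVs have strictly positive charging requirements with positive probability. With $S_{ij}$ convergence in hand, the functional law of large numbers applied to \eqref{eq:MVZ} on the class $\mathcal{C'}$ yields the $\wbr{\Z}$-equation, and Assumption~5 (atomlessness of the limiting initial measures) guarantees that the boundary terms $\Prob{(B_{ij},D_{ij})\in\partial(A'+\cdots)}$ vanish so that the continuity-set arguments go through. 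The main obstacle throughout is thus the coupling between the measure-valued state and the charging-rate allocation: one must propagate the path-level convergence of $\wbr{\blt Z}^n$ through the nonlinear, only-continuous map $\blt p(\cdot)$ without a contraction estimate, which is why the continuity result of Section~\ref{ch5:Perturbation} and the a priori positivity of $\wbr Z_{ij}$ are the linchpins of the argument.
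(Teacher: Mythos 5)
Your overall architecture coincides with the paper's: C-tightness via compact containment plus oscillation control, tightness of the rejection/departure processes, strict positivity of the fluid limits so that the merely continuous allocation map $\blt{p}(\cdot)$ from Theorem~\ref{Ch5Pr:continuous} can be applied, uniform convergence of $S_{ij}(\wbr{\blt{Z}}^n,s,t)$ by dominated convergence and monotonicity in $s$, and identification of the limit through a functional law of large numbers over \emph{accepted} arrivals together with an argument identifying $\wbr{R}_{i}$ and its proportional split across types (the paper does this via the auxiliary process $\wbr{H}^n_i$). The identification half of your sketch is sound and matches Section~\ref{Ch5:proof fluid lilit}, including the role of positivity of $\wbr{Z}_{ij}$ (the paper gets it by sandwiching $Z_{ij}$ below by an infinite-server queue fed by the accepted arrivals with service times $\frac{B_{ij}}{c_j^{\max}}\Min D_{ij}$).

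The genuine gap is in your oscillation control. You assert that each contribution to the increment of $\langle f,\wbr{\Q}^n_{ij}(\cdot)\rangle$ over an interval of length $h$ is $O(h)$ ``using the Lipschitz bound on $\wbr{E}_{ij}$ and $R^n_{ij}\le E^n_{ij}$,'' but the departure (boundary-loss) term is not controlled by these ingredients: the mass lost during $(s,t]$ is exactly the mass whose residual deadline (resp.\ residual charging requirement) lies in a strip of width $|t-s|$, and a priori this can be large if mass piles up near the boundary; since tightness is needed in the weak (not vague) topology on $\mathcal{M}_F$, you cannot evade this by testing only against functions vanishing near the origin. What is required is a uniform-in-time anti-concentration estimate, $\sup_{0\le t\le T}\sup_{x}\|\wbr{\blt{\Q}}^n(t)([x,x+\alpha])\|\le\delta$ together with the analogous horizontal/vertical strip bounds for $\wbr{\blt{\Z}}^n$ --- precisely Proposition~\ref{prop:boundary} of the paper, whose proof rests on Glivenko--Cantelli-type uniform estimates for the load processes \cite{gromoll2009,remerova14} and on the atomlessness of the limiting initial measures (Assumption~5), which you invoke only later for continuity sets although it is already indispensable here. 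With that estimate inserted, your tightness argument closes and the remainder of your plan reproduces the paper's proof.
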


When the allocation mechanism is given by the linearized Distflow power model~\ref{eq:Dist}, we can invoke Theorem \ref{Ch5thm:Uniqueness} to strengthen this result to a convergence result.
For the full AC case, the same can be concluded if the bandwidth allocation function is Lipschitz continuous. The proof of Theorem~\ref{Ch5thm:fluid limit} is given in
Section~\ref{Ch5:proof fluid lilit}, which is organized as follows.
\begin{enumerate}
  \item We establish tightness of the associated fluid-scaled measure-valued vector process $\left(\blt{\wbr{\Q}}^n(\cdot),
\blt{\wbr{\Z}}^n(\cdot)\right)$.
  \item We then show tightness for the fluid-scaled stochastic process describing the number of rejected customers, i.e., $\wbr{\blt{R}}^n(\cdot)$.
  \item The last step is to show that the limit of any convergent subsequence of
$\left(\blt{\wbr{\Q}}^n(\cdot),
\blt{\wbr{\Z}}^n(\cdot)\right)$ satisfies the fluid model equations.
\end{enumerate}
\begin{remark}
The fluid limit theorem holds even if the external arrival process is a process with a general mean $\br{E}_{ij}(\cdot)$. In this case, we need to modify the definition of a fluid model solution such that
$\wbr{R}_{ij}(t)=\int_{0}^{t}
\ind{\sum_{j=1}^{J}\wbr{Q}_{ij}(s)}
d\wbr{R}_{ij}(s)
$. However, it seems that the uniqueness of the fluid model solutions does not hold.
\end{remark}

In this section, we have obtained a fluid limit that holds for a general tree network. In the next section, we investigate under what assumptions the fluid limit converges to an invariant point.

\section{Invariant analysis}\label{Ch5: Convergence to IP}
In this section, we study the behavior of the system as time goes to infinity. To do so, we assume that the arrival rate is constant, i.e.,
$\wbr{E}_{ij}(t)=\lambda_{ij}t$.

First, we prove a result equivalent to \cite[Theorem~3.6]{kang2015}. There, a characterization of the invariant point for a loss system is shown, which we now prove in our setting. However, the difference with \cite{kang2015} is that we consider different test functions to define the fluid model and second, we consider multiple types of customers. All proofs are gathered in Section~\ref{ch5:proofs conv to IP}.

Define the traffic intensity at node $i$ of type-$j$ EVs by $\rho_{ij}:=\lambda_{ij}\E{D_{ij}}$ and the total traffic  intensity an node $i$  by $\rho_i:=\sum_{j=1}^{J}\rho_{ij}$. The following result characterizes the invariant states of a loss system with multiple types of EVs.
\begin{proposition}\label{Ch5prop:invpointQ}
Let $\lambda_{ij}\in (0, \infty)$. We have that
$(\blt{\Q}^*, \blt{q}^*)$ is invariant
if and only if for any Borel set $A\in \mathcal{B}(\R_+)$ and
$i,j\geq 1$,
\begin{equation*}
\Q_{ij}^*(A)= \frac{\lambda_{ij}}{\rho_{i}} (\rho_{i}\Min K_i)
 \int_{0}^{\infty} \Prob{D_{ij}\in A+s}ds,
\end{equation*}
and $q_{ij}^*=\frac{\rho_{ij}}{\rho_{i}} (\rho_{i}\Min K_i)$.
\end{proposition}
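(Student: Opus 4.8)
The plan is to characterize the invariant points by substituting the candidate stationary measures into the fluid model equations from Definition~\ref{def:FMSC} and reading off the consistency conditions. For an invariant point, the arrival rate is constant, $\wbr{E}_{ij}(t)=\lambda_{ij}t$, and the loss process must also grow linearly, $\wbr{R}_{ij}(t)=\gamma_{ij}t$ for some constant $\gamma_{ij}\geq 0$, with $\gamma_{ij}=\frac{\lambda_{ij}}{\sum_h \lambda_{ih}}\gamma_i$ where $\gamma_i$ is the node-$i$ loss rate. Because this is a \emph{loss} system (no charging dynamics enter $\wbr{\Q}_{ij}$), the relevant equation is \eqref{ch5eq:FLQ} together with \eqref{eq:qnet}. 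Plugging a time-stationary $\wbr{\Q}_{ij}(t)\equiv\Q_{ij}^*$ into \eqref{ch5eq:FLQ} and letting the initial term vanish as $t\to\infty$ (which it does since $\Prob{D_{ij}^0\in A+t}\to 0$), I would change variables $u=t-s$ in the two integrals to obtain
\begin{equation*}
\Q_{ij}^*(A)=(\lambda_{ij}-\gamma_{ij})\int_0^\infty \Prob{D_{ij}\in A+u}\,du.
\end{equation*}
Setting $A=\R_+$ gives $q_{ij}^*=(\lambda_{ij}-\gamma_{ij})\E{D_{ij}}$, and summing over $j$ gives $q_i^*=\sum_j(\lambda_{ij}-\gamma_{ij})\E{D_{ij}}$.

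Next I would pin down $\gamma_i$ using the complementarity condition $\wbr{R}_i(t)=\int_0^t \ind{\wbr Q_i(s)=K_i}\,d\wbr R_i(s)$. This forces a dichotomy: either $\gamma_i=0$ (no losses), in which case $q_i^*=\rho_i$ and consistency with the constraint $q_i^*\leq K_i$ requires $\rho_i\leq K_i$; or $\gamma_i>0$, in which case $q_i^*=K_i$, i.e.\ the node is saturated. To resolve which $\gamma_{ij}$ is selected when $\rho_i>K_i$, I would use the proportionality $\gamma_{ij}=\frac{\lambda_{ij}}{\sum_h\lambda_{ih}}\gamma_i$ and impose $q_i^*=K_i$: writing $q_i^* = \sum_j(\lambda_{ij}-\tfrac{\lambda_{ij}}{\sum_h\lambda_{ih}}\gamma_i)\E{D_{ij}}$. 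This is one linear equation in $\gamma_i$; solving it and substituting back, the acceptance rate $\lambda_{ij}-\gamma_{ij}$ at node $i$ of type $j$ comes out proportional to $\lambda_{ij}$, with the common factor chosen so that the total occupancy is exactly $\rho_i\wedge K_i$. Concretely, $\lambda_{ij}-\gamma_{ij}=\frac{\lambda_{ij}}{\rho_i}(\rho_i\wedge K_i)$ in both cases (when $\rho_i\leq K_i$ the factor is $1$). Substituting into the two displayed formulas yields exactly the claimed expressions, $\Q_{ij}^*(A)=\frac{\lambda_{ij}}{\rho_i}(\rho_i\wedge K_i)\int_0^\infty\Prob{D_{ij}\in A+s}\,ds$ and $q_{ij}^*=\frac{\rho_{ij}}{\rho_i}(\rho_i\wedge K_i)$.

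For the converse direction, I would verify that the proposed $(\blt\Q^*,\blt q^*)$, together with the linear $\wbr R_{ij}(t)$ just constructed, satisfies all the fluid model equations for the loss part: time-stationarity of \eqref{ch5eq:FLQ} follows from the change-of-variables computation run in reverse (using that $\int_0^\infty\Prob{D_{ij}\in A+u}\,du$ is finite because $\E{D_{ij}}<\infty$), and the complementarity condition holds because $q_i^*=K_i$ precisely on the set where $\gamma_i>0$. I would also need to check that $\wbr R_i$ is nondecreasing, which is immediate since $\gamma_i\geq 0$, and that the ratio identity tying $\wbr R_{ij}$ to $\wbr R_i$ holds by construction.

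The main obstacle I anticipate is the saturated case $\rho_i>K_i$: one must argue that the \emph{unique} admissible loss rate vector is the proportional one $\gamma_{ij}=\frac{\lambda_{ij}}{\sum_h\lambda_{ih}}\gamma_i$ — this is not a free modeling choice here but is dictated by the fluid-model relation $\wbr R_{ij}(t)=\int_0^t\frac{\lambda_{ij}(s)}{\sum_h\lambda_{ih}(s)}\,d\wbr R_i(s)$, so the real work is just checking that this relation plus the complementarity condition plus stationarity of $\wbr Q_i$ has a unique solution, and that the resulting occupancy is $K_i$ rather than something smaller. A secondary technical point is justifying the vanishing of the initial-condition terms and the interchange of limit and integral as $t\to\infty$, which needs the integrability of the tail $\Prob{D_{ij}\geq u}$; this is where the finiteness of $\E{D_{ij}}$ (equivalently $\rho_{ij}<\infty$, guaranteed by $\lambda_{ij}<\infty$ and the standing assumptions) is used. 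Everything else is routine bookkeeping with the change of variables $u=t-s$.
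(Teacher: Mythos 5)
Your proposal is correct and follows essentially the same route as the paper: assume invariance, deduce that the loss process is linear, combine the proportional-split relation for $\wbr{R}_{ij}$ with the complementarity condition to split into the full ($q_i^*=K_i$) and non-full cases, solve for the loss rate, and let $t\to\infty$ in \eqref{ch5eq:FLQ} after the change of variables $u=t-s$; the converse is a direct verification (the paper defers it to the argument of Theorem~3.6 in \cite{kang2015}). The one step you assert rather than derive—linearity of $\wbr{R}_{ij}$ under stationarity—is exactly what the paper obtains from \eqref{eq:Departures} and \eqref{eq:depE}, since a constant measure $\Q_{ij}^*$ makes the departure rate in \eqref{eq:depE} constant.
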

In the sequel, we examine the asymptotic behavior of
$(\wbr{\blt{\Z}}(\cdot), \wbr{\blt{ Z}}(\cdot))$. We make an additional assumption that the network is monotone as it is stated in the following definition.
\begin{definition}\label{ch5:defmonotone}
An allocation mechanism is called ``monotone'' if\quad $0<\blt{y}\leq \blt{z}$ implies that
$p_{ij}(\blt{y})\geq p_{ij}(\blt{z}) $.
\end{definition}
For instance, this property holds when the network has a line topology under the linearized Distflow model described in  Section~\ref{sec:Distflow}. In this case, \cite[Proposition~5]{BorstEgorovaZwart2014} can be applied directly in order to show the desired monotonicity property.  We conjecture that this monotonicity holds true for a line network under the AC power flow model as well, but have not been able to prove this, apart from the case of two nodes.
\begin{proposition}\label{Ch5prop:invpointZ}
If the network is monotone, then we have that $(\wbr{\blt{\Z}}(t), \wbr{\blt{ Z}}(t))
\rightarrow (\blt{\Z}^*,\blt{z}^*)$ as $t\rightarrow \infty$.
Furthermore, the vector $(\blt{\Z}^*,\blt{z}^*)$ satisfies the following relation. For any Borel set $A'\in \mathcal{B}(\R_+^2)$ and $i,j\geq 1$,
\begin{equation*}
\Z_{ij}^*(A')= \frac{\lambda_{ij}}{\rho_{i}} (\rho_{i}\Min K_i)
 \int_{0}^{\infty} \Prob{(B_{ij},D_{ij})\in A'+\left(p_{ij}(\blt{z}^*)s, s\right)}ds,
\end{equation*}
and $\blt{z}^*$ is given by the solution of the fixed-point equation
\begin{equation}\label{ch5eq:invariant}
z_{ij}^*=\frac{\lambda_{ij}}{\rho_i}(\rho_i\Min K_i)
\E{D_{ij}\Min \frac{B_{ij}}{p_{ij}(\blt{z}^*)}}.
\end{equation}
\end{proposition}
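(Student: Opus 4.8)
The plan is to run the same two-stage argument sketched after Theorem~\ref{Ch5thm:Uniqueness}: first establish convergence of $\wbr{\blt Z}(t)$ to a limit $\blt z^*$ using monotonicity, then bootstrap to convergence of the measure-valued part $\wbr{\blt\Z}(t)$ and read off the invariant equations. For the first stage, I would fix a convergent subsequence $t_n\to\infty$ (tightness of $\wbr{\blt Z}(\cdot)$ follows from the a priori bound $\wbr Z_{ij}(t)\le \wbr Q_{ij}(t)\le K_i$, which is itself a consequence of Proposition~\ref{Ch5prop:invpointQ} applied along the flow, or more directly from the fluid equations) and use the time-shift invariance from Remark~\ref{ch5:remark1} to argue that any subsequential limit is itself the initial condition of a fluid model solution. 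The monotonicity assumption enters exactly here: if $\blt y\le\blt z$ implies $p_{ij}(\blt y)\ge p_{ij}(\blt z)$, then $S_{ij}(\wbr{\blt Z},s,t)$ is monotone in the trajectory, and one can show — as in \cite{BorstEgorovaZwart2014, remerova14} — that starting from two ordered initial conditions the solutions stay ordered, so the flow map is monotone; combined with the bound $\wbr Z_{ij}(t)\le K_i$ this forces $\wbr Z_{ij}(t)$ to converge (monotone bounded trajectories, or a sandwiching argument between the trajectories from $\blt 0$ and from $\blt K$). Call the limit $\blt z^*$; it must be a fixed point of the flow, hence of the map $\blt z\mapsto \lambda_{ij}\E{D_{ij}\Min B_{ij}/p_{ij}(\blt z)}$ possibly truncated by the loss mechanism.

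For the invariant equation itself I would proceed as follows. At a fixed point the service rate per car is constant, $p_{ij}(\blt z^*)$, so $S_{ij}(\wbr{\blt Z}^*, s, t) = p_{ij}(\blt z^*)(t-s)$; substituting this into the $\wbr\Z_{ij}$ fluid equation with the initial term decayed to zero gives
\begin{equation*}
\Z_{ij}^*(A') = \int_0^\infty \Prob{(B_{ij},D_{ij})\in A'+(p_{ij}(\blt z^*)s, s)}\,\l_{ij}\,ds - (\text{loss term}),
\end{equation*}
after the change of variables $s\mapsto t-s$ and letting $t\to\infty$ (dominated convergence, using that the integrand is eventually zero once $s$ exceeds $D_{ij}\Min B_{ij}/p_{ij}(\blt z^*)$, which is integrable). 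The loss term is handled by Proposition~\ref{Ch5prop:invpointQ}: the rejection rate $d\wbr R_{i}$ at the invariant point is $(\lambda_i - K_i/\E{D_{ij}}\cdot\ldots)^+$, which aggregates across types with the split $\l_{ij}/\l_i$, and produces exactly the thinning factor $\frac{\l_{ij}}{\rho_i}(\rho_i\Min K_i)/\l_{ij} = \frac{1}{\rho_i}(\rho_i\Min K_i)$; multiplying through gives the stated formula. Taking $A'=\R_+^2$ collapses the integral to $\E{D_{ij}\Min B_{ij}/p_{ij}(\blt z^*)}$ and yields the fixed-point equation \eqref{ch5eq:invariant}.

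The main obstacle is the first stage — proving that $\wbr{\blt Z}(t)$ actually converges rather than merely being tight. Without monotonicity this can fail (the paper itself ends with a counterexample for general trees), so the argument must genuinely exploit Definition~\ref{ch5:defmonotone} to build the order-preservation of the fluid flow, and one has to be careful that the loss reflection term $\wbr R_{ij}$ does not destroy monotonicity — here the structure that $\wbr R_i$ activates only on $\{\wbr Q_i = K_i\}$ and is split proportionally is what keeps the comparison intact, essentially because $\wbr Q_{ij}$ has its own autonomous one-dimensional reflected dynamics (Proposition~\ref{ch5:analysisQ}) that converges by Proposition~\ref{Ch5prop:invpointQ} independently of $\wbr{\blt Z}$. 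A secondary technical point is upgrading convergence of the scalars $\wbr Z_{ij}(t)$ to weak convergence of the measures $\wbr\Z_{ij}(t)$: this follows by testing against $f\in\mathcal C_b$, using continuity of $p_{ij}(\cdot)$ at $\blt z^*$ (Theorem~\ref{Ch5Pr:continuous}) to pass the limit inside $S_{ij}$, and the atom-free / density assumptions to control the boundary.
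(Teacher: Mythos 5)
Your second stage (computing the thinned input rate $\tfrac{\lambda_{ij}}{\rho_i}(\rho_i\Min K_i)$ from Proposition~\ref{Ch5prop:invpointQ}, substituting the constant rate $p_{ij}(\blt z^*)$ into the $\wbr\Z_{ij}$ equation, and recovering \eqref{ch5eq:invariant} by taking $A'=\R_+^2$) is in line with what the paper does. The gap is in your first stage, the actual convergence of $\wbr{\blt Z}(t)$. You hang everything on order preservation of the fluid \emph{flow} (``starting from two ordered initial conditions the solutions stay ordered, as in \cite{BorstEgorovaZwart2014,remerova14}''), but neither reference provides this: \cite{BorstEgorovaZwart2014} proves monotonicity of the allocation function $\blt p(\cdot)$ for radial polyhedral networks, i.e.\ exactly Definition~\ref{ch5:defmonotone}, and \cite{remerova14} does not prove comparison of fluid trajectories either. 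Turning monotonicity of $\blt p$ into a comparison theorem for this delay-type, measure-valued fluid model with a loss/reflection term is a substantive lemma that you would have to prove, not cite. Moreover, your sandwich ``between the trajectories from $\blt 0$ and from $\blt K$'' is not well defined: the state is a vector of measures on $\R_+^2$, $K_i$ bounds only total mass, and there is no maximal initial measure from which to launch the upper trajectory; so even granting flow monotonicity, the upper envelope of your argument is missing. Finally, your conclusion needs uniqueness of the solution of \eqref{ch5eq:invariant} to identify the two extremal limits, and you never establish or cite it (the paper gets it from \cite[Theorem~2]{remerova14} and \cite{aveklourisstochastic}).

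For comparison, the paper's proof avoids flow comparison entirely: after fixing $\wbr{\blt Q}(0)=\blt q^*$ so that $\lambda_{ij}t-\wbr R_{ij}(t)=\tfrac{\lambda_{ij}}{\rho_i}(\rho_i\Min K_i)t$, it invokes \cite[Theorem~3]{remerova14}, which yields vectors $\blt b^l\leq\liminf_t\wbr{\blt Z}(t)\leq\limsup_t\wbr{\blt Z}(t)\leq\blt b^u$ satisfying
\begin{equation*}
b^l_{ij}=\frac{\lambda_{ij}}{\rho_i}(\rho_i\Min K_i)\,
\E{D_{ij}\Min \frac{B_{ij}}{\sup_{\blt b^l\leq\blt z\leq\blt b^u}p_{ij}(\blt z)}},
\qquad
b^u_{ij}=\frac{\lambda_{ij}}{\rho_i}(\rho_i\Min K_i)\,
\E{D_{ij}\Min \frac{B_{ij}}{\inf_{\blt b^l\leq\blt z\leq\blt b^u}p_{ij}(\blt z)}};
\end{equation*}
monotonicity of $\blt p$ is used only to replace the $\sup$ and $\inf$ by $p_{ij}(\blt b^l)$ and $p_{ij}(\blt b^u)$, so that both $\blt b^l$ and $\blt b^u$ solve \eqref{ch5eq:invariant}, and uniqueness of that solution forces $\blt b^l=\blt b^u=\blt z^*$. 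If you want to salvage your route you would need to prove the order-preservation lemma for the measure-valued dynamics (including the rejection term) and construct a genuine dominating trajectory; as written, the argument does not close, whereas the liminf/limsup route uses exactly the hypothesis of Definition~\ref{ch5:defmonotone} and nothing more.
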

The proof of the last proposition combines Proposition~\ref{Ch5prop:invpointQ} and arguments from \cite[Theorem~2]{remerova14}.
Moreover, using similar arguments from \cite[Theorem~6]{remerova14} and
\cite[Theorem~3.3]{kang2012asymptotic}, it
 can be shown that the fluid and steady-state limits can be interchanged and
the sequence of fluid-scaled stationary distributions
$(\wbr{\blt{\Z}}^n(\infty), \wbr{\blt{Z}}^n(\infty))$ converges weakly to the invariant point as $n\rightarrow \infty$, provided that the invariant point is unique.

The invariant point $\blt{z}^*$, when it is unique, can be computed by solving a single ACOPF problem, which is convex in our case since it admits an exact convex relaxation.
Define the functions
\begin{equation}\label{eq:defg}
g_{ij}(x):=\frac{\lambda_{ij}}{\rho_i}(\rho_i\Min K_i) \E{ \min \{D_j x , B_{j}\}},
\end{equation}
and recall that the aggregated  allocation (the total power which type-$j$ EVs consume) at node $i$ is $\Lambda_{ij}(\blt z):=z_{ij} p_{ij}(\blt z)$. Also, for a random variable $Y$, denote by $\inf(Y)$ the leftmost point of its support.
\begin{proposition}[Characterization of the invariant point]\label{Th:UnFP}
Let $\inf{(D_j/B_{j})}\leq 1/c_j^{\text{max}}$.
The solution $\blt z^*$ of \eqref{ch5eq:invariant} is unique and is given by
$z_{ij}^*=\frac{\Lambda_{ij}^*}{g_{ij}^{-1}(\Lambda_{ij}^*)}$, where $\blt \Lambda^*$
is the unique solution of the optimization problem
 \begin{equation}\label{GDOP}
\begin{aligned}
& \underset{}{\max\limits_{\blt{\Lambda},\blt W}}
& & \sum_{i=1}^{I} \sum_{j=1}^{J}  G_{ij} (\Lambda_{ij}) \\
& \text{subject to}
&&W_{ik}-W_{kk}-P_{\Node(k)} \res_{ik}-Q_{\Node(k)} \reac_{ik}=0,\\
&&&\underline{ \upsilon}_i \leq  W_{ii}
\leq \overline{ \upsilon}_i, \blt{W}(\ep_{ik})  \succeq  0,\
\  \Lambda_{ij} \leq M_i,\\
&&&  0 \leq \Lambda_{ij} \leq g_{ij}(c_j^{max}),\ \ep_{ik}\in \Edge.
\end{aligned}
\end{equation}
Furthermore, $G_{ij}(\cdot)$ is a strictly concave function such that $G_{ij}'(\cdot)=u_{ij}'(g^{-1}_{ij}(\cdot))$ for any $i,j\geq 1$.
\end{proposition}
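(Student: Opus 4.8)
The plan is to reformulate the fixed-point equation \eqref{ch5eq:invariant} as the first-order optimality conditions of \eqref{GDOP} via a change of variables, and then invoke the exactness of the convex relaxation to guarantee existence and uniqueness. First I would fix $i,j$ and analyze the scalar function $g_{ij}(x) = \frac{\lambda_{ij}}{\rho_i}(\rho_i \Min K_i)\E{\min\{D_j x, B_j\}}$ on $[0, c_j^{\max}]$. Since $x \mapsto \min\{D_j x, B_j\}$ is nondecreasing, continuous, and (pathwise) concave, $g_{ij}$ is continuous, nondecreasing, and concave, with $g_{ij}(0)=0$. The condition $\inf(D_j/B_j) \leq 1/c_j^{\max}$ ensures that on the event $\{D_j/B_j > 1/c_j^{\max}\}$ (equivalently $D_j c_j^{\max} > B_j$) the minimum is not always attained at the first branch, but more importantly I would use it to show $g_{ij}$ is \emph{strictly} increasing on $[0,c_j^{\max}]$: for $x < x'$ in this range, $\Prob{D_j x' < B_j} > 0$ would fail only if $x' \leq \inf(B_j/D_j)$ a.s., and the hypothesis rules this out over the relevant range; hence $g_{ij}^{-1}$ is well-defined, continuous, and strictly increasing on $[0, g_{ij}(c_j^{\max})]$. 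With $\Lambda_{ij} := z_{ij} p_{ij}(\blt z)$ and the observation that \eqref{ch5eq:invariant} reads $z_{ij}^* = g_{ij}(p_{ij}(\blt z^*))$, I get the clean relation $p_{ij}(\blt z^*) = g_{ij}^{-1}(z_{ij}^*)$ and $\Lambda_{ij}^* = z_{ij}^* \, g_{ij}^{-1}(z_{ij}^*)$; inverting, $z_{ij}^* = \Lambda_{ij}^*/g_{ij}^{-1}(\Lambda_{ij}^*)$, which is the claimed formula, provided $\Lambda^*$ solves \eqref{GDOP}.

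Next I would verify that the constraint set of \eqref{GDOP} is exactly the image, under the map $\blt p \mapsto \blt\Lambda$ with $\Lambda_{ij} = z_{ij}p_{ij}$, of the feasible region of the invariant-state problem: the load constraint $\sum_j \Lambda_{ij} \leq M_i$ becomes $\Lambda_{ij} \leq M_i$ after noting (from Proposition~\ref{Ch5prop:invpointZ}, via $z_{ij}^* = g_{ij}(p_{ij}(\blt z^*))$ and the structure of $g_{ij}$) how the per-type power enters, the box constraint $0 \leq p_{ij} \leq c_j^{\max}$ transforms into $0 \leq \Lambda_{ij} \leq g_{ij}(c_j^{\max})$ because $g_{ij}$ is an increasing bijection onto its range, and the Kirchhoff/voltage constraints \eqref{eq:KVL} together with $\blt W(\ep_{ik}) \succeq 0$ and $\underline\upsilon_i \leq W_{ii} \leq \overline\upsilon_i$ carry over verbatim once the active power $P_{\Node(k)}$ is expressed through the $\Lambda$-variables. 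The objective: I would define $G_{ij}$ by $G_{ij}(y) := \int_0^y u_{ij}'(g_{ij}^{-1}(s))\,ds$, so that $G_{ij}' = u_{ij}' \circ g_{ij}^{-1}$. Since $u_{ij}'$ is strictly decreasing (as $u_{ij}$ is strictly concave) and $g_{ij}^{-1}$ is strictly increasing, $G_{ij}'$ is strictly decreasing, hence $G_{ij}$ is strictly concave, as asserted. A chain-rule computation shows that maximizing $\sum_{ij} z_{ij} u_{ij}(p_{ij})$ subject to the original constraints is equivalent to maximizing $\sum_{ij} G_{ij}(\Lambda_{ij})$ subject to \eqref{GDOP} — this needs care because $z_{ij}$ itself depends on $p_{ij}$ at the fixed point, so I would argue at the level of KKT stationarity rather than literal substitution, matching the Lagrange multipliers of the two problems.

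For uniqueness and existence of $\blt\Lambda^*$: the feasible set of \eqref{GDOP} is convex (it is the convex-relaxed AC feasible region of Section~\ref{sec:charging control rule} intersected with linear constraints, and Remark~\ref{re:AlCon} plus \cite[Theorem~5]{low14} give exactness, so optimizing over the relaxation returns the true optimum), nonempty (it contains $\blt\Lambda = \blt 0$), and compact (bounded by the box constraints and closed); $\sum_{ij} G_{ij}$ is continuous and strictly concave on the relevant coordinates, so the maximizer exists and is unique. Pulling this back through the bijections $g_{ij}$ yields a unique $\blt z^*$ solving \eqref{ch5eq:invariant}, and I would finally check that this $\blt z^*$ indeed satisfies the full fixed-point relation including the $(\rho_i \Min K_i)$ prefactor by substituting back into the formula from Proposition~\ref{Ch5prop:invpointZ}. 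I expect the main obstacle to be the KKT-matching step: because the coupling between $z_{ij}$ and $p_{ij}$ at a fixed point is implicit, one cannot simply substitute variables in the objective, and one must instead show that a point is stationary for the weighted-utility problem \emph{at its own fixed-point weights} if and only if its $\Lambda$-image is stationary for \eqref{GDOP}; getting the multipliers and the strict-concavity bookkeeping exactly right (and handling the boundary cases $z_{ij}^* = 0$, where $p_{ij}$ is set to $0$ by convention) is where the real work lies.
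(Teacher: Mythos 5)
Your overall strategy coincides with the one the paper indicates: adjoin the fixed-point (Little's law) relation \eqref{ch5eq:invariant} to the KKT conditions characterizing $\blt p(\blt z)$ and rewrite the combined system as the KKT conditions of \eqref{GDOP}; the paper itself does not carry this out here but defers the detailed argument to \cite[Theorem~1]{aveklourisstochastic}. The difficulty is that your proposal stops exactly where that argument begins. The equivalence ``$\blt z^*$ solves \eqref{ch5eq:invariant} if and only if $\blt \Lambda^*=\blt z^*\circ\blt p(\blt z^*)$ is the optimizer of \eqref{GDOP}'' is the entire content of the proposition: one must write the stationarity and complementary-slackness conditions of \eqref{ROPband} at the weights $\blt z^*$, substitute $p_{ij}(\blt z^*)=g_{ij}^{-1}(\Lambda_{ij}^*)$ coming from the fixed point, and check that the resulting system, multipliers included, is precisely the KKT system of \eqref{GDOP} (with the box constraint $\Lambda_{ij}\le g_{ij}(c_j^{\max})$ playing the role of $p_{ij}\le c_j^{\max}$, and with the degenerate cases $\Lambda_{ij}^*=0$ and active voltage constraints handled). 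You explicitly flag this step as ``where the real work lies'' and do not perform it; but without it neither the formula $z_{ij}^*=\Lambda_{ij}^*/g_{ij}^{-1}(\Lambda_{ij}^*)$ nor uniqueness of $\blt z^*$ follows, since uniqueness requires that \emph{every} fixed point maps to the (unique) maximizer $\blt\Lambda^*$, which is exactly the forward direction of the missing equivalence. So as written this is a plan, not a proof.

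There is also a concrete algebraic error in the reduction you do carry out. Equation \eqref{ch5eq:invariant} does \emph{not} read $z_{ij}^*=g_{ij}(p_{ij}(\blt z^*))$: since $\E{D_{ij}\Min B_{ij}/p}=\E{\min\{D_{ij}p,B_{ij}\}}/p$, the fixed point gives $z_{ij}^*\,p_{ij}(\blt z^*)=g_{ij}(p_{ij}(\blt z^*))$, i.e.\ $\Lambda_{ij}^*=g_{ij}(p_{ij}(\blt z^*))$, whence $p_{ij}(\blt z^*)=g_{ij}^{-1}(\Lambda_{ij}^*)$ and $z_{ij}^*=\Lambda_{ij}^*/g_{ij}^{-1}(\Lambda_{ij}^*)$. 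Your chain $z_{ij}^*=g_{ij}(p_{ij}(\blt z^*))$, $\Lambda_{ij}^*=z_{ij}^*\,g_{ij}^{-1}(z_{ij}^*)$, followed by ``inverting'' to $z_{ij}^*=\Lambda_{ij}^*/g_{ij}^{-1}(\Lambda_{ij}^*)$, is a non sequitur (it would need $g_{ij}^{-1}(z_{ij}^*)=g_{ij}^{-1}(\Lambda_{ij}^*)$); you land on the correct formula without actually deriving it. The same correct relation $\Lambda_{ij}^*=g_{ij}(p_{ij}(\blt z^*))$ is what justifies the constraint translation $0\le p_{ij}\le c_j^{\max}\Leftrightarrow 0\le\Lambda_{ij}\le g_{ij}(c_j^{\max})$, rather than the appeal you make to Proposition~\ref{Ch5prop:invpointZ}. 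Your treatment of $g_{ij}$ (strict monotonicity from $\inf(D_j/B_j)\le 1/c_j^{\max}$), the definition $G_{ij}(y)=\int_0^y u_{ij}'(g_{ij}^{-1}(s))\,ds$, and the existence/uniqueness of $\blt\Lambda^*$ on the convex compact feasible set are fine, but they are the easy part of the statement.
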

By \eqref{eq:power}, observe that $W_{kk}$ depends on $\blt{z}$ through the products $z_{ij} p_{ij}(\blt z)$. By the definition of $\blt \Lambda$, we have that $W_{kk}$ depends only on $\blt \Lambda$. That is, the previous optimization problem is indeed independent of the fixed point $\blt z^*$.
Note that when the assumption $\inf{(D_j/B_{j})}\leq 1/c_j^{\text{max}}$ is violated, there can be a continuum of invariant fluid model solutions \cite{remerova14}.
To arrive at \eqref{GDOP}, the essential idea is to add Little's law \eqref{ch5eq:invariant} to the set of Karush-Kuhn-Tucker (KKT) conditions that characterize
$\blt p ( \blt z)$ and rewrite all equations in such a way that they form the KKT conditions for the problem \eqref{GDOP}. The proof of Proposition~\ref{GDOP} is given in \cite[Theorem~1]{aveklourisstochastic}.

A natural question is if Proposition~\ref{Ch5prop:invpointZ} holds for non-monotone networks. However, this is an open problem even in the area of communication networks; see \cite{remerova14} and \cite{BorstEgorovaZwart2014}. In bandwidth-sharing networks, the feasible set is polyhedral. It is further proved that if the network topology is radial, then the monotonicity property holds
\cite[Proposition~5]{BorstEgorovaZwart2014}. Unfortunately, this is not the case in our setting. Distribution  networks are not in general monotone. More surprisingly the monotonicity property for the tree networks does not hold even if we consider a polyhedral feasible set, i.e., linearized Distflow model. In the next section, we explain  the reason why the monotonicity property fails.

\subsection{A counterexample of monotonicity for a general tree network}
A line network is monotone under the linearized Distflow model as we have already discussed and we conjecture this property is true for the AC power flow model as well.
However, if we extend the line network to a tree network, the monotonicity property may fail to hold for both power flow models. Below we present a counterexample for both of power flow models.

Assume a tree network with four nodes, i.e., one feeder and three load nodes. The feeder (node 0) is connected to node 1, which has two children, nodes 2 and 3. Moreover, assume that the network is constant in the sense that all the resistances and reactances are the same for all the lines and take $\res_{pk}=\reac_{pk}=0.1$ for any edge $\epsilon_{pk}$. Further, the voltage magnitude at the feeder is fixed and taken to be $W_{00}=1$ and the lower bound for the voltage magnitude is given by $\underline{\upsilon}=(0.9)^2=0.81$.
We shall show numerically that monotonicity does not hold for this simple tree network using the proportional fairness allocation mechanism. Indeed, we solve the optimization problem for the vectors
$\blt{z}=(1,1,1)$ and $\blt{y}=(1,2,1)$. The allocated power to cars is given in Table~\ref{table:conterE1},
\begin{table}[!h]
\begin{center}
\caption{Allocated power to EVs}
\label{table:conterE1}
\begin{tabular}{ |c|c|c|c| }
 \hline
  & Node~1 & Node~2& Node~3 \\
  \hline
 $\blt{p}(\blt{z})$ & $0.3050$ & $0.2008$& $0.2008$ \\
 $\blt{p}(\blt{y})$ & $0.2297$ & $0.1148$& $0.2177$ \\
 \hline
 \end{tabular}
\end{center}
\end{table}
where we observe that $p_3(\blt {y})>p_3(\blt {z})$ and hence the network is not monotone. The voltage magnitudes are given in Table~\ref{table:conterE2}.
\begin{table}[!h]
\begin{center}
\caption{Voltage magnitudes}
\label{table:conterE2}
\begin{tabular}{ |c|c|c|c| }
 \hline
  & Node~1 & Node~2& Node~3 \\
  \hline
 $\blt{W}(\blt{z})$ & $0.8507$ & $0.8100$& $0.8100$ \\
 $\blt{W}(\blt{y})$ & $0.8566$ & $0.8100$& $0.8124$ \\
 \hline
 \end{tabular}
\end{center}
\end{table}

The intuition behind this counterexample is as follows.
The voltage constraints at the leaf nodes are both active for vector $\blt{z}$ as the network is constant. The voltage magnitude constraint remains active in the node where we increase the number of EVs, i.e., node 2 in this example. The total allocated power at node~2 is $\Lambda_2(\blt{y})=2*0.1148=0.2296>0.2008=\Lambda_2(\blt{z})$. However, the feeder can not allocate more power (than $\Lambda_2(\blt{y})$) to node~2 because of the voltage drop constraints. As a result, the feeder allocates more power to node~3 and hence $p_3(\blt {y})>p_3(\blt {z})$, even though the number of EVs at node~3 does not increase. To see this, we remove the voltage drop constraints from the model and solve again the optimization problem; see Tables~\ref{table:conterE3} and \ref{table:conterE4}. Observe now that $\blt{p}(\blt{z})\geq \blt{p}(\blt{y})$ and the voltage magnitude at node~2 decreases.
\begin{table}[!h]
\begin{center}
\caption{Allocated power to EVs without voltage constraints}
\label{table:conterE3}
\begin{tabular}{ |c|c|c|c| }
 \hline
  & Node~1 & Node~2& Node~3 \\
  \hline
 $\blt{p}(\blt{z})$ & $0.9799$ & $0.5708$& $0.5708$ \\
 $\blt{p}(\blt{y})$ & $0.7668$ & $0.3643$& $0.5156$ \\
 \hline
 \end{tabular}
\end{center}
\end{table}

\begin{table}[!h]
\begin{center}
\caption{Voltage magnitudes without voltage constraints}
\label{table:conterE4}
\begin{tabular}{ |c|c|c|c| }
 \hline
  & Node~1 & Node~2& Node~3 \\
  \hline
 $\blt{W}(\blt{z})$ & $0.3457$ & $0.2164$& $0.2164$ \\
 $\blt{W}(\blt{y})$ & $0.3763$ & $0.2047$& $ 0.2631$ \\
 \hline
 \end{tabular}
\end{center}
\end{table}
Even more surprisingly, the same behavior holds even if we use the linearized Distflow model. To see that, observe that by \eqref{eq:Dist}, the voltages at leaf nodes have an explicit solution, namely
\begin{align*}
W_{22}^{lin}(\blt{\Lambda})=W_{00}-0.2 (\Lambda_1+2\Lambda_2+\Lambda_3),\\
W_{33}^{lin}(\blt{\Lambda})=W_{00}-0.2 (\Lambda_1+\Lambda_2+2\Lambda_3).
\end{align*}
In other words, the voltages at leaf nodes depend on the power which is allocated to all three nodes. This is not the case for the constraints in bandwidth-sharing networks studied in \cite{BorstEgorovaZwart2014}. Constraints like the above correspond to non-tree networks in their setting. Hence, the desired monotonicity property does not hold in general in our model. Our intuition agrees with the numerical results in Table~\ref{table:conterE5}.
\begin{table}[!h]
\begin{center}
\caption{Allocated power to EVs for the linearized Distflow model}
\label{table:conterE5}
\begin{tabular}{ |c|c|c|c| }
 \hline
  & Node~1 & Node~2& Node~3 \\
  \hline
 $\blt{p}(\blt{z})$ & $0.3167$ & $0.2111$& $0.2111$ \\
 $\blt{p}(\blt{y})$ & $0.2375$ & $0.1188$& $0.2375$ \\
 \hline
 \end{tabular}
\end{center}
\end{table}

\section{Proofs for Section~\ref{ch5:Perturbation}}\label{ch5:proofs of pertupation}

\begin{proof}[Proof of Proposition~\ref{prop:feasible}]
First, note that the point $\blt{0}$ lies in the feasible set by choosing $W_{pk}=W_{00}$. We now define a partition of the set $\Node$. Recall that $\Node(k)$ denotes the subtree rooted in node $k \in \Node$ (including node $k$).
Let us define the following sets
$\mathcal{L}_0:=\left\{k\in \Node: \Node(k)= \{k\} \right\}$ and for any $m\geq 1$,
\begin{equation*}
\mathcal{L}_m:=\left\{k\in \Node\setminus \bigcup_{n=0}^{m-1}\mathcal{L}_n:
\Node(l)\subseteq
\bigcup_{n=0}^{m-1}\mathcal{L}_n \cup \{k\} \right\}.
\end{equation*}
As the number of nodes
$I+1$ is finite, there exists $I'\leq I+1$ such that $\mathcal{L}_{I'}=\{0\}$, i.e., $\mathcal{L}_{I'}$ contains only the feeder node.
Note that $\mathcal{L}_0$ is the set of leaf nodes and the family
$\mathcal{L}:=\{\mathcal{L}_m,  0\leq m\leq I'\}$
is a  partition of the set $\Node$. Indeed, we have that $\emptyset \notin \mathcal{L}$, $\bigcup_{m=0}^{I'}\mathcal{L}_m=\Node$, and $\mathcal{L}_i \cap \mathcal{L}_k=\emptyset $ for $i\neq k$.
In Figure~\ref{fig:parition}, we depict an example of a partition with five sets.
\begin{figure}[!h]
\centering
\includegraphics[scale=0.5]{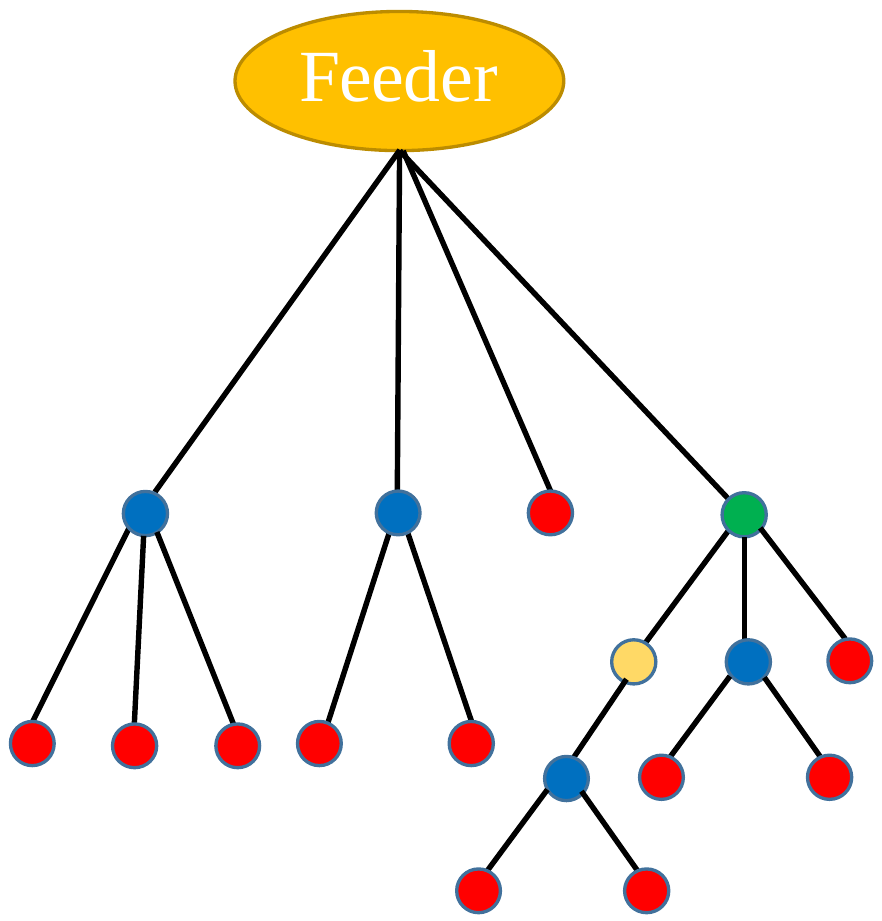}
\caption{The sets $\mathcal{L}_i$ in a tree network. In this case $I'=4$. The red nodes are in  $\mathcal{L}_0$, the blue nodes are in $\mathcal{L}_1$, the yellow node is in  $\mathcal{L}_2$, the green node is in $\mathcal{L}_3$, and $\mathcal{L}_4$ includes only the feeder.}
\label{fig:parition}
\end{figure}

Without loss of generality, we consider a single type of EVs; otherwise set $\Lambda_k:=\sum_{j=1}^{J}\Lambda_{kj}$. To simplify the notation, in the rest of the proof we write $\blt{\Lambda}$ instead of $\blt{\Lambda}(\blt{z})$ and $W_{kk}$ instead of $W_{kk}(\blt{\Lambda})$.
Recalling that $\blt{\Lambda}$ is a feasible point of \eqref{ROPband}, we have that $\Lambda_k\leq M_k$, $\Lambda_k\leq z_k c^{\max}$ and for $k\geq 1$, $\epsilon_{pk}\in \Edge$,
\begin{equation}
\begin{split}
W_{pk}-W_{kk}-P_{\Node(k)} \res_{pk}-Q_{\Node(k)} \reac_{pk}=0, \label{eq:FeasNet1} \\
\underline{ \upsilon}_k \leq W_{kk}
\leq \wbr{\upsilon}_k,\\
W_{pp} W_{kk}
- W_{pk}^2\geq 0.
\end{split}
\end{equation}

Clearly, $\blt{\Lambda}'$ satisfies the linear constraints of \eqref{ROPband}, i.e.,
$\Lambda_k'\leq \Lambda_k\leq M_k$  and
$\Lambda_k'\leq\Lambda_k\leq z_k c^{\max}$. To show that $\blt{\Lambda}$ is a feasible point of \eqref{ROPband}, we need to construct $W_{il}'$, $i,l\geq 0$ such that the additional constraints of \eqref{ROPband} are satisfied if we replace $\blt{\Lambda}$ by  $\blt{\Lambda}'$. To this end, set $W_{00}'=W_{00}$,
$W_{pk}'=W_{pk}$,
 for $\epsilon_{pk}\in \Edge$.
Further,  $W_{kk}'$ for $k\geq 1$, are given by the solution of
\begin{equation}\label{eq:FeasNet2}
W_{pk}'-W_{kk}'-P_{\Node(k)}' \res_{pk}-Q_{\Node(k)}' \reac_{pk}=0,\ \epsilon_{pk}\in \Edge.
\end{equation}

We shall show that $W_{kk}\leq W_{kk}'$ for $k \in  \Node$. The proof is then concluded by observing that by the inequality
$W_{kk}
\leq W_{kk}'$,
we have that
$\underline{ \upsilon}_k \leq W_{kk}(\blt{\Lambda}')$
for $k\geq 1$. Furthermore, by the third equation of \eqref{eq:FeasNet1}, we get  for $\epsilon_{pk}\in \Edge$,
\begin{equation*}
\begin{split}
W_{pp}'W_{kk}'-W_{pk}'^2
=  W_{pp}'W_{kk}'
-W_{pk}^2
&=W_{pp}'W_{kk}'
-W_{pp}W_{kk}\\
&\geq W_{pp}(W_{kk}'-W_{kk})\geq 0.
\end{split}
\end{equation*}
Thus, $\blt{\Lambda}'$ satisfies all the constraints  of \eqref{ROPband}, and hence it is a  feasible point.

We now proceed to the proof of the claim that $W_{kk}\leq W_{kk}'$ for $k \in  \Node$.
Define $a_{pkls}:=\frac{\res_{pk}\res_{ls}+\reac_{pk}\reac_{ls}}{\res_{ls}^2+\reac_{ls}^2}$ and
$\Node(k)^-:=\Node(k)\setminus \{k\}$.
For $k\in \mathcal{L}_{m}$ for some $0\leq m < I'$ we have that
\begin{align*}\label{eq:FeasNet3}
W_{kk}-W_{kk}'=& \res_{pk} (P_{\Node(k)}'-P_{\Node(k)}) + \reac_{pk} (Q_{\Node(k)}'-Q_{\Node(k)})\\
=&\res_{pk}\sum_{l\in \Node(k)}(\Lambda_l'-\Lambda_l)
+\sum_{l\in \Node(k)}\sum_{\epsilon_{ls}\in \Edge(k)}
a_{pkls} (W_{ll}'-W_{ll}+W_{ss}'-W_{ss}).
\end{align*}
The last equation can be rewritten as follows
\begin{equation}\label{eq:FeasNet4}
\begin{split}
(1+\sum_{\epsilon_{ks}\in \Edge(k)} a_{pkks})
(W_{kk}-W_{kk}')=
\res_{pk}\sum_{l\in \Node(k)^-}(\Lambda_l'-\Lambda_l)
+\res_{pk}(\Lambda_k'-\Lambda_k)\\
+\sum_{l\in \Node(k)^-}\sum_{\epsilon_{ls}\in \Edge(k)}
a_{pkls} (W_{ll}'-W_{ll}+W_{ss}'-W_{ss})
+\sum_{\epsilon_{ks}\in \Edge(k)}a_{pkks}(W_{ss}'-W_{ss}).
\end{split}
\end{equation}
We now show the inequality $W_{kk}\leq W_{kk}'$ for each $k$ by induction.
Let
$k\in \mathcal{L}_{0}$. By \eqref{eq:FeasNet4}, we have that
\begin{equation}\label{eq:L0}
W_{kk}-W_{kk}'= \res_{pk}(\Lambda_k'-\Lambda_k)\leq 0,
\end{equation}
where $p$ is the unique parent of node $k$.
If $m=1$ (i.e., $k \in \mathcal{L}_1$), then we have that
$\Node(k)^-=\Node(k)\setminus \{k\}=\mathcal{L}_0\cap\Node(k)\setminus \{k\}$ and
$\{\epsilon_{ls}\in \Edge(k):  l\in \Node(k)\setminus \{k\}\}=\emptyset$. Further,
$\{s: \epsilon_{ks}\in \Edge(k)\}=\mathcal{L}_0\cap\Node(k)\setminus \{k\}$.
By \eqref{eq:FeasNet4} and \eqref{eq:L0}, we obtain
\begin{equation}\label{eq:L1}
\begin{split}
(1+\sum_{s\in \mathcal{L}_0\cap\Node(k)^-} a_{pkks})
(W_{kk}-W_{kk}')=\res_{pk}(\Lambda_k'-\Lambda_k)\\
+\sum_{l\in \mathcal{L}_0\cap\Node(k)^-}(\res_{pk}-a_{pkkl}\res_{kl})(\Lambda_l'-\Lambda_l).
\end{split}
\end{equation}
Now, observe that
\begin{equation*}
\begin{split}
\res_{pk}-a_{pkkl}\res_{kl}&=
\res_{pk}-\res_{kl} \frac{\res_{pk}\res_{kl}+\reac_{pk}\reac_{kl}}{\res_{kl}^2+\reac_{kl}^2}\\
&= \left(\res_{kl}^2+\reac_{kl}^2\right)^{-1}
\left(\res_{pk}\res_{kl}^2+ \res_{pk}\reac_{kl}^2 -\res_{pk}\res_{kl}^2-\res_{kl}\reac_{pk}\reac_{kl}  \right)\\
&= \reac_{kl}\left(\res_{kl}^2+\reac_{kl}^2\right)^{-1}
\left(\res_{pk}\reac_{kl}- \res_{kl}\reac_{pk} \right)= 0,
\end{split}
\end{equation*}
where the last equation holds by the assumption that $\frac{\res_{pk}}{\reac_{pk}}$ is constant.
That is, $
W_{kk}\leq W_{kk}'$, for $k\in \mathcal{L}_1$. Suppose now that $k \in \mathcal{L}_2$. By \eqref{eq:FeasNet4}, we have that
\begin{equation*}
\begin{split}
(1+\sum_{\epsilon_{ks}\in \Edge(k)} a_{pkks})
(W_{kk}-W_{kk}')&=
\res_{pk}\sum_{m=0}^{1}\
\sum_{l\in\mathcal{L}_m \cap \Node(k)^-}(\Lambda_l'-\Lambda_l)
+\res_{pk}(\Lambda_k'-\Lambda_k)\\
&+\sum_{l\in \mathcal{L}_1 \cap\Node(k)^-}
\sum_{\substack{ \epsilon_{ls}\in \Edge(k)\\  s\in \mathcal{L}_0\cap \Node(l)}}
a_{pkls} (W_{ll}'-W_{ll}+W_{ss}'-W_{ss})\\
&+\sum_{m=0}^{1}
\sum_{\substack{
\epsilon_{ks}\in \Edge(k)\\ s \in \mathcal{L}_m}}
a_{pkks}(W_{ss}'-W_{ss}).
\end{split}
\end{equation*}
The last equation can be equivalently rewritten as follows
\begin{equation*}
\begin{split}
(1+\sum_{\epsilon_{ks}\in \Edge(k)} a_{pkks})
(W_{kk}-W_{kk}')&=
\res_{pk}\sum_{m=0}^{1}\
\sum_{l\in\mathcal{L}_m \cap \Node(k)^-}(\Lambda_l'-\Lambda_l)
+\res_{pk}(\Lambda_k'-\Lambda_k)\\
&+\sum_{l\in \mathcal{L}_1 \cap\Node(k)^-}
(\sum_{\substack{ \epsilon_{ls}\in \Edge(k)\\  s\in \mathcal{L}_0\cap \Node(l)}}
a_{pkls}+a_{pkkl}) (W_{ll}'-W_{ll})\\
&+\sum_{l\in \mathcal{L}_1 \cap\Node(k)^-}
\sum_{\substack{ \epsilon_{ls}\in \Edge(k)\\  s\in \mathcal{L}_0\cap \Node(l)}}
a_{pkls}(W_{ss}'-W_{ss})\\
&+
\sum_{\substack{
\epsilon_{ks}\in \Edge(k)\\ s \in \mathcal{L}_0}}
a_{pkks}(W_{ss}'-W_{ss}).
\end{split}
\end{equation*}
Applying \eqref{eq:L1} in the last equation, we obtain the following relation
\begin{equation*}
\begin{split}
&(1+\sum_{\epsilon_{ks}\in \Edge(k)} a_{pkks})
(W_{kk}-W_{kk}')=
\res_{pk}(\Lambda_k'-\Lambda_k)
+\sum_{\substack{
\epsilon_{ks}\in \Edge(k)\\ s \in \mathcal{L}_0}}
(\res_{pk}-\res_{ks}a_{pkks})(\Lambda_{s}'-\Lambda_{s})
\\
&+\sum_{l\in\mathcal{L}_1 \cap \Node(k)^-}
\left(
\res_{pk}-
\res_{kl}(1+\sum_{s\in \mathcal{L}_0\cap\Node(l)^-} a_{klls})^{-1}
(\sum_{\substack{ \epsilon_{ls}\in \Edge(k)\\  s\in \mathcal{L}_0\cap \Node(l)}}
a_{pkls}+a_{pkkl})
\right)
(\Lambda_l'-\Lambda_l)\\
&+
\sum_{\substack{l\in \mathcal{L}_1 \cap\Node(k)^-\\
\epsilon_{ls}\in \Edge(l)\\ s \in \mathcal{L}_0}} \left(
\res_{pk}
-\res_{ls}a_{pkls}\right)
(\Lambda_{s}'-\Lambda_{s}).
\end{split}
\end{equation*}
Now, observe that using the assumption that $\frac{\res_{pk}}{\reac_{pk}}$ is the same for all edges, we have that  $\res_{pk}-\res_{ks}a_{pkks}=0$. Further, we have that
\begin{equation*}
\begin{split}
&\res_{pk}-
\res_{kl}(1+\sum_{s\in \mathcal{L}_0\cap\Node(l)^-} a_{klls})^{-1}
(\sum_{\substack{ \epsilon_{ls}\in \Edge(k)\\  s\in \mathcal{L}_0\cap \Node(l)}}
a_{pkls}+a_{pkkl})\\
&=(1+\sum_{s\in \mathcal{L}_0\cap\Node(l)^-} a_{klls})^{-1}
\left(
\res_{pk}
(1+\sum_{s\in \mathcal{L}_0\cap\Node(l)^-} a_{klls})-
\res_{kl}(\sum_{\substack{ \epsilon_{ls}\in \Edge(k)\\  s\in \mathcal{L}_0\cap \Node(l)}}
a_{pkls}+a_{pkkl})
\right)\\
&=(1+\sum_{s\in \mathcal{L}_0\cap\Node(l)^-} a_{klls})^{-1}
\left(
\res_{pk}-\res_{kl}a_{pkkl}+
\sum_{s\in \mathcal{L}_0\cap\Node(l)^-}
(\res_{pk} a_{klls}-\res_{kl}a_{pkls})
\right)=0.
\end{split}
\end{equation*}
Thus, recalling that
$\Lambda_k'-\Lambda_k\leq 0$ for any $k\in \Node$, we derive that
$W_{kk}-W_{kk}'\leq 0$ for $k \in \mathcal{L}_2$.
Suppose now that for all $k\in \mathcal{L}_j$, $j=0,\ldots,m$,
\begin{equation}\label{eq:induction}
(1+\sum_{\epsilon_{ks}\in \Edge(k)} a_{pkks})
(W_{kk}-W_{kk}')=\res_{pk}(\Lambda_k'-\Lambda_k).
\end{equation}
We shall show that the same holds for $k\in \mathcal{L}_{m+1}$. To this end, by \eqref{eq:FeasNet4}  and \eqref{eq:induction}, we have that
\begin{equation*}
\begin{split}
&(1+\sum_{\epsilon_{ks}\in \Edge(k)} a_{pkks})
(W_{kk}-W_{kk}')=
\res_{pk}(\Lambda_k'-\Lambda_k)\\
&+\sum_{j=0}^{m}
\sum_{
\substack{s\in\mathcal{L}_j \cap \Node(k)^-\\
l\in \bigcup_{b=j+1}^{m+1}\mathcal{L}_b}}
\Bigg(
\res_{pk}-
\res_{ls}(1+\sum_{\epsilon_{sf}\in \Edge(s)} a_{lssf})^{-1}
\Big(\sum_{\substack{ \epsilon_{sf}\in \Edge(s)\\
f\in \bigcup_{b=0}^{j-1}\mathcal{L}_b}}
a_{pksf}+a_{pkls}
\Big)\Bigg)
(\Lambda_s'-\Lambda_s).
\end{split}
\end{equation*}
Using again the assumption that $\frac{\res_{pk}}{\reac_{pk}}$ is the same for all lines, we obtain
$$
(1+\sum_{\epsilon_{ks}\in \Edge(k)} a_{pkks})
(W_{kk}-W_{kk}')=\res_{pk}(\Lambda_k'-\Lambda_k),
$$
for $k\in \mathcal{L}_{m+1}$.
Thus, $W_{kk}\leq W_{kk}'$ for any $k \in \mathcal{L}_{m}$, $0\leq  m\leq I'$ or
$k \in  \bigcup_{m=0}^{I'}\mathcal{L}_m=\Node$.
This concludes the proof.
\end{proof}

\begin{proof}[Proof of Theorem~\ref{Ch5Pr:continuous}]
We follow the argument in \cite[Lemma~7.1] {ReedZwart2014}.
 Take a sequence $\blt {z}^k\in (0,\infty)^{I\times J}$ such that
$\blt{z}^k\rightarrow \blt{z} $ as $k\rightarrow \infty$.
We proceed by contradiction.
Let us assume that $\blt{\Lambda}(\cdot)$ is not continuous at point $\blt{z}$. That is
$\blt{\Lambda}(\blt{z}^k) \rightarrow  \blt{\Lambda}'$ and
$\blt{\Lambda}'\neq \blt{\Lambda}(\blt{z})$.
 Note the limit $\blt{\Lambda}'$ exists as the sequence $\blt{\Lambda}(\blt{z}^k)$ lives in a subset of the compact set $\{\blt{\Lambda}\in [0,\infty)^{I\times J}: \blt{\Lambda}\leq \blt{M} \}$.
First, we show that $\blt{\Lambda}'$  is a feasible point of \eqref{ROPband}. As $\blt{\Lambda}(\blt{z}^k)$ is the optimal solution of \eqref{ROPband}, replacing $\blt{z}$ by
$\blt{z^k}$ we have that
$\sum_{j=1}^{J} \Lambda_{ij}(\blt{z}^k) \leq M_i$ and
$ 0 \leq \Lambda_{ij}(\blt{z}^k) \leq c^{max}_j z_{ij}^k $. Taking the limit as
$k\rightarrow \infty$, we derive
$\sum_{j=1}^{J} \Lambda_{ij}' \leq M_i$ and
$ 0 \leq \Lambda_{ij}' \leq c^{max}_j z_{ij} $. Further, we have that
$ W_{ii}(\blt{\Lambda}(\blt{z}^k)) \geq \underline{\upsilon}_i  $ and
$\blt{W}(\epsilon_{il},\blt{\Lambda}(\blt{z}^k)) \succeq  0,\ \epsilon_{il}\in \Edge$. The latter is equivalent to
$W_{ii}(\blt{\Lambda}(\blt{z}^k)) W_{ll}(\blt{\Lambda}(\blt{z}^k))-W_{il}(\blt{\Lambda}(\blt{z}^k))\geq 0$ (as we assume $\underline{ \upsilon}_i>0$). Now, by continuity  of the voltage magnitudes \cite[Theorem~3]{dvijotham2017high} we obtain
$  W_{ii}(\blt{\Lambda}')\geq \underline{ \upsilon}_i$
and
$W_{ii}(\blt{\Lambda}') W_{ll}(\blt{\Lambda}')-W_{il}(\blt{\Lambda}')^2\geq 0$. That is, $\blt{\Lambda}'$  is a feasible point of \eqref{ROPband}. Recalling that
$\blt{\Lambda}(\blt{z})$ is the optimal solution of \eqref{ROPband}, we have that
\begin{equation}\label{eq:contra}
\sum_{i=1}^{I} \sum_{j=1}^{J} z_{ij} u_{ij}\left(\blt {\Lambda}(\blt{z})/z_{ij}\right)
>
\sum_{i=1}^{I} \sum_{j=1}^{J} z_{ij} u_{ij}\left(\blt {\Lambda}'/z_{ij}\right).
\end{equation}

To derive the contradiction we construct a point $\blt{\Lambda}^k$ which is feasible for \eqref{ROPband} if we replace $\blt{z}$ by $\blt{z}^k$. To this end, define
for any $k\geq 1$,
\begin{equation*}
\Lambda_{ij}^{k}:=
    \Lambda_{ij}(\blt{z})\Min c^{max}_j z_{ij}^k.
\end{equation*}
We have that $\blt {\Lambda}^k\rightarrow \blt {\Lambda}(\blt{z})$ and
$\blt{\Lambda}^{k}\leq \blt{\Lambda}(\blt{z})$ for $k\geq k_0$. Observing that
$\Lambda_{ij}^k \leq c^{max}_j z_{ij}^k$, by Proposition~\ref{prop:feasible}, we have that $\blt {\Lambda}^k$ is a feasible point of \eqref{ROPband} by replacing $\blt{z}$ by $\blt{z}^k$ for $k\geq 1$. It follows that as $k\rightarrow \infty$,
\begin{equation*}
\sum_{i=1}^{I} \sum_{j=1}^{J} z_{ij}^k u_{ij}\left(\blt {\Lambda^k}/z_{ij}^k\right)
\rightarrow
 \sum_{i=1}^{I} \sum_{j=1}^{J} z_{ij} u_{ij}\left(\blt {\Lambda}(\blt{z})/z_{ij}\right)
\end{equation*}
and
\begin{equation*}
\sum_{i=1}^{I} \sum_{j=1}^{J} z_{ij} u_{ij}\left(\blt {\Lambda}(\blt{z}^k)/z_{ij}^k\right)
\rightarrow
\sum_{i=1}^{I} \sum_{j=1}^{J} z_{ij} u_{ij}\left(\blt {\Lambda}'/z_{ij}\right).
\end{equation*}
That is, by \eqref{eq:contra} there exists a sufficiently large $k$ such that
\begin{equation*}
\sum_{i=1}^{I} \sum_{j=1}^{J} z_{ij}^k u_{ij}\left(\blt {\Lambda^k}/z_{ij}^k\right)
>
\sum_{i=1}^{I} \sum_{j=1}^{J} z_{ij} u_{ij}\left(\blt {\Lambda}(\blt{z}^k)/z_{ij}^k\right).
\end{equation*}
The last inequality yields a contradiction as $\blt {\Lambda}(\blt{z}^k)$ is the optimal solution of
\eqref{ROPband} by replacing $\blt{z}$ by $\blt{z}^k$.
\end{proof}

\section{Proofs for Section~\ref{ch5: fluid model}}
\label{ch5:proofof Unique}
\begin{proof}[Proof of Proposition~\ref{ch5:analysisQ}.]
Using the identity $\Prob{D_{ij}<t}+\Prob{D_{ij}\geq t }=1$, \eqref{eq:qnet} can be written as
\begin{equation*}
\wbr{Q}_{ij}(t)=
 \wbr{Q}_{ij}(0) +\wbr{E}_{ij}(t)-\wbr{R}_{ij}(t) -\wbr{D}_{ij}(t),
\end{equation*}
where
\begin{equation}\label{eq:FLq2}
\begin{split}
\wbr{D}_{ij}(t):=
 \wbr{Q}_{ij}(0) \Prob{ D^0_{ij} < t }+
\int_{0}^{t}  \Prob{ D_{ij} < t-s } d\wbr{E}_{ij}(s)\\
-
\int_{0}^{t}  \Prob{ D_{ij} < t-s } d\wbr{R}_{ij}(s).
\end{split}
\end{equation}
In the sequel, we show that $\wbr{D}_{ij}(t)$ can be written as in \eqref{eq:depE}. By the definition of the fluid model, we have that
\begin{equation*}
\begin{split}
\wbr{Q}_{ij}(t)-\wbr{\Q}_{ij}(t)([\epsilon,\infty))&=
\wbr{Q}_{ij}(0)\left(\Prob{ D_{ij}^0 \geq t }-\Prob{ D_{ij} \in t+[\epsilon,\infty] }\right)\\
&\qquad+
\int_{0}^{t} \left(
 \Prob{ D_{ij} \geq t-s }-\Prob{ D_{ij}
\in t-s+[\epsilon,\infty] }\right)
 d\wbr{E}_{ij}(s)\\
&\qquad-
 \int_{0}^{t} \left(
 \Prob{ D_{ij} \geq t-s }-\Prob{D_{ij} \in t-s+[\epsilon,\infty] }\right)
 d\wbr{R}_{ij}(s).
\end{split}
\end{equation*}
Observing that $\Prob{ D_{ij} \in t+[\epsilon,\infty] }=
\Prob{ D_{ij} \geq t+\epsilon }$ and
\begin{equation*}
\Prob{ D_{ij} \geq t }-\Prob{ D_{ij} \geq t+\epsilon }=
\Prob{ t<D_{ij} < t +\epsilon},
\end{equation*}
we have that
\begin{equation*}
\begin{split}
\wbr{Q}_{ij}(t)-\wbr{\Q}_{ij}(t)([\epsilon,\infty))=
\wbr{Q}_{ij}(0)&\Prob{ t<D_{ij}^0 < t +\epsilon}\\
&+
\int_{0}^{t}
\Prob{ t-s<D_{ij} < t-s +\epsilon}
 d\wbr{E}_{ij}(s)\\
& -
 \int_{0}^{t}
 \Prob{ t-s<D_{ij} < t-s +\epsilon}
  d\wbr{R}_{ij}(s).
\end{split}
\end{equation*}
By the assumption of existence of the pdf $f_{D_{ij}}(\cdot)$, we have that
\begin{equation*}
\begin{split}
\wbr{Q}_{ij}(t)-\wbr{\Q}_{ij}(t)([\epsilon,\infty))=
\wbr{Q}_{ij}(0)\epsilon f_{D_{ij}^0}(t)+
\int_{0}^{t}
\epsilon f_{D_{ij}}(t-s)
 d\wbr{E}_{ij}(s)\\
 -
 \int_{0}^{t}
 \epsilon f_{D_{ij}}(t-s)
  d\wbr{R}_{ij}(s)+ o(\epsilon).
\end{split}
\end{equation*}
Dividing the last equation by $\epsilon$ and letting $\epsilon$ go to zero, 
we have that
\begin{equation}\label{eq:departurerate}
\begin{split}
\lim_{\epsilon \rightarrow 0}
\frac{\wbr{Q}_{ij}(t)-\wbr{\Q}_{ij}(t)([\epsilon,\infty))}{\epsilon}=
\wbr{Q}_{ij}(0) f_{D_{ij}^0}(t)+
\int_{0}^{t}
 f_{D_{ij}}(t-s)
 d\wbr{E}_{ij}(s)\\
 -
 \int_{0}^{t}
 f_{D_{ij}}(t-s)
  d\wbr{R}_{ij}(s).
\end{split}
\end{equation}
In other words, the limit of the left-hand side of \eqref{eq:departurerate} exists.  Integrating \eqref{eq:departurerate} from $0$ to $t$ and interchanging the integrals by using Tonelli's theorem \cite{rudin1987real}, we derive
\begin{align*}
\int_{0}^{t}
\lim_{\epsilon \rightarrow 0}
\frac{\wbr{Q}_{ij}(s)-\wbr{\Q}_{ij}(s)([\epsilon,\infty))}{\epsilon}ds=
 \wbr{Q}_{ij}(0) \Prob{ D^0_{ij} < t }+
\int_{0}^{t}  \Prob{ D_{ij} < t-s } d\wbr{E}_{ij}(s)\\
-\int_{0}^{t}  \Prob{ D_{ij} < t-s } d\wbr{R}_{ij}(s)=
\wbr{D}_{ij}(t).
\end{align*}
Furthermore, the following inequality holds for any $t\geq 0$,
\begin{align*}
\wbr{D}_{ij}(t)\leq
\int_{0}^{t}  \Prob{ D_{ij} < t-s } d\wbr{E}_{ij}(s)\leq \wbr{E}_{ij}(t)<\infty.
\end{align*}
 That is, $\wbr{D}_{ij}(t)$ represents the departure process which proves \eqref{eq:Departures} and  \eqref{eq:depE}.
\end{proof}
The first step to prove Theorem~\ref{Ch5thm:Uniqueness} is to show that the fluid model solutions are bounded away from zero. This is stated in the following proposition.
\begin{proposition}\label{ch5prop:positiveZ}
Under the assumptions of Theorem~\ref{Ch5thm:Uniqueness}, we have that
for any $\epsilon>0$,
$$\inf\limits_{t\geq \epsilon}\min\limits_{i,j}\wbr{Z}_{ij}(t)>0.$$
\end{proposition}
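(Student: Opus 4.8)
The plan is to bound $\wbr{Z}_{ij}(t)$ from below by the uncharged fluid that stems from type-$j$ EVs arriving at node $i$ during a short window $(t-\eta,t]$, and then to show that such a window always admits a definite amount of fluid. First I would regroup the arrival and rejection integrals in the $\wbr{Z}_{ij}$-equation into the \emph{accepted-fluid} term $d\wbr{A}_{ij}:=d\wbr{E}_{ij}-d\wbr{R}_{ij}$, obtaining
\begin{equation*}
\wbr{Z}_{ij}(t)=\wbr{Z}_{ij}(0)\,\Prob{B_{ij}^0\ge S_{ij}(\wbr{\blt{Z}},0,t),\,D_{ij}^0\ge t}+\int_0^t\Prob{B_{ij}\ge S_{ij}(\wbr{\blt{Z}},s,t),\,D_{ij}\ge t-s}\,d\wbr{A}_{ij}(s).
\end{equation*}
From the reflection characterization of $\wbr{Q}_i$ (Step~1 in the outline of the proof of Theorem~\ref{Ch5thm:Uniqueness}), $\wbr{R}_i$ increases only on $\{\wbr{Q}_i=K_i\}$ and with rate at most $\sum_h\lambda_{ih}$, so $d\wbr{R}_{ij}\le d\wbr{E}_{ij}$ and hence $d\wbr{A}_{ij}\ge0$. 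Since the integrand above is nonnegative and $S_{ij}(\wbr{\blt{Z}},s,t)\le c_j^{\max}(t-s)$ (because $0\le p_{ij}\le c_j^{\max}$), restricting the integral to $(t-\eta,t]$ yields, for every $\eta>0$,
\begin{equation*}
\wbr{Z}_{ij}(t)\ge\Prob{B_{ij}>c_j^{\max}\eta,\,D_{ij}>\eta}\,\bigl(\wbr{A}_{ij}(t)-\wbr{A}_{ij}((t-\eta)^+)\bigr).
\end{equation*}
Because $D_{ij}$ has a density (so $\Prob{D_{ij}>0}=1$) and $B_{ij}>0$ with positive probability, the prefactor is strictly positive once $\eta$ is small; I would fix $\eta:=\epsilon/2$.

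It then remains to bound $\wbr{A}_{ij}(t)-\wbr{A}_{ij}(t-\eta)$ away from zero, uniformly for $t\ge\epsilon$. Again from the reflection structure, $d\wbr{A}_{ij}(s)=\lambda_{ij}(s)\,ds$ on $\{\wbr{Q}_i(s)<K_i\}$, while on $\{\wbr{Q}_i(s)=K_i\}$ the identity $\wbr{Q}_i=\wbr{Q}_i(0)+\wbr{E}_i-\wbr{R}_i-\wbr{D}_i$ forces $d\wbr{A}_{ij}(s)=\tfrac{\lambda_{ij}(s)}{\sum_h\lambda_{ih}(s)}\,\wbr{D}_i'(s)\,ds$, where $\wbr{D}_i'=\sum_j\wbr{D}_{ij}'$ is the departure rate of Proposition~\ref{ch5:analysisQ}. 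Dropping the nonnegative initial term in \eqref{eq:departurerate} gives $\wbr{D}_i'(s)\ge\sum_j\int_0^sf_{D_{ij}}(s-u)\,d\wbr{A}_{ij}(u)\ge c_D\bigl(\wbr{A}_i(s)-\wbr{A}_i((s-\delta_0)^+)\bigr)$, with $c_D,\delta_0>0$ coming from $f_{D_{ij}}(0)>0$ and $\wbr{A}_i:=\sum_j\wbr{A}_{ij}$. Thus $\wbr{A}_i$ grows either at the positive arrival rate (node not full) or at a rate controlled by its own increments $\delta_0$ units of time earlier (node full). From here I would run a short iteration: $\wbr{A}_i$ cannot be constant on any interval of length $\delta_0$ inside $(0,T]$, since constancy there would force the node to be full with zero departures, hence $\wbr{A}_i$ constant on the preceding interval as well, and inductively constant back to $0$---contradicting $\wbr{Q}_i(0)<K_i$ (or the way full initial data is handled). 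Tracking the resulting constants produces $c>0$, depending only on $\epsilon$, $T$, the arrival rates, $c_D$, $\delta_0$, such that $\wbr{A}_{ij}(t)-\wbr{A}_{ij}(t-\eta)\ge c$ for all $t\in[\epsilon,T]$, and the proposition follows from the bound of the first paragraph.

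The main obstacle is this last step in the regime where node $i$ stays saturated over a long stretch of time: a priori nothing rules out a full node with zero departure rate, which would freeze $\wbr{A}_i$ and permit $\wbr{Z}_{ij}\equiv0$. The resolution is the positive-feedback mechanism above---a saturated node can only remain saturated by admitting fluid at exactly its departure rate, and the departure rate is bounded below by previously admitted fluid weighted by the deadline density near $0$, which is strictly positive by hypothesis; hence admitted fluid cannot stall. Making the constants uniform in $t$, and in particular excluding the degenerate situation of a node blocked from the outset by initial fluid whose residual parking times are arbitrarily large---which is precisely where one needs $\wbr{Q}_i(0)<K_i$ or a regularity assumption on the initial residual parking-time measures---is the delicate point of the argument.
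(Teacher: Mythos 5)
Your opening moves coincide with the paper's mechanism: the paper also bounds $\wbr{Z}_{ij}(t)$ from below by the fluid admitted in a short window times $\Prob{\tfrac{B_{ij}}{c_j^{\max}}\Min D_{ij}\geq t-s}$, and also distinguishes the regimes ``node not full'' (admitted rate $=\lambda_{ij}$) and ``node full'' (admitted rate $=$ departure rate). The genuine gap is exactly the point you flag as delicate, and it is not a removable technicality: your backward induction terminates in a contradiction with $\wbr{Q}_i(0)<K_i$, but Theorem~\ref{Ch5thm:Uniqueness} does not assume this. It explicitly allows $\wbr{Q}_i(0)=K_i$, and in that case assumes $\wbr{Q}_{ij}(0)>0$ for every $j$; that hypothesis exists precisely to settle the configuration you leave open. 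In the paper's proof this is the case $\tau=0$: when the node is saturated from the start, positivity of the admitted (equal to departure) rate comes from the initial-population term $\wbr{Q}_{ij}(0)f_{D_{ij}^0}(\cdot)$ in \eqref{eq:departurerate}, i.e., from $\wbr{Q}_{ij}(0)>0$ together with positivity of the deadline density near $0$ --- the very term your bound $\wbr{D}_i'(s)\geq c_D\bigl(\wbr{A}_i(s)-\wbr{A}_i((s-\delta_0)^+)\bigr)$ discards. As written, your argument proves nothing in the one scenario the theorem's extra hypothesis is designed for.

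A second problem concerns uniformity: the proposition asserts $\inf_{t\geq\epsilon}$ over an unbounded horizon, while your final constant is allowed to depend on $T$. On a long saturated stretch your only control is the delay inequality $\wbr{A}_i'(s)\geq c_D\bigl(\wbr{A}_i(s)-\wbr{A}_i(s-\delta_0)\bigr)$, which by itself does not prevent the window increments $\wbr{A}_i(t)-\wbr{A}_i(t-\eta)$ from decaying like $e^{-\alpha t}$ for small $\alpha$ (possible when $c_D\delta_0<1$), in which case the bound from your first paragraph degenerates as $t\to\infty$. Some additional input is needed to make the bound uniform in $t$: the paper's route is that after the hitting time $\tau$ the per-type mass stays at $\wbr{Q}_{ij}(\tau)>0$ and the departure rate $\delta_{ij}$ is bounded below by a constant rather than by a recent increment, giving $\wbr{Z}_{ij}(t)\geq\delta_{ij}\bigl((t-\tau)\Min\epsilon_1\bigr)/2$, which does not deteriorate for large $t$; alternatively one can argue from $\wbr{Q}_i\equiv K_i$, the measure equation \eqref{ch5eq:FLQ} and $\E{D_{ij}}<\infty$ that admitted increments cannot vanish. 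Two minor points: $f_{D_{ij}}(0)>0$ alone does not give $f_{D_{ij}}\geq c_D$ on $[0,\delta_0]$ without some regularity of the density at $0$ (the paper is equally casual here), and the identity $d\wbr{A}_{ij}=\tfrac{\lambda_{ij}}{\sum_h\lambda_{ih}}\,d\wbr{D}_i$ on saturated intervals is correct but should be derived explicitly from the reflection characterization you invoke.
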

\begin{proof}
Recall that an assumption of Theorem~\ref{Ch5thm:Uniqueness} is that $Q_{ij}(0)>0$ if $Q_{i}(0)=K_i$. Further, by our assumptions there exists the probability density function of parking times and $f_{D_{ij}}(0)>0$ for any $i,j\geq 1$. It is enough to show that $\wbr{Z}_{ij}(\cdot)$ remains positive when the system  is not full.
Assume that $\wbr{\blt{Z}}(0)=0$ and
define
$\tau=:\{s>0: \wbr{Q}_{i}(0)=K_i \}$, where $\tau\in[0,\infty]$.
Note that
$\Prob{\frac{B_{ij}}{c_j^{max}}
\Min D_{ij}\geq s}\rightarrow
\Prob{\frac{B_{ij}}{c_j^{max}}
\Min D_{ij}\geq 0}=
 1$ as $s \rightarrow 0$ and choose $\epsilon_1$ such that
$\Prob{\frac{B_{ij}}{c_j^{max}}
\Min D_{ij}\geq s}\geq \frac{1}{2}$ for $s\in[0,\epsilon_1]$.
For $t\leq \tau$, we have that
\begin{equation*}
\begin{split}
\wbr{Z}_{ij}(t)&\geq
\int_{0}^{t} \lambda(s) \Prob{\frac{B_{ij}}{c_j^{max}}
\Min D_{ij}\geq t-s}ds\\
&=\int_{0}^{t} \lambda_{ij}(t-s) \Prob{\frac{B_{ij}}{c_j^{max}}
\Min D_{ij}\geq s}ds\\
&=\inf_{0<s\leq \epsilon}\lambda_{ij}(s)
\int_{0}^{\epsilon}
\Prob{\frac{B_{ij}}{c_j^{max}}
\Min D_{ij}\geq s}ds\\
&\geq
\inf_{0<s\leq \epsilon}\lambda_{ij}(s)
\frac{\epsilon\Min \epsilon_1}{2}>0,
\end{split}
\end{equation*}
and this covers also the case that $\tau=\infty$.
Note that if the arrival rate is constant then the last bound coincides with the one in \cite[Lemma~3]{remerova14}.
Now, for $t>\tau$, we have that $\wbr{Q}_{i}(t)=K_i$ and by the continuity of the fluid model solutions, we have that   $\wbr{Q}_{ij}(t)=\wbr{Q}_{ij}(\tau)$. Further, by \eqref{eq:Departures}, we have that
\begin{equation*}
\wbr{E}_{ij}(t)-\wbr{R}_{ij}(t)=\wbr{D}_{ij}(t)-\wbr{D}_{ij}(\tau)
+\wbr{E}_{ij}(\tau),
\end{equation*}
and using \eqref{eq:depE}, we obtain
\begin{equation*}
\begin{split}
\wbr{Z}_{ij}(t)&\geq
\int_{\tau}^{t} \delta_{ij}(s) \Prob{\frac{B_{ij}}{c_j^{max}}
\Min D_{ij}\geq t-s}ds,
\end{split}
\end{equation*}
where we define
$\delta_{ij}(s):=\lim\limits_{\epsilon\rightarrow 0}
\frac{\wbr{Q}_{ij}(s)-\wbr{\Q}_{ij}(s)\left([\epsilon,\infty)\right)}
{\epsilon}$. By the fact that
$\wbr{Q}_{ij}(t)=\wbr{Q}_{ij}(\tau)>0$ for $t>\tau$ (this also covers the case $\tau=0$), we have that $\delta_{ij}(t)=\delta_{ij}(\tau)=\delta_{ij}$. Further by the assumption $f_{D_{ij}}(0)>0$, \eqref{eq:departurerate}, and the fact that $\wbr{R}_{ij}(s)=0$ for $s\leq \tau$ we have that $\delta_{ij}(\tau)>0$.
Hence,
\begin{equation*}
\begin{split}
\wbr{Z}_{ij}(t)&\geq
\delta_{ij}\int_{\tau}^{t} \Prob{\frac{B_{ij}}{c_j^{max}}
\Min D_{ij}\geq t-s}ds\\
&=\delta_{ij}\int_{0}^{t-\tau} \Prob{\frac{B_{ij}}{c_j^{max}}
\Min D_{ij}\geq s}ds\\
&\geq \delta_{ij} \frac{(t-\tau)\Min \epsilon_1}{2}>0.
\end{split}
\end{equation*}
\end{proof}

\begin{proof}[Proof of Theorem~\ref{Ch5thm:Uniqueness}.]
We first show that each pair $(K_{i}-\wbr{Q}_{i}(\cdot),\wbr{R}_{i}(\cdot))$ is unique for any $i\geq 1$.
Note that by Remark~\ref{ch5:remark1}, fluid model solutions are invariant with respect to time shifts, and hence it suffices to show that $(K_{i}-\wbr{Q}_{i}(\cdot),\wbr{R}_{i}(\cdot))$ is unique on the time interval $[0,T]$ for $T>0$.

By Proposition~\ref{ch5:analysisQ}, we have that
\begin{equation}\label{ch5:key}
K_{i}-\wbr{Q}_{i}(t)=
K_{i}-\wbr{Q}_{i}(0)
-\sum_{j=1}^{J}\wbr{E}_{ij}(t)+\sum_{j=1}^{J}\wbr{D}_{ij}(t)
+\wbr{R}_{i}(t),
\end{equation}
where
$\wbr{R}_{i}(t)=
\int_{0}^{t} \ind{\wbr{Q}_{i}(s)=K_{i}}   d\wbr{R}_{i}(s)=
\int_{0}^{t} \ind{K_{i}-\wbr{Q}_{i}(s)=0}   d\wbr{R}_{i}(s)$.
Now, by the one-dimensional reflection mapping \cite[Chapter~6]{chen2001fundamentals}, we have that
\begin{equation}\label{ch5:key2}
K_{i}-\wbr{Q}_{i}(t)=\Psi(\Phi_i)(t):=\Phi_i(t)
+\sup_{0\leq s\leq t }(-\Phi_i(s)\Max 0),
\end{equation}
where
\begin{equation*}
\Phi_i(t):=
K_{i}-\wbr{Q}_{i}(0)
-\sum_{j=1}^{J}\wbr{E}_{ij}(t)+\sum_{j=1}^{J}\wbr{D}_{ij}(t).
\end{equation*}
It is known that the reflection mapping $\Psi(\cdot)$ is Lipschitz continuous \cite{chen2001fundamentals}.
Now, for each $i\geq 1$, define the mapping $B_i$ for each function $a(\cdot)$ on $[0,\infty)$,
\begin{equation*}
\begin{split}
B_i(a)(t)=\zeta_i(t)-
\sum_{j=1}^{J}\int_{0}^{t}\frac{\lambda_{ij}(s)}
{\sum_{j=1}^{J}\lambda_{ij}(s)}
a(s)f_{D_{ij}}(t-s)ds\\+
\sum_{j=1}^{J}\int_{0}^{t}\int_{0}^{s} a(u)
d\frac{\lambda_{ij}(u)}{\sum_{j=1}^{J}\lambda_{ij}(u)}
f_{D_{ij}}(t-s) ds,
\end{split}
\end{equation*}
where
\begin{equation*}
\zeta_i(t)=K_i-\wbr{Q}_{i}(0)+
\sum_{j=1}^{J}\wbr{Q}_{ij}(0) \Prob{ D^0_{ij} < t }-\sum_{j=1}^{J}\wbr{E}_{ij}(t)+
\sum_{j=1}^{J}\int_{0}^{t}\wbr{E}_{ij}(u)f_{D_{ij}}(t-u)du.
\end{equation*}
Observing that $\frac{\lambda_{ij}(\cdot)}{\sum_{h=1}^{J}\lambda_{ih}(\cdot)}\leq 1$, we have that the mapping $B_i(\cdot)$ is locally Lipschitz continuous for any $i\geq 1$, namely
\begin{equation*}
\sup_{0\leq t \leq T}|B_i(a_1)(t)- B_i(a_2)(t)|
\leq 2\sum_{j=1}^{J}\Prob{D_{ij}\leq T}
\sup_{0\leq t \leq T}|a_1(t)- a_2(t)|.
\end{equation*}
By \cite[Lemma~3]{kang2015}, the following functional equation for any $i\geq 1$ has a unique solution on $[0,T]$:
\begin{equation}\label{ch5:functional}
\begin{split}
a(t)=\Psi(B_i(a))(t)-B_i(a)(t).
\end{split}
\end{equation}
The main idea now is to show that each function $R_i(\cdot)$ satisfies \eqref{ch5:functional}, and hence it is unique.
To this end, by the proof of Proposition~\ref{ch5:analysisQ}, the relation $\wbr{R}_{ij}(t)=\int_{0}^{t} \frac{\lambda_{ij}(s)}{\sum_{h=1}^{J}\lambda_{ih}(s)}d\wbr{R}_{i}(s)$, and the properties of the Riemann-Stieltjes integral, we obtain
\begin{align*}
\wbr{D}_{ij}(t)&= \wbr{Q}_{ij}(0) \Prob{ D^0_{ij} < t }+
\int_{0}^{t}  \Prob{ D_{ij} < t-s } d\wbr{E}_{ij}(s)
-\int_{0}^{t}  \Prob{ D_{ij} < t-s } d\wbr{R}_{ij}(s)\\
&=\wbr{Q}_{ij}(0) \Prob{ D^0_{ij} < t }+
\int_{0}^{t} \wbr{E}_{ij}(s) f_{D_{ij}}(t-u) ds
-\int_{0}^{t} \wbr{R}_{ij}(s) f_{D_{ij}}(t-u) ds
\end{align*}
and
\begin{align*}
\int_{0}^{t} \wbr{R}_{ij}(s) f_{D_{ij}}(t-u) ds&=
\int_{0}^{t} \int_{0}^{s}
\frac{\lambda_{ij}(u)}{\sum_{h=1}^{J}\lambda_{ih}(u)}
d\wbr{R}_{i}(u)
f_{D_{ij}}(t-s)ds\\
&=\int_{0}^{t}\frac{\lambda_{ij}(s)}
{\sum_{h=1}^{J}\lambda_{ih}(s)}
\wbr{R}_{i}(s) f_{D_{ij}}(t-s)ds\\
&\qquad -
\int_{0}^{t}\int_{0}^{s} \wbr{R}_{i}(u)
d\frac{\lambda_{ij}(u)}{\sum_{h=1}^{J}\lambda_{ih}(u)}
f_{D_{ij}}(t-s) ds.
\end{align*}
Using the last equation and replacing $\wbr{D}_{ij}(t)$ in \eqref{ch5:key}, we have that
\begin{align*}
K_{i}-\wbr{Q}_{i}(t)=& \zeta_i(t)
-\sum_{j=1}^{J}\int_{0}^{t}\frac{\lambda_{ij}(s)}
{\sum_{h=1}^{J}\lambda_{ih}(s)}
\wbr{R}_{i}(s) f_{D_{ij}}(t-s)ds\\
&+
\sum_{j=1}^{J}\int_{0}^{t}\int_{0}^{s} \wbr{R}_{i}(u)
d\frac{\lambda_{ij}(u)}{\sum_{h=1}^{J}\lambda_{ih}(u)}
f_{D_{ij}}(t-s) ds + \wbr{R}_{i}(t)\\
=& B_i(\wbr{R}_{i})(t)+\wbr{R}_{i}(t).
\end{align*}
Using again the reflection mapping, we obtain
\begin{equation*}
 K_{i}-\wbr{Q}_{i}(t)=\Psi(B_i(\wbr{R}_{i}))(t).
\end{equation*}
The last equation and \eqref{ch5:key2} yield
\begin{equation}\label{ch5:key3}
 \Phi_i(t)=B_i(\wbr{R}_{i})(t).
\end{equation}
Combining \eqref{ch5:key} and \eqref{ch5:key2}, we derive
\begin{equation*}
 \wbr{R}_{i}(t)=\Psi(\Phi_i)(t)-\Phi_i(t).
\end{equation*}
Now, replacing $\Phi_i(\cdot)$  in the last equation by the right hand side of \eqref{ch5:key3} leads to
\begin{equation*}
 \wbr{R}_{i}(t)=\Psi(B_i(\wbr{R}_{i}))(t)-B_i(\wbr{R}_{i})(t).
\end{equation*}
Thus, $\wbr{R}_{i}(\cdot)$ is a solution of \eqref{ch5:functional}, and hence unique. This implies that
$\wbr{R}_{ij}(\cdot)$ is unique for any $i,j\geq 1$ and hence, $(\wbr{\Q}_{ij}(\cdot),\wbr{Q}_{ij}(\cdot))$ is unique for $i,j\geq 1$.

We now proceed to show the uniqueness of the $\wbr{Z}_{ij}(\cdot)$. First, we show that
$\wbr{\Z}_{ij}(\cdot)$ has a Lipschitz continuous first projection. Indeed,
let $x<x'$ and $y\geq 0$. For any $i,j\geq 0$, we have that
\begin{align*}
 \wbr{\Z}_{ij}(t)\left(
 [x,x']\times [y,\infty)
 \right)
 \leq
 \wbr{\Z}_{ij}(0)\left(
 [x+S_{ij}(\blt{Z},0,t),x'+S_{ij}(\blt{Z},0,t)]\times [y,\infty)\right)\\
 + \int_{0}^{t} \Prob{x+S_{ij}(\blt{Z},s,t)\leq B_{ij}
 \leq x'+S_{ij}(\blt{Z},s,t)}d\wbr{E}_{ij}(s).
\end{align*}
By the Lipschitz continuity of $\wbr{E}_{ij}(\cdot)$, the previous bound becomes
\begin{align*}
 \wbr{\Z}_{ij}(t)\left(
 [x,x']\times [y,\infty)
 \right)
 \leq
 \wbr{\Z}_{ij}(0)\left(
 [x+S_{ij}(\blt{Z},0,t),x'+S_{ij}(\blt{Z},0,t)]\times [y,\infty)\right)\\
 + \eta_{ij} \int_{0}^{t} \Prob{x+S_{ij}(\blt{Z},s,t)\leq B_{ij}
 \leq x'+S_{ij}(\blt{Z},s,t)}ds.
\end{align*}
By the assumption of the Lipschitz continuity  of the initial condition, the change of variable
$v=\Theta(s)=S_{ij}(\blt{Z},s,t)$, and \cite[Lemma~5]{remerova14}, we have that
\begin{align*}
 \wbr{\Z}_{ij}(t)\left(
 [x,x']\times [y,\infty)
 \right)
 &\leq
 L(x'-x)
 + \eta_{ij} \int_{0}^{S_{ij}(\blt{Z},s,t)}
 \frac{\Prob{x+v\leq B_{ij}
 \leq x'+v}}
 {p_{ij}\left(\blt{Z}(\Theta^{-1}(s))\right)} ds\\
 &\leq \left(L+\|\blt{\eta}\| \sup_{0\leq s\leq t}
 \frac{1}{\blt{Z}(s)}\right)(x'-x).
\end{align*}
That is, the first projection of $\wbr{\Z}_{ij}(\cdot)$ is Lipschitz continuous with constant
\begin{equation*}
L+\|\blt{\eta}\| \sup_{0\leq s\leq t}
 \frac{1}{\blt{Z}(s)}< \infty,
\end{equation*}
where the last inequality follows by Theorem~\ref{Ch5Pr:continuous}. Note now that point $\blt{0}$ is a feasible point of \eqref{OP}. Further, for a vector $\blt{z}$ such that $z_{ij}$ is small enough the power flow constraints are satisfied and hence $p_{ij}(\blt{z})=c_j^{max}$. Moreover, the power allocation function is Lipschitz continuous since we consider the linearized Distflow power flow model as we discussed in Section~\ref{ch5:Perturbation}.
Now, by the Lipschitz continuity of $\wbr{E}_{ij}(\cdot)$ and by applying \cite[Theorem~1]{remerova14}, we obtain that the fluid model solution $(\blt{\Z}(\cdot), \blt{Z}(\cdot))$ is unique.
 \end{proof}

\section{Proof of fluid limit Theorem~\ref{Ch5thm:fluid limit}
}
\label{Ch5:proof fluid lilit}
\subsection{Establishing tightness}
The first step of the proof of Theorem~\ref{Ch5thm:fluid limit} is to show that $\left(\blt{\wbr{\Q}}^n(\cdot),
\blt{\wbr{\Z}}^n(\cdot)\right)$ is C-tight, i.e., tight with continuous weak limits. To do so, we follow the idea of proof of \cite[Theorem~5]{remerova14}.
First, we show that both processes satisfy the compact containment property.
To this end, note that the following bounds hold almost surely
\begin{equation}\label{eq:BQ}
\Q_{ij}(t)\leq\sum_{l=1}^{Q_{ij}(0)}
\delta^{+}_{D_{ijl}^0(t) }
+\sum_{l=1}^{E_{ij}(t)} \delta^{+}_{ D_{ijl}(t) }
\end{equation}
and
\begin{equation}\label{eq:BZ}
\Z_{ij}(t)\leq\sum_{l=1}^{Z_{ij}(0)}
\delta^{+}_{\left(B_{ijl}^0(t), D_{ijl}^0(t) \right)}
+\sum_{l=1}^{E_{ij}(t)} \delta^{+}_{\left(B_{ijl}(t), D_{ijl}(t) \right)}.
\end{equation}
Moreover, by our assumptions,
$\frac{E_{ij}^n(\cdot)}{n}\overset{d}\rightarrow \wbr{E}_{ij}(\cdot)$.
Hence, all the bounds
in \cite[Lemma~9]{remerova14} hold true for the measure-valued processes  $\Q_{ij}(\cdot)$ and $\Z_{ij}(\cdot)$.
That is, for any $T>0$ and $\epsilon>0$, there exist  compact sets
$C\in \mathcal{M}(\R_+)^{I\times J}$ and
$C'\in \mathcal{M}(\R_+^2)^{I\times J}$ such that
\begin{equation}\label{compactQ}
 \liminf_{n \rightarrow \infty}
\Probn{\blt{\wbr{\Q}}^n(t) \in C\ \forall\ t\in [0,T]}
\geq 1-\epsilon,
\end{equation}
and
\begin{equation}\label{compactZ}
 \liminf_{n \rightarrow \infty}
\Probn{\blt{\wbr{\Z}}^n(t) \in C'\ \forall\ t\in [0,T]}
\geq 1-\epsilon.
\end{equation}

Next, we shall show the oscillation control. To do so, we first show a preliminary result. Define $H^b_{a}:=\R_+\times [a,b]$ and $V^b_a:=[a,b] \times \R_+$. If $b=\infty$, then $H^{\infty}_{a}:=\R_+\times [a,\infty)$ and $V^b_{a}:=[a,\infty) \times \R_+$.

\begin{proposition}\label{prop:boundary}
For any $T>0$, $\delta>0$, and $\epsilon>0$, there exist $\alpha>0$ and $b>0$ such that
\begin{equation*}
 \liminf_{n \rightarrow \infty}
\Probn{\sup_{0\leq t\leq T}\sup_{x\in \R_+}\left(
||\blt{\wbr{\Q}}^n(t)([x,x+\alpha])|| \right)
\leq \delta}
\geq 1-\epsilon
\end{equation*}
and
\begin{equation*}
 \liminf_{n \rightarrow \infty}
\Probn{\sup_{0\leq t\leq T}\sup_{x\in \R_+}\left(
||\blt{\wbr{\Z}}^n(t)(H^{x+b}_x)|| \Max
||\blt{\wbr{\Z}}^n(t)(V^{x+b}_x)|| \right)
\leq \delta}
\geq 1-\epsilon.
\end{equation*}
\end{proposition}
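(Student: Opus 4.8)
The plan is to dominate the measure-valued quantities pathwise by the dynamics-free counting objects appearing on the right of \eqref{eq:BQ}--\eqref{eq:BZ}, and then to control those by separating the contribution of the initial population from that of the arriving vehicles, treating each of the finitely many pairs $(i,j)$ on its own and taking a union bound at the end. Concretely: by \eqref{eq:BQ}, for $x>0$ one has $\wbr{\Q}^{n}_{ij}(t)([x,x+\alpha])\le \tfrac1n\sum_{l=1}^{Q_{ij}^{n}(0)}\ind{D_{ijl}^{0}\in[x+t,x+t+\alpha]}+\tfrac1n\sum_{l=1}^{E_{ij}^{n}(t)}\ind{\zeta_{ijl}+D_{ijl}\in[x+t,x+t+\alpha]}$ (at $x=0$ the restriction of $\delta^{+}$ to $(0,\infty)$ reduces the strip to $(0,\alpha]$, so the same bound applies with the strip $(t,t+\alpha]$); by \eqref{eq:BZ} the $H^{x+b}_{x}$-mass of $\wbr{\Z}^{n}_{ij}(t)$ is bounded by the same expression with $\alpha$ replaced by $b$, while its $V^{x+b}_{x}$-mass is bounded by the analogous sums with $D$ replaced by $B$ and each strip location $x$ shifted by $S_{ij}^{n}(\blt{Z}^{n},\cdot,t)\in[0,c_{j}^{\text{max}}T]$. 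Taking suprema over $0\le t\le T$ and $x\ge0$, and using $E_{ij}^{n}(t)\le E_{ij}^{n}(T)$ together with the fact that $x+t$ sweeps out all of $\R_{+}$, each arrival sum is dominated by its counterpart with summation index running to $E_{ij}^{n}(T)$ and strip location ranging freely over $\R_{+}$, and similarly for the initial sums; so it suffices to show these dynamics-free suprema fall below any prescribed $\eta>0$ with probability tending to $1$, provided $\alpha$ and $b$ are small.

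\emph{Initial population.} The scaled initial empirical measures $\wbr{\Q}^{n}_{ij}(0)$ and the two projections of $\wbr{\Z}^{n}_{ij}(0)$ converge weakly (our fourth assumption) to limits that are almost surely atom-free (our fifth assumption). Since weak convergence to a measure with continuous distribution function forces uniform convergence of the distribution functions, $\sup_{y\ge0}\wbr{\Q}^{n}_{ij}(0)([y,y+\alpha])$ converges in distribution to a random quantity bounded by the modulus of continuity of the limiting (uniformly continuous) distribution function, which tends to $0$ almost surely as $\alpha\downarrow0$; the same holds for the $B$- and $D$-projections of $\wbr{\Z}^{n}_{ij}(0)$, the common shift $S_{ij}^{n}(\blt{Z}^{n},0,t)$ being irrelevant once the supremum over strip locations is taken. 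Hence, choosing $\alpha$ and $b$ small enough makes all three initial contributions at most $\eta$ with high probability (after restricting the strip location to a compact set, using the tail bound underlying \eqref{compactQ}).

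\emph{Arrivals, deadline direction.} A type-$(i,j)$ vehicle arriving at $\zeta_{ijl}$ with deadline $D_{ijl}$ has residual deadline in $[x,x+\alpha]$ at time $t$ exactly when its expiry time $\zeta_{ijl}+D_{ijl}$ lies in $[x+t,x+t+\alpha]$; by the reduction it therefore suffices to bound $\sup_{y\ge0}\tfrac1n\sum_{l=1}^{E_{ij}^{n}(T)}\ind{\zeta_{ijl}+D_{ijl}\in[y,y+\alpha]}$. Restricting $y$ to a compact interval (since $\tfrac1n\#\{l\le E_{ij}^{n}(T):D_{ijl}>y_{0}-T\}\to\wbr{E}_{ij}(T)\Prob{D_{ij}>y_{0}-T}\to0$), discretizing it into an $O(1/\alpha)$-grid, and applying a Bernstein/Hoeffding bound on each cell — conditionally on the arrival epochs the summands are independent Bernoullis whose total mean is at most $E_{ij}^{n}(T)\sup_{z}\Prob{D_{ij}\in[z,z+\alpha]}$, and $E_{ij}^{n}(T)/n\to\wbr{E}_{ij}(T)<\infty$ — together with $\sup_{z}\Prob{D_{ij}\in[z,z+\alpha]}\downarrow0$ (existence of the density $f_{D_{ij}}$), a union bound over the grid closes this case. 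The $H^{x+b}_{x}$-estimate for $\wbr{\Z}^{n}_{ij}$ is identical with $b$ in place of $\alpha$.

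\emph{Arrivals, service direction — the main obstacle.} Here the per-vehicle shift $S_{ij}^{n}(\blt{Z}^{n},\zeta_{ijl},t)$ is \emph{not} common across vehicles, which is the genuinely new point: the residual service of arriving vehicles is entangled with the allocation dynamics that we would like to average over. The resolution is that, as a function of the arrival epoch, $S_{ij}^{n}(\blt{Z}^{n},\cdot,t)$ is nonincreasing with Lipschitz constant $c_{j}^{\text{max}}$. Partition $[0,T]$ into $O(1/b)$ arrival-epoch blocks of length $b/c_{j}^{\text{max}}$; within a block the shift varies by at most $b$, so membership of $B_{ijl}(t)$ in a width-$b$ service strip forces $B_{ijl}$ into an interval of width $2b$ whose location is measurable with respect to the time-$t$ history, and each block contains at most $n\eta_{ij}b/c_{j}^{\text{max}}+1$ arrivals by our third assumption. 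Summing the $O(1/b)$ per-block counts, the deterministic part is $O(n)\cdot\sup_{z}\Prob{B_{ij}\in[z,z+2b]}$, which vanishes as $b\downarrow0$ when the marginal of $B_{ij}$ is atom-free — and even if it has an atom, the strict monotonicity of the shift within a block redistributes that atom over an interval of width $O(b)$, so the bound still vanishes; this is the change-of-variables device of \cite[Lemma~5]{remerova14}. Fluctuations are handled exactly as in the deadline direction. Finally, choosing $\alpha$ and $b$ small, then $n$ large, and taking a union bound over the finitely many $(i,j)$ delivers the proposition; the crux remains this last step, where the mesoscopic block decomposition freezes the cumulative allocation up to $o(1)$ and thereby decouples the residual service of arrivals from the dynamics.
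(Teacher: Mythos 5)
Your reduction to the loss-free ("infinite-server") bounds via \eqref{eq:BQ}--\eqref{eq:BZ}, your treatment of the initial population through the atom-free weak limits, and your deadline-direction argument (which only needs the density of $D_{ij}$, or even just the unit drift of residual deadlines) all match the paper's strategy, which likewise first passes to $\blt{\Q}^{n,\infty}$, $\blt{\Z}^{n,\infty}$ and handles time $0$ by \cite[Lemma~10]{remerova14}. The problem is in the step you yourself call the crux: the service (charging-requirement) direction for $\blt{\wbr{\Z}}^n$. Your bound there is of the form $O(1)\cdot\sup_{z}\Prob{B_{ij}\in[z,z+2b]}$, which vanishes as $b\downarrow 0$ only if the law of $B_{ij}$ has no atoms. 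The paper makes no such assumption: the atom-freeness in Assumption~5 concerns only the initial measures, not $F_{ij}$. The patch you offer --- that ``the strict monotonicity of the shift within a block redistributes the atom over an interval of width $O(b)$'' --- does not work. Within one of your blocks the cumulative allocation $S_{ij}^n(\blt{Z}^n,\cdot,t)$ varies by at most $b$ by construction, so an atom of $B_{ij}$ is spread over a window of width at most $b$, i.e.\ comparable to the strip itself, and every atom-vehicle of that block can sit in a single strip $[x,x+b]$. Worse, across blocks the shift can be nearly constant over long stretches whenever the per-vehicle rate $p_{ij}$ is small (nothing in your argument rules this out), so atom-mass from many blocks can pile up in one width-$b$ strip, and $\sup_x\wbr{\Z}^n_{ij}(t)(V^{x+b}_x)$ need not become small as $b\downarrow 0$ along your route.

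What is missing is precisely the device the paper imports from \cite[Lemma~11]{remerova14}: a strictly positive lower bound on the per-vehicle rate, $p_*:=\min_{i,j}\{p_{ij}:\ z_{ij}>\delta/4,\ \|\blt z\|\leq \Xi'\}>0$, obtained from continuity of the allocation together with the compact-containment constant $\Xi'$, and the choice $b=\beta(p_*\Min 1)/3$ with $\beta\leq \delta/(8\|\blt\eta\|)$. One then argues by dichotomy: either $\wbr{Z}^n_{ij}$ dipped below $\delta/4$ during the recent window of length $\beta$, in which case the current strip mass is at most $\delta/4$ plus the recent arrivals (at most roughly $\|\blt\eta\|\beta\leq\delta/8$ after scaling), or the rate stayed above $p_*$, in which case residual requirements sweep leftwards fast enough that only arrivals from a short time window (equivalently, load-measure mass of a fixed $b$-grid box, controlled uniformly by the Glivenko--Cantelli-type estimate of \cite[Theorem~5.1]{gromoll2009}) can occupy the strip. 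This argument is insensitive to atoms of $B_{ij}$ (and of $D_{ij}$), whereas your block decomposition, as written, is not; without establishing a bound of the type $p_*>0$ and using it in place of $\sup_z\Prob{B_{ij}\in[z,z+2b]}$, the second display of Proposition~\ref{prop:boundary} is not proved in the generality claimed. (A minor, fixable point: the per-block arrival count $n\eta_{ij}b/c_j^{\max}+1$ does not follow from Assumption~3 alone; you need the functional convergence $\wbr{\blt E}^n\rightarrow\wbr{\blt E}$ of Assumption~2, uniformly over the $O(1/b)$ blocks.)
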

\begin{proof}
By \cite[Lemma~10]{remerova14}, we have that there exist $\alpha>0$ and $b>0$ such that
\begin{equation}\label{eqQ}
 \liminf_{n \rightarrow \infty}
\Probn{\sup_{x\in \R_+}\left(
||\blt{\wbr{\Q}}^n(0)([x,x+\alpha])||\right)
\leq \delta}
\geq 1-\epsilon
\end{equation}
and
\begin{equation}\label{eqZ}
 \liminf_{n \rightarrow \infty}
\Probn{\sup_{x\in \R_+}\left(
||\blt{\wbr{\Z}}^n(0)(H^{x+b}_x)|| \Max
||\blt{\wbr{\Z}}^n(0)(V^{x+b}_x)|| \right)
\leq \delta}
\geq 1-\epsilon.
\end{equation}
Next, define
\begin{equation*}
\Q_{ij}^{\infty}(t):=\sum_{l=1}^{E_{ij}(t)} \delta^{+}_{ D_{ijl}(t) },\
\Z_{ij}^{\infty}(t):=\sum_{l=1}^{E_{ij}(t)} \delta^{+}_{\left(B_{ijl}(t), D_{ijl}(t) \right)}.
\end{equation*}
We shall show that
\begin{equation}\label{eq:Qbound}
 \liminf_{n \rightarrow \infty}
\Probn{\sup_{0\leq t\leq T}\sup_{x\in \R_+}\left(
||\blt{\wbr{\Q}}^{n,\infty}(t)([x,x+\alpha])|| \right)
\leq \delta}
\geq 1-\epsilon
\end{equation}
and
\begin{equation}\label{eq:Zbound}
 \liminf_{n \rightarrow \infty}
\Probn{\sup_{0\leq t\leq T}\sup_{x\in \R_+}\left(
||\blt{\wbr{\Z}}^{n,\infty}(t)(H^{x+b}_x)|| \Max
||\blt{\wbr{\Z}}^{n,\infty}(t)(V^{x+b}_x)|| \right)
\leq \delta}
\geq 1-\epsilon.
\end{equation}
Then, the result follows. Indeed,
by \eqref{eq:BQ}, \eqref{eq:BZ}, we have that
\begin{equation*}
\begin{split}
\liminf_{n \rightarrow \infty}\ &
\Probn{\sup_{0\leq t\leq T}\sup_{x\in \R_+}\left(
\|\blt{\wbr{\Q}}^{n}(t)([x,x+\alpha])\| \right)
\leq \delta}\\
&\geq
 \liminf_{n \rightarrow \infty}
\Probn{\sup_{0\leq t\leq T}\sup_{x\in \R_+}\left(
\|\blt{\wbr{\Q}}^{n,\infty}(t)([x,x+\alpha])\| \right)
\leq \delta}
\end{split}
\end{equation*}
and
\begin{equation*}
\begin{split}
\liminf_{n \rightarrow \infty}\ &
\Probn{\sup_{0\leq t\leq T}\sup_{x\in \R_+}\left(
\|\blt{\wbr{\Z}}^{n}(t)(H^{x+b}_x)\| \Max
\|\blt{\wbr{\Z}}^{n}(t)(V^{x+b}_x)\| \right)
\leq \delta}
\\ \geq
& \liminf_{n \rightarrow \infty}
\Probn{\sup_{0\leq t\leq T}\sup_{x\in \R_+}\left(
\|\blt{\wbr{\Z}}^{n,\infty}(t)(H^{x+b}_x)\| \Max
\|\blt{\wbr{\Z}}^{n,\infty}(t)(V^{x+b}_x)\| \right)
\leq \delta}.
\end{split}
\end{equation*}
Now, Proposition~\ref{prop:boundary} follows by using the last inequalities,
\eqref{eqQ}--\eqref{eq:Zbound},
and
\cite[Lemma~12]{remerova14}.

We move now to the proof of \eqref{eq:Qbound} and \eqref{eq:Zbound}.
 Denote by $\Omega_{0,q}^{n}$ and $\Omega_{0,z}^n$ the events for which \eqref{eq:Qbound} and \eqref{eq:Zbound} hold, respectively.
Let $\Omega_{1,q}^{n}$ and $\Omega_{1,z}^n$ be the events for which \eqref{compactQ} and \eqref{compactZ} hold, respectively. By \cite[Proposition~1]{remerova14}, $C$ and $C'$ are relatively compact. Hence,
$\Xi:=\sup\limits_{\blt{m}\in C}\|\blt{m}(\R_+)\| < \infty$,
$\Xi':=\sup\limits_{\blt{m}\in C'}\|\blt{m}(\R_+^2)\|< \infty$,
$\sup\limits_{\blt{m}\in C}\|\blt{m}(\R_+\setminus[0,L])\|\leq \delta/4$, and
$\sup\limits_{\blt{m}\in C'}\|\blt{m}(\R_+^2\setminus[0,L']^2)\|\leq \delta/4$ for large $L$ and $L'$. In addition, put
$p_*:=\min\limits_{i,j}\{p_{ij}: z_{ij}>\delta/4, ||\blt z||\leq \Xi'\}$,
$\beta:=\frac{\delta}{8\|\blt{\eta}\|}\Min T$, $\alpha=\frac{\beta}{3}$, and
$b=\frac{\beta(p_*\Min 1)}{3}$.
Further, take $N$ and $N'$ such that
\begin{align*}
 N \alpha >  L+T \text{ and }
 N' b> L' + (\|\blt{c}^{max}\|\Max 1) T,
\end{align*}
and define the following sets
\begin{align*}
  I_{k}:=&[(k-1)\alpha, k\alpha],  \\
  I^{k}:=& [(k-2)^+\alpha, (k+1)\alpha], \\
  I_{k,k'}:=& [(k-1)b, kb]\times[(k'-1)b, k'b], \\
  I^{k,k'}:=& [(k-2)^+b, (k+1)b]\times[(k'-2)^+b, (k'+1)b].
\end{align*}
Furthermore, pick functions $g_k \in \blt{C}(\R_+,[0,1])$  and
$g_{k,k'} \in \blt{C}(\R_+^2,[0,1])$ such that
\begin{align*}
\ind{I_{k}}(\cdot) &\leq g_{k}(\cdot) \leq \ind{I^{k}}(\cdot),\\
\ind{I_{k,k'}}(\cdot) & \leq g_{k,k'}(\cdot) \leq \ind{I^{k,k'}}(\cdot),
\end{align*}
and note that
\begin{align*}
  \sum_{k\in \mathbb{N}}||<g_{k},\blt{F_D}>|| & \leq
  || \sum_{k\in \mathbb{N}}<g_{k},\blt{F_D}>|| \leq 3,  \\
    \sum_{k,k'\in \mathbb{N}}||<g_{k,k'},\blt{F}>|| & \leq
  || \sum_{k,k'\in \mathbb{N}}<g_{k,k'},\blt{F}>|| \leq 9.
\end{align*}
Define the load processes for the $n^{\text{th}}$ system, and $t\geq 0$,
\begin{equation*}
\mathcal{L}^{n,Q}_{ij}(t):=\sum_{l=1}^{E_{ij}^n(t)}\delta_{D_{ijl}},\hspace{1cm}
\mathcal{L}^{n,Z}_{ij}(t):=\sum_{l=1}^{E_{ij}^n(t)}
\delta_{\left(B_{ijl},D_{ijl}\right)},
\end{equation*}
and the corresponding  scaled load processes
\begin{equation*}
\wbr{\mathcal{L}}^{n,Q}_{ij}(t):=\frac{\mathcal{L}^{n,Q}_{ij}(nt)}{n},\hspace{1cm}
\wbr{\mathcal{L}}^{n,Z}_{ij}(t):=\frac{\mathcal{L}^{n,Z}_{ij}(nt)}{n}.
\end{equation*}
By \cite[Theorem~5.1]{gromoll2009}, we have that
\begin{align*}
\lim_{n \rightarrow \infty}
 \Probn{ \max_{1 \leq k \leq N} \sup_{0 \leq t \leq T}
||<g_{k},\wbr{\blt{\mathcal{L}}}^{n,Q}>-
\wbr{\blt{E}}(t) <g_{k},\blt{F_D}> || \leq \frac{\delta}{16 N^2}}=1
\end{align*}
and
\begin{align*}
\lim_{n \rightarrow \infty}
\Probn{ \max_{1 \leq k,k' \leq N'} \sup_{0 \leq t \leq T}
||<g_{k,k'},\wbr{\blt{\mathcal{L}}}^{n,Z}>-
\wbr{\blt{E}}(t) <g_{k,k'},\blt{F}> ||
\leq \frac{\delta}{16 N'^2}}=1,
\end{align*}
where we denote by $\Omega_{2,q}^{n}$ and $\Omega_{2,z}^n$ the corresponding events.
Further, by our assumptions
\begin{align*}
\lim_{n \rightarrow \infty}
\Probn{  \sup_{0 \leq t \leq T}
||\wbr{\blt{E}}^n(t)-
\wbr{\blt{E}}(t)||\leq \delta/16}=1,
\end{align*}
and denote these events by  $\Omega_{3}^{n}$.
Adapting the proof of  \cite[Lemma~11]{remerova14}, it follows that
$
\Omega_{1,q}^{n} \cap \Omega_{2,q}^{n} \cap\Omega_{3}^{n} \subseteq
 \Omega_{0,q}^{n}$
and
$\Omega_{1,z}^{n} \cap \Omega_{2,z}^{n} \cap\Omega_{3}^{n}
\subseteq \Omega_{0,z}^{n}$. This concludes the proof of Proposition~\ref{prop:boundary}.
\end{proof}

\begin{proposition}[Oscillation control]
For any $T>0$, $\delta>0$, and $\epsilon>0$ there exist $h>0$ and $h'>0$ such that
\begin{equation}\label{eq:oscQ}
\liminf_{n \rightarrow \infty}
\Probn{\omega(
\blt{\wbr{\Q}}^{n}(\cdot),h,T)
\leq \delta}
\geq 1-\epsilon
\end{equation}
and
\begin{equation}\label{eq:oscZ}
 \liminf_{n \rightarrow \infty}
\Probn{\omega(
\blt{\wbr{\Z}}^{n}(\cdot),h',T)
\leq \delta}
\geq 1-\epsilon,
\end{equation}
where for a measure-valued process $\X(\cdot)$ we define
\begin{equation*}
\omega(
\blt{\X}(\cdot),h,T)
:=\sup\limits_{0\leq s,t \leq T}\{
\blt{d}(\blt{\X}(t),\blt{\X}(s) ): |t-s|<h\}.
\end{equation*}
\end{proposition}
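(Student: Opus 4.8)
The plan is to derive a deterministic, pathwise bound on the Prokhorov distance $d_{I\times J}(\blt{\wbr{\Q}}^n(t),\blt{\wbr{\Q}}^n(s))$ (and the analogue for $\blt{\wbr{\Z}}^n$) that holds on a high‑probability event, and then to optimise over $h,h'$ using the estimates already established. Fix $i,j$ and $0\le s\le t\le T$ with $t-s<h$. Along a sample path the measure $\wbr{\Q}_{ij}^n$ changes between times $s$ and $t$ through exactly three mechanisms: the atoms present at time $s$ that survive to time $t$ have their residual parking time decreased by $t-s$, hence are translated to the left by at most $h$; new atoms are added by the admitted arrivals in $(ns,nt]$, contributing total mass at most $\wbr{E}_{ij}^n(t)-\wbr{E}_{ij}^n(s)$ (rejections only decrease this); and atoms disappear when their residual parking time hits $0$, so the total mass lost is at most $\wbr{\Q}_{ij}^n(s)([0,t-s])\le\wbr{\Q}_{ij}^n(s)([0,h])$. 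For $\wbr{\Z}_{ij}^n$ the picture is identical, except that a surviving atom is translated by the vector $(S_{ij}(\wbr{\blt{Z}}^n,s,t),\,t-s)$, whose max‑norm is at most $(\|\blt{c}^{max}\|\Max1)h$ since $S_{ij}(\wbr{\blt{Z}}^n,s,t)=\int_s^t p_{ij}(\cdot)\,du\le c_j^{max}(t-s)$, and an atom vanishes precisely when the minimum of its two coordinates hits $0$, so the mass lost is at most $\wbr{\Z}_{ij}^n(s)(H^{h}_0)+\wbr{\Z}_{ij}^n(s)(V^{(\|\blt{c}^{max}\|\Max1)h}_0)$.

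Next I would invoke the elementary fact that if a finite measure $\nu$ is obtained from a finite measure $\mu$ by translating part of its mass by at most $r$ and by adding or removing total mass at most $m$, then $d(\mu,\nu)\le r+m$ (the kind of estimate underlying \cite[Lemma~12]{remerova14}). Applied componentwise with $r=h$, resp.\ $r=(\|\blt{c}^{max}\|\Max1)h$, and $m$ the sum of the birth and death terms above, this gives, on any sample path,
\[
d_{I\times J}\big(\blt{\wbr{\Q}}^n(t),\blt{\wbr{\Q}}^n(s)\big)\le h+\max_{i,j}\big(\wbr{E}_{ij}^n(t)-\wbr{E}_{ij}^n(s)\big)+\sup_{x\in\R_+}\big\|\blt{\wbr{\Q}}^n(s)([x,x+h])\big\|,
\]
and the analogous bound for $\blt{\wbr{\Z}}^n$ with $(\|\blt{c}^{max}\|\Max1)h'$ replacing the first term and $\sup_x\|\blt{\wbr{\Z}}^n(s)(H^{h'}_x)\|+\sup_x\|\blt{\wbr{\Z}}^n(s)(V^{(\|\blt{c}^{max}\|\Max1)h'}_x)\|$ replacing the last. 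Taking the supremum over $|t-s|<h$ turns the left‑hand sides into $\omega(\blt{\wbr{\Q}}^n(\cdot),h,T)$ and $\omega(\blt{\wbr{\Z}}^n(\cdot),h',T)$.

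It remains to bound the right‑hand sides with high probability. The arrival increment satisfies $\wbr{E}_{ij}^n(t)-\wbr{E}_{ij}^n(s)\le\eta_{ij}(t-s)+2\sup_{0\le u\le T}\|\wbr{\blt{E}}^n(u)-\wbr{\blt{E}}(u)\|$, which is at most $\|\blt{\eta}\|h+\delta/4$ on the event $\Omega_3^n$ for $n$ large, by Assumptions~2 and~3. By Proposition~\ref{prop:boundary}, applied with a tolerance equal to a fixed small fraction of $\delta$, there are $\alpha,b>0$ such that with probability at least $1-\epsilon/2$ (for large $n$) the strip masses $\sup_x\|\blt{\wbr{\Q}}^n(s)([x,x+\alpha])\|$ and $\sup_x(\|\blt{\wbr{\Z}}^n(s)(H^{b}_x)\|\Max\|\blt{\wbr{\Z}}^n(s)(V^{b}_x)\|)$ are small uniformly in $s\in[0,T]$, which bounds the boundary terms by $\delta/4$ once $h\le\alpha$ and $(\|\blt{c}^{max}\|\Max1)h'\le b$. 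Hence, choosing $h\le\min\{\delta/4,\,\alpha,\,\delta/(4\|\blt{\eta}\|)\}$ and $h'\le\min\{h,\,b\}/(\|\blt{c}^{max}\|\Max1)$, each of the three summands is $\le\delta/4$, so $\omega(\blt{\wbr{\Q}}^n(\cdot),h,T)\le\delta$ and $\omega(\blt{\wbr{\Z}}^n(\cdot),h',T)\le\delta$ on the intersection of $\Omega_3^n$ with the event of Proposition~\ref{prop:boundary}, whose liminf probability is at least $1-\epsilon$; this proves \eqref{eq:oscQ} and \eqref{eq:oscZ}.

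The main obstacle is making the pathwise decomposition airtight: one must verify that, on the relevant event, over a short window no other mechanism alters the measures — in particular that the admission indicator $\ind{Q_i(\zeta^-)<K_i}$ only suppresses would‑be births and never resurrects a lost atom, and that for each fixed car the residual charging requirement and residual parking time are nonincreasing in time — and that the ``translate part of the mass, perturb the rest'' estimate is set up with the correct matching, notably for $\blt{\wbr{\Z}}^n$, where the two coordinates are translated by different amounts and the vanishing atoms form the set $\{x_1\Min x_2=0\}$ rather than a coordinate hyperplane. The remaining steps are bookkeeping, given Proposition~\ref{prop:boundary} and the arrival‑process assumptions.
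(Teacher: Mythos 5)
Your proposal is correct and takes essentially the same route as the paper: both arguments bound the oscillation by combining the boundary-mass control of Proposition~\ref{prop:boundary} (atoms dying near zero), the arrival-process assumptions (mass added by admitted arrivals), and the observation that surviving atoms are only translated by at most $h$ (resp.\ $(\|\blt{c}^{max}\|\Max 1)h'$ for $\blt{\wbr{\Z}}^n$). The only difference is presentational: the paper verifies the two defining Prokhorov inequalities directly (using a stopping time at the first emptying epoch), while you assemble the same three-term bound through a generic translation-plus-mass-perturbation estimate in the spirit of \cite[Lemma~12]{remerova14}; your constant bookkeeping ($\delta/4$ versus $\delta/8$ tolerances) needs only trivial adjustment.
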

\begin{proof}
We shall use the idea of proof of \cite[Lemma~13]{remerova14}. Let $\Omega_q^n$ and $\Omega_z^n$ be the events such that \eqref{eq:oscQ} and \eqref{eq:oscZ} hold, respectively.
Denote by $\Omega_1^n$ the following events
\begin{equation*}
\lim_{n \rightarrow \infty}
\Probn{  \sup_{0 \leq t \leq T}
||\wbr{\blt{E}}^n(t)-
\wbr{\blt{E}}(t)||\leq \delta/4}=1.
\end{equation*}
Further, by Proposition~\ref{prop:boundary}, there exist $a>0$ and $b>0$ such that
\begin{equation*}
 \liminf_{n \rightarrow \infty}
\Probn{\sup_{0\leq t\leq T}
||\blt{\wbr{\Q}}^n(t)([0,\alpha])||
\leq \delta}
\geq 1-\epsilon
\end{equation*}
and
\begin{equation*}
 \liminf_{n \rightarrow \infty}
\Probn{\sup_{0\leq t\leq T}
||\blt{\wbr{\Z}}^n(t)(H^{b}_0\cup V^{b}_0)||
\leq \delta}
\geq 1-\epsilon.
\end{equation*}
Denote the corresponding events by $\Omega_{2,q}^n$ and $\Omega_{2,z}^n$, respectively. Now, choose $h$ and $h'$ such that $h||\eta||\leq \delta/2$, $h\leq \delta\Max \alpha$ and
$h'(||c^{max}||\Max 1)\leq \delta\Max b$, $h'||\eta||\leq \delta/2$.

We shall show that $\Omega_{1}^n\cap \Omega_{2,q}^n\subseteq\Omega_{q}^n$
and $\Omega_{1}^n\cap \Omega_{2,z}^n\subseteq\Omega_{z}^n$. Take $0\leq s<t\leq T$ with $t-s<h$. Let $\omega\in \Omega_{1}^n\cap \Omega_{2,q}^n $, we shall show that for any non-empty closed Borel set
$B\subseteq \R_+$,
\begin{align}
\wbr{\Q}_{ij}^n(s)(B)\leq \wbr{\Q}_{ij}^n(t)(B^\delta)+\delta, \label{eq:oscQ1}\\
\wbr{\Q}_{ij}^n(t)(B)\leq \wbr{\Q}_{ij}^n(s)(B^\delta)+\delta, \label{eq:oscQ2}
\end{align}
where $B^\delta:=\{x\in \R_+: \inf\limits_{y\in B}
||x-y||\leq \delta\}$. Then
\eqref{eq:oscQ} follows.
First, we prove \eqref{eq:oscQ1}. Define $\tau:=\inf\{s\leq u\leq t: \wbr{Q}^n_{ij}(u)=0\}\Min t$. Then, we have that
\begin{equation*}
\wbr{\Q}^n_{ij}(s)(B)\leq \wbr{\Q}^n_{ij}(s)(B\cap [\alpha,\infty))+
\wbr{\Q}^n_{ij}(s)([0,\alpha))
\leq
\wbr{\Q}^n_{ij}(s)(B\cap [\alpha,\infty))+\delta,
\end{equation*}
where the last inequality holds because
$\omega\in \Omega_{2,q}^n $. Now, observe that
\begin{equation*}
\wbr{\Q}^n_{ij}(s)(B\cap [\alpha,\infty))
\leq
\wbr{\Q}^n_{ij}(\tau)(B^{\delta}),
\end{equation*}
because $\tau-s<h<\delta\Min \alpha$. To see the last statement observe that if for some EV in the system at time $s$, $D_{ijl}-(s-\zeta_{ijl})\in B$ then
$D_{ijl}-(s-\zeta_{ijl})-D_{ijl}+(\tau-\zeta_{ijl})\leq \delta$, which yields
$D_{ijl}+(\tau-\zeta_{ijl})\in B^{\delta}$. Finally, we have that
\begin{equation*}
\wbr{\Q}^n_{ij}(s)(B)\leq
\wbr{\Q}^n_{ij}(\tau)(B^{\delta})+\delta.
\end{equation*}
Now, if $\tau=t$, then \eqref{eq:oscQ1} follows. If $\tau< t$, then
$0\leq \wbr{\Q}^n_{ij}(\tau)(B^{\delta})\leq \wbr{Q}^n_{ij}(\tau)=0$, and \eqref{eq:oscQ1} follows. To show \eqref{eq:oscQ2}, we write
\begin{equation*}
\begin{split}
\wbr{\Q}^n_{ij}(t)(B)&\leq
\wbr{\Q}^n_{ij}(s)(B^{\delta})
+\wbr{E}^n_{ij}(t)-\wbr{E}^n_{ij}(s)+
\wbr{R}^n_{ij}(s)-\wbr{R}^n_{ij}(t)\\
&\leq
\wbr{\Q}^n_{ij}(s)(B^{\delta})+\wbr{E}^n_{ij}(t)
-\wbr{E}^n_{ij}(s),
\end{split}
\end{equation*}
where the second inequality follows because $\wbr{R}^n_{ij}(s)-\wbr{R}^n_{ij}(t)\leq 0$. Now, \eqref{eq:oscQ2} follows because $\omega\in \Omega_{1}^n $. We conclude that $\omega\in \Omega_{q}^n $. The proof of $\Omega_{1}^n\cap \Omega_{2,z}^n\subseteq\Omega_{z}^n$ follows by similar arguments.
\end{proof}

\subsection{Fluid limits satisfy the fluid model solutions}
Note that the total number of EVs can be written as follows
\begin{equation}\label{eq:QL}
\wbr{Q}^n_{ij}(t)=\wbr{Q}^n_{ij}(0)+\wbr{E}^n_{ij}(t)-\wbr{R}^n_{ij}(t)
-\wbr{D}^n_{ij}(t),
\end{equation}
where the number of rejected EVs $\wbr{R}^n_{ij}(\cdot)$ is given by \eqref{eq:Rejected} and
\begin{equation*}
  \wbr D^n_{ij}(t):= \frac{1}{n}\sum_{l=1}^{n\wbr Q^n_{ij}(0)}
  \ind{ D^0_{ijl}\leq t}
 +\frac{1}{n}\sum_{l=1}^{n \wbr E^n_{ij}(t)}
 \ind{\zeta_{ijl}+ D_{ijl}\leq t}
 \ind{\wbr Q^n_{i}(\zeta_{ijl}^{-})<K}.
\end{equation*}

\begin{proposition}\label{Pr:tightness}
The fluid-scaled stochastic processes
$\wbr{\blt{D}}^n(\cdot)$ and $\wbr{\blt{R}}^n(\cdot)$
are tight.
\end{proposition}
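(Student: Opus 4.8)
The plan is to establish the stronger statement that $\wbr{\blt{D}}^n(\cdot)$ and $\wbr{\blt{R}}^n(\cdot)$ are $C$-tight (tight with all limit points having continuous paths), which is what the remainder of the fluid limit proof needs. The key structural observation is that, componentwise, both processes are nonnegative, nondecreasing, and vanish at $0$; hence for each it suffices to verify a compact containment bound at a fixed time $T$ together with a modulus-of-continuity estimate of the type used in the previous subsection, after which the standard criterion for monotone processes yields $C$-tightness in $\mathcal{D}(\R_+,\R_+)$ (the route followed in \cite[Lemma~13 and Theorem~5]{remerova14}). I would treat the rejection processes first, where everything is dominated by the arrival processes, and then deduce tightness of the departure processes from the balance identity \eqref{eq:QL}.

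For $\wbr{\blt{R}}^n(\cdot)$: from \eqref{eq:Rejected} we have $R^n_{ij}(t)\le E^n_{ij}(t)$, so after scaling $0\le\wbr{R}^n_{ij}(t)\le\wbr{E}^n_{ij}(t)$, and since $\wbr{E}^n_{ij}(\cdot)\overset{d}\rightarrow\wbr{E}_{ij}(\cdot)$ the family $\{\wbr{R}^n_{ij}(T)\}_n$ is stochastically bounded. For the oscillation, note that for $0\le s\le t\le T$ the increment $\wbr{R}^n_{ij}(t)-\wbr{R}^n_{ij}(s)$ is a sum of at most $E^n_{ij}(t)-E^n_{ij}(s)$ nonnegative terms each bounded by $1/n$, hence $\wbr{R}^n_{ij}(t)-\wbr{R}^n_{ij}(s)\le\wbr{E}^n_{ij}(t)-\wbr{E}^n_{ij}(s)$; since both processes are nondecreasing this gives $\omega(\wbr{R}^n_{ij}(\cdot),h,T)\le\omega(\wbr{E}^n_{ij}(\cdot),h,T)$. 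Because the limit $\wbr{E}_{ij}$ is continuous, $\wbr{E}^n_{ij}\to\wbr{E}_{ij}$ uniformly on $[0,T]$ in probability, and combined with the Lipschitz bound $|\wbr{E}_{ij}(t)-\wbr{E}_{ij}(s)|\le\eta_{ij}|t-s|$ one gets $\limsup_{n}\Probn{\omega(\wbr{R}^n_{ij}(\cdot),h,T)>\delta}\to 0$ as $h\downarrow 0$. Thus each $\wbr{R}^n_{ij}(\cdot)$, and hence $\wbr{\blt{R}}^n(\cdot)$, is $C$-tight.

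For $\wbr{\blt{D}}^n(\cdot)$: rearranging \eqref{eq:QL} gives $\wbr{D}^n_{ij}(\cdot)=\wbr{Q}^n_{ij}(0)+\wbr{E}^n_{ij}(\cdot)-\wbr{R}^n_{ij}(\cdot)-\wbr{Q}^n_{ij}(\cdot)$. On the right-hand side $\wbr{Q}^n_{ij}(0)$ converges in distribution to a finite random variable; $\wbr{E}^n_{ij}(\cdot)$ is $C$-tight; $\wbr{R}^n_{ij}(\cdot)$ is $C$-tight by the previous step; and $\wbr{Q}^n_{ij}(\cdot)=\langle 1,\wbr{\Q}^n_{ij}(\cdot)\rangle$ is $C$-tight because $\blt{\wbr{\Q}}^n(\cdot)$ is $C$-tight (established earlier in this section) and $\mu\mapsto\langle 1,\mu\rangle$ is continuous on $\mathcal{M}_F(\R_+)$. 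Marginal $C$-tightness of these finitely many sequences forces joint $C$-tightness of the tuple, and on the limit set (whose coordinates are continuous) the map sending the tuple to $\wbr{D}^n_{ij}(\cdot)$ is continuous; hence $\wbr{D}^n_{ij}(\cdot)$, and therefore $\wbr{\blt{D}}^n(\cdot)$, is $C$-tight, and in particular tight.

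I expect the only mildly delicate point to be the oscillation of $\wbr{\blt{D}}^n(\cdot)$ if one prefers a direct argument rather than the detour through \eqref{eq:QL}: using that $\wbr{R}^n_{ij}$ is nondecreasing one writes $\wbr{D}^n_{ij}(t)-\wbr{D}^n_{ij}(s)\le\big(\wbr{E}^n_{ij}(t)-\wbr{E}^n_{ij}(s)\big)+|\wbr{Q}^n_{ij}(t)-\wbr{Q}^n_{ij}(s)|$ for $s\le t$, and then combines the oscillation control already proved for $\blt{\wbr{\Q}}^n(\cdot)$ with the observation that, taking the closed set $B=\R_+$ (so $B^{\delta}=\R_+$) in the Prokhorov inequalities, $|\langle 1,\wbr{\Q}^n_{ij}(t)\rangle-\langle 1,\wbr{\Q}^n_{ij}(s)\rangle|\le d\big(\wbr{\Q}^n_{ij}(t),\wbr{\Q}^n_{ij}(s)\big)$; this reduces the oscillation of $\wbr{Q}^n_{ij}$ to that of $\blt{\wbr{\Q}}^n$, which is under control.
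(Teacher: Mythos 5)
Your proposal is correct, but it proves the proposition by a genuinely different route than the paper. The paper works in the opposite order: it first establishes tightness of $\wbr{\blt{D}}^n(\cdot)$ directly, verifying Kurtz's relative-compactness criteria by dominating $\wbr{D}^n_{ij}$ with the departure process $\wbr{D}^{n,\infty}_{ij}$ of the system without blocking (whose convergence is imported from \cite{reed2009}) and controlling the extra term coming from rejected arrivals, $\frac{1}{n}\sum_l \ind{t<\zeta_{ijl}+D_{ijl}\leq t+\delta}$, via continuity of the random variables $\zeta_{ijl}$ and $D_{ijl}$; tightness of $\wbr{\blt{R}}^n(\cdot)$ is then deduced from the balance identity \eqref{eq:QL} together with tightness of $\wbr{\blt{D}}^n(\cdot)$ and $\wbr{\blt{Q}}^n(\cdot)$. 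You instead handle $\wbr{\blt{R}}^n(\cdot)$ first, which is elementary because by \eqref{eq:Rejected} both its level and its increments are dominated by those of the arrival process, and then recover $\wbr{\blt{D}}^n(\cdot)$ from \eqref{eq:QL}, using the compact containment and oscillation control for $\blt{\wbr{\Q}}^n(\cdot)$ from the preceding subsection together with the bound $|\langle 1,\mu\rangle-\langle 1,\nu\rangle|\leq d(\mu,\nu)$ (take $B=\R_+$ in the Prokhorov inequalities); since those results do not rely on the present proposition, there is no circularity. Your route avoids both Kurtz's criteria and the appeal to \cite{reed2009}, and it delivers $C$-tightness explicitly, which is in fact what the subsequent identification of fluid limits uses; the paper's route buys a comparison with a known convergent infinite-server-type departure process and keeps the argument closer to the template of \cite{remerova14}, at the cost of the extra continuity argument for the rejected-departure term. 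Both arguments ultimately lean on the tightness of $\wbr{\blt{Q}}^n(\cdot)$ at one step (the paper for $\wbr{\blt{R}}^n$, you for $\wbr{\blt{D}}^n$), so the logical dependencies are comparable.
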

\begin{proof}
First, we shall show that $\wbr D^n_{ij}(\cdot)$ is a relatively compact sequence using Kurtz's criteria (see \cite[Proposition~6.2]{kang2010fluid}), then by Prokhorov's Theorem, it is tight.
Observe that almost surely
\begin{equation*}
 \wbr D^n_{ij}(t)\leq \frac{1}{n}\sum_{l=1}^{n\wbr Q^n_{ij}(0)}
 \ind{ D^0_{ijl}\leq t}
 +\frac{1}{n}\sum_{l=1}^{n \wbr E^n_{ij}(t)}
  \ind{\zeta_{ijl}+ D_{ijl}\leq t}=:
 \wbr D^{n,\infty}_{ij}(t),
\end{equation*}
and by \cite{reed2009} the latter is a weakly convergent sequence in $(\mathcal{D}[0,\infty),J_1)$  and hence it is tight. By Prokhorov's Theorem it is also relatively compact.
That is,
\begin{align*}
   \lim_{c\rightarrow \infty} \Prob{\wbr D^n_{ij}(t)>c} \leq
   \lim_{c\rightarrow \infty} \Prob{\wbr D^{n,\infty}_{ij}(t)>c}=0.
\end{align*}
In other words, $\wbr D^n_{ij}(\cdot)$ is stochastically bounded and hence satisfies the first property of Kurtz's criteria.
To show that it also satisfies the second property,  we write
\begin{align*}
 \wbr D^n_{ij}(t+\delta)- \wbr D^n_{ij}(t&) =
 \wbr D^{n,\infty}_{ij}(t+\delta)- \wbr D^{n,\infty}_{ij}(t)
 +\frac{1}{n}
 \sum_{l=1}^{n \wbr E^n_{ij}(t+\delta)} \ind{\zeta_{ijl}+ D_{ijl}
 \leq t+\delta}
 \ind{\wbr Q^n_i(\zeta_{ijl}^{-})=K_i}\\
& \hspace{4cm} -
 \frac{1}{n}\sum_{l=1}^{n \wbr E^n_{ij}(t)} \ind{\zeta_{ijl}+ D_{ijl}\leq t}
 \ind{\wbr Q^n_i(\zeta_{ijl}^{-})=K_i}
 \\
 &=\wbr D^{n,\infty}_{ij}(t+\delta)- \wbr D^{n,\infty}_{ij}(t)+
 \sum_{l=1}^{n \wbr E^n_{ij}(t+\delta)} \ind{t<\zeta_{ijl}+
  D_{ijl}\leq t+\delta}
 \ind{\wbr Q^n_i(\zeta_{ijl}^{-})=K_i}.
\end{align*}
Note that for any $t\geq 0$ and $n\geq 1$,
\begin{equation*}
\begin{split}
 \frac{1}{n}\sum_{l=1}^{n \wbr E^n_{ij}(t+\delta)}
  \ind{t<\zeta_{ijl}+ D_{ijl}\leq t+\delta}
 \ind{\wbr Q^n_i(\zeta_{ijl}^{-})=K_i}
 &\leq
 \frac{1}{n} \sum_{l=1}^{n \wbr E^n_{ij}(t+\delta)}
  \ind{t<\zeta_{ijl}+ D_{ijl}\leq t+\delta}\\
  &\leq
\sup_{n} \wbr E^n_{ij}(t+\delta)< \infty .
\end{split}
\end{equation*}
Further, by continuity of the random variables $\zeta_{ijl}$ and $D_{ijl}$, we have that as $\delta \rightarrow 0$,
\begin{equation}\label{eq:cont}
   \sum_{l=1}^{n \wbr E^n_{ij}(t+\delta)} \ind{t<\zeta_{ijl}+ D_{ijl}\leq t+\delta}
   \rightarrow 0.
\end{equation}
Putting all the pieces together,
\begin{align*}
 |\wbr D^n_{ij}(t+\delta)- \wbr D^n_{ij}(t)|& \leq
 |\wbr D^{n,\infty}_{ij}(t+\delta)- \wbr D^{n,\infty}_{ij}(t)|
+\frac{1}{n}\sum_{l=1}^{n \wbr E^n_{ij}(t+\delta)}
\ind{t<\zeta_{ijl}+ D_{ijl}\leq t+\delta}.
\end{align*}
By \eqref{eq:cont}, the fact that $\wbr D^{n,\infty}_{ij}(\cdot)$ is relatively compact and using the same arguments as in
\cite[Lemma~5.10]{kaspi2011}, we conclude that $ \wbr D^n_{ij}(\cdot)$ satisfies the second property of Kurtz's criteria. That is,
$\wbr D^n_{ij}(\cdot)$
 is relatively compact and hence tight. The tightness of $\wbr{\blt{R}}^n(\cdot)$ follows by \eqref{eq:QL} and by the tightness of $\wbr{\blt{D}}^n(\cdot)$ and $\wbr{\blt{Q}}^n(\cdot)$.
\end{proof}
Next, we show that the fluid limits are bounded away from zero.
\begin{proposition}
Let $(\wbr{\blt{\Q}}(\cdot),\wbr{\blt{Q}}(\cdot),\wbr{\blt{\Z}}(\cdot),\wbr{\blt{Z}}(\cdot))$.
be a fluid limit.
Assume that if\ $\wbr{Q}_i(0)=K_i$, then $0<\wbr{Q}_{ij}(0)<K_i$ for any $i,j\geq 1$. For any $\delta >0$, there exist $C_{\delta}>0$
and  $C_{\delta}'>0$
such that
almost surely
\begin{equation*}
 \inf_{t\geq \delta}\min_{i,j} \wbr{Q}_{ij}(t)\geq C_{\delta}
 \ \text{ and }\
 \inf_{t\geq \delta}\min_{i,j} \wbr{Z}_{ij}(t)\geq C_{\delta}'.
\end{equation*}
\end{proposition}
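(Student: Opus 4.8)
This is the fluid-limit counterpart of Proposition~\ref{ch5prop:positiveZ}, and the plan is to pass to the limit first and then rerun that argument. Fix $T>0$; by the tightness established above, by Proposition~\ref{Pr:tightness}, and by the Skorokhod representation theorem, I would work along a subsequence on which $(\wbr{\blt{\Q}}^n(\cdot),\wbr{\blt{\Z}}^n(\cdot),\wbr{\blt{Q}}^n(\cdot),\wbr{\blt{Z}}^n(\cdot),\wbr{\blt{R}}^n(\cdot),\wbr{\blt{D}}^n(\cdot))$ converges almost surely, uniformly on $[0,T]$, to $(\wbr{\blt{\Q}}(\cdot),\wbr{\blt{\Z}}(\cdot),\wbr{\blt{Q}}(\cdot),\wbr{\blt{Z}}(\cdot),\wbr{\blt{R}}(\cdot),\wbr{\blt{D}}(\cdot))$. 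The first step is to observe that the limit already satisfies the part of the fluid model equations that does \emph{not} involve the charging dynamics: letting $n\to\infty$ in \eqref{eq:QL} gives $\wbr{Q}_{ij}(t)=\wbr{Q}_{ij}(0)+\wbr{E}_{ij}(t)-\wbr{R}_{ij}(t)-\wbr{D}_{ij}(t)$, and the structural relations $\wbr{R}_i(t)=\int_0^t\ind{\wbr{Q}_i(s)=K_i}\,d\wbr{R}_i(s)$ and $\wbr{R}_{ij}(t)=\int_0^t\frac{\lambda_{ij}(s)}{\sum_h\lambda_{ih}(s)}\,d\wbr{R}_i(s)$ follow from the prelimit facts that a type-$j$ EV is rejected at node $i$ only while $Q_i^n=K_i^n$ and that $E_{ij}^n/E_i^n\to\lambda_{ij}/\sum_h\lambda_{ih}$, together with $\wbr{Q}_i^n\to\wbr{Q}_i$. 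These are exactly the inputs of Proposition~\ref{ch5:analysisQ}, so the limit also obeys \eqref{eq:Departures}--\eqref{eq:departurerate}; write $\wbr{A}_{ij}:=\wbr{E}_{ij}-\wbr{R}_{ij}$ for the admitted fluid, which is Lipschitz.

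The lower bound will rest on the charging cap $c_j^{\max}$: a type-$j$ EV admitted at node $i$ at time $u$ has received at most $c_j^{\max}(t-u)$ units of charge by time $t$, so it is still present and \emph{uncharged} at $t$ whenever $(B_{ij}/c_j^{\max})\wedge D_{ij}>t-u$, and still present (possibly fully charged) whenever $D_{ij}>t-u$. Applying this at the prelimit and using that the marks $(B_{ijl},D_{ijl})$ are i.i.d.\ and independent of the admission decision for the $l$th EV (which depends only on the state strictly before $\zeta_{ijl}^n$), a functional law of large numbers gives, after passing to the a.s.\ convergent subsequence, for every $\epsilon\in(0,\delta]$ and uniformly in $t\in[\delta,T]$,
\[
\wbr{Z}_{ij}(t)\ \ge\ \int_{(t-\epsilon)^+}^{t}\Prob{(B_{ij}/c_j^{\max})\wedge D_{ij}\ge t-u}\,d\wbr{A}_{ij}(u),
\qquad
\wbr{Q}_{ij}(t)\ \ge\ \int_{(t-\epsilon)^+}^{t}\Prob{D_{ij}\ge t-u}\,d\wbr{A}_{ij}(u).
\]
Since $\Prob{(B_{ij}/c_j^{\max})\wedge D_{ij}\ge r}\to1$ and $\Prob{D_{ij}\ge r}\to1$ as $r\downarrow0$ (the density assumption on $D_{ij}$ entering here), the statement reduces to producing, for each of the finitely many pairs $(i,j)$, constants $\epsilon>0$ and $c>0$ with $\wbr{A}_{ij}(t)-\wbr{A}_{ij}((t-\epsilon)^+)\ge c$ for all $t\ge\delta$; taking minima over $(i,j)$ then yields $C_\delta$ and $C_\delta'$.

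For the admitted fluid I would repeat the two-regime argument of Proposition~\ref{ch5prop:positiveZ}. Set $\tau_i:=\inf\{t\ge0:\wbr{Q}_i(t)=K_i\}\in[0,\infty]$. On $[0,\tau_i)$ the total stays strictly below $K_i$, so no rejection occurs in the limit and $\wbr{A}_{ij}$ has derivative $\lambda_{ij}(\cdot)\ge\inf_{[0,T]}\lambda_{ij}>0$. Once the system is full the flow identity and the one-dimensional reflection argument already used in Theorem~\ref{Ch5thm:Uniqueness} pin down $\wbr{Q}_{ij}$, which remains strictly positive (it equals $\wbr{Q}_{ij}(0)>0$ if $\tau_i=0$, by hypothesis, and is positive by the first regime and continuity if $\tau_i>0$); then, exactly as in \eqref{eq:departurerate}, the departure rate $\delta_{ij}(\cdot)$ is strictly positive because $f_{D_{ij}}(0)>0$, $\wbr{Q}_{ij}>0$ and $\wbr{R}_{ij}\equiv0$ on $[0,\tau_i]$, so the admitted type-$j$ rate is $\frac{\lambda_{ij}(\cdot)}{\sum_h\lambda_{ih}(\cdot)}\sum_h\delta_{ih}(\cdot)>0$; in any interval where the total is again below $K_i$ the rate reverts to $\lambda_{ij}(\cdot)$. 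In every regime $\wbr{A}_{ij}(t)-\wbr{A}_{ij}((t-\epsilon)^+)$ is thus bounded below by a positive constant once $\epsilon\le\delta$ is small enough that the window $((t-\epsilon)^+,t]$ lies in a single regime, uniformly over $t\ge\delta$. I expect the main obstacle to be precisely this full-system regime: making the passage to the limit rigorous there genuinely requires the limit to satisfy the structural $\wbr{R}$-equations and the departure representation \eqref{eq:depE}--\eqref{eq:departurerate} (the content of the first paragraph), together with a uniform-in-time lower bound on $\sum_h\delta_{ih}$ while the system is full; once those are in place, the remaining estimates are a routine transcription of the proof of Proposition~\ref{ch5prop:positiveZ}.
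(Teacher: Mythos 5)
Your proposal is correct in outline, and for the $\wbr{Z}$-part it is essentially the paper's argument: both exploit the cap $c_j^{\max}$ to dominate the uncharged population from below by an infinite-server-type system fed by the \emph{accepted} arrivals with service times $\frac{B_{ijl}}{c_j^{\max}}\Min D_{ijl}$, and both reduce the problem to growth of the accepted fluid $\wbr{A}_{ij}=\wbr{E}_{ij}-\wbr{R}_{ij}$; your computation of the admitted rate $\frac{\lambda_{ij}}{\sum_h\lambda_{ih}}\sum_h\delta_{ih}$ during full periods is precisely the paper's strict-increase argument for $\wbr{A}_{ij}$, which likewise leans on \eqref{eq:Departures}--\eqref{eq:departurerate}, the proof of Proposition~\ref{ch5prop:positiveZ} and $f_{D_{ij}}(0)>0$. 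Where you genuinely diverge is the $\wbr{Q}$-bound: the paper never argues through the limit dynamics there, but instead partitions the non-full portion of $[\delta,\Delta]$ into intervals of length $b/2$, applies a prelimit law of large numbers to count arrivals in each interval whose deadline exceeds $b$ (with $b$ a continuity point of $F_{D_{ij}}$), treats full periods by constancy/continuity of the limit, and transfers the bound to the limit through the continuous functional $\phi_{[\delta,\Delta]}$ and the Portmanteau theorem; your route instead makes the departure-rate machinery carry the $\wbr{Q}$-bound as well, which is admissible (no circularity, since the $\wbr{Q}$-dynamics do not involve the allocation) but heavier than needed. Two rough spots in your write-up are repairable along the paper's lines: first, you do not need, and in general cannot have, windows $((t-\epsilon)^+,t]$ lying in a single regime --- positive rate bounds in both regimes suffice, or better, once $\wbr{A}_{ij}$ is shown strictly increasing, continuity and compactness of $[\delta,T]$ already give $\inf_{t\in[\delta,T]}\bigl(\wbr{A}_{ij}(t)-\wbr{A}_{ij}((t-\epsilon)^+)\bigr)>0$, which also disposes of the ``uniform-in-time lower bound on $\sum_h\delta_{ih}$'' that you flag as the main remaining obstacle; second, your windowed FLLN bounds for $\wbr{Q}_{ij}$ and $\wbr{Z}_{ij}$ need the Glivenko--Cantelli-type convergence of the accepted load measures, which the paper obtains via the quantities $X^n,\widetilde{X}^n$ and \cite[Lemma~5.1]{gromoll2008}, and whose infinite-server consequence it simply outsources to \cite[Lemma~3.14]{remerova14} before the final Portmanteau step. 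In short: same mechanism for $\wbr{Z}$, a different (limit-level rather than prelimit counting) mechanism for $\wbr{Q}$, and the gaps you flag are exactly the points the paper closes by compactness plus the cited lemmas.
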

\begin{proof}
First, we shall show that $\wbr{Q}_{ij}(\cdot)$ is strictly positive. It is enough to show this inequality when the system is not full. Fix $\Delta>\delta$. It is enough to show the result for $t\in[\delta,\Delta]$.
Define
\begin{equation*}
  \begin{split}
  \tau_{i}^0:=\inf\{\delta \leq s\leq \Delta: \wbr{Q}_{i}(s)=K_i\},\
  \tilde{\tau}_{i}^0:=\inf\{\tau_i^0\leq s\leq \Delta: \wbr{Q}_{i}(s)<K_i\},\\
  \tau_{i}^r:=\inf\{\tilde{\tau}^{r-1} \leq s\leq \Delta:\wbr{Q}_{i}(s)=K_i\},\
  \tilde{\tau}_{i}^r:=\inf\{\tau_i^r\leq s\leq \Delta: \wbr{Q}_{i}(s)<K_i\}.
  \end{split}
\end{equation*}
Take a partition
\begin{equation*}
(0,\Delta]\setminus
\bigcup\limits_{r}[\tau_{i}^r,\tilde{\tau}_{i}^{r})
\subseteq
\bigcup\limits_{1\leq m\leq N(\Delta)} ((m-1)b/2,mb/2].
\end{equation*}
By our assumptions for the external arrival process, we have that for any $m$,
\begin{equation*}
\frac{1}{n}\sum_{l=E_{ij}^n((m-1)b/2)+1}^{E_{ij}^n(mb/2)}
\ind{D_{ij}\geq b}
\overset{d}\rightarrow
(\wbr{E}_{ij}(mb/2)-\wbr{E}_{ij}((m-1)b/2))\Prob{D_{ij}>b}>0,
\end{equation*}
where $b$ is a continuity point for the distribution $F_{D_{ij}}(\cdot)$ with $\Prob{D_{ij}>b}>0$, and  the last inequality follows because $\wbr{E}_{ij}(\cdot)$ is strictly increasing. Choose $b$ such that
$\max\limits_{ij}(\wbr{E}_{ij}(mb/2)-\wbr{E}_{ij}((m-1)b/2))\Prob{D_{ij}>b}<K_i$, and pick $C_{\delta}$ such that
$\max\limits_{ij}
(\wbr{E}_{ij}(mb/2)-\wbr{E}_{ij}((m-1)b/2))\Prob{D_{ij}>b}>C_{\delta}.$
Then, for large enough $n$, we have that for any $i,j\geq 1$,
\begin{equation*}
\begin{split}
\Probn{
\inf_{\delta \leq t \leq \Delta}
\wbr{Q}_{ij}^n(t)\geq C_{\delta}
}
&\geq
\Probn{\inf_{(m-1)b/2\leq t\leq mb/2} \wbr{Q}_{ij}^n(t)\geq C_\delta\
 \text{for any}\ m}\\
&\geq
\Probn{ \sum_{l=E_{ij}^n((m-1)b/2)+1}^{E_{ij}^n(mb/2)}
\ind{D_{ij}\geq b}\geq C_\delta\
\text{for any}\ m} \rightarrow 1.
 \end{split}
\end{equation*}
Further, note that by continuity of the limit we have
$\wbr{Q}_{ij}^n(t)=\wbr{Q}_{ij}^n(\tau_{i}^r)\geq C_{\delta}$, for $t\in [\tau_{i}^r,\tilde{\tau}_{i}^{r})$. Finally, we have that there exists  $C_{\delta}>0$ such that, for any $\Delta>\delta$,
\begin{align*}
\Probn{
\inf_{\delta \leq t \leq \Delta}
\min_{i,j}
\wbr{Q}^n_{ij}(t)\geq C_{\delta}
} \rightarrow 1,
\end{align*}
as $n\rightarrow \infty$.
For any compact  set $C\subseteq R_+$, define the mapping $\phi_C: D(\R_+,\R^{I\times J})\rightarrow \R$, given by
$\phi_C(\blt{y}):=\inf_{t\in C}\min_{i,j}y_{ij}(t)$. Note that $\phi_C(\blt{y})$ is continuous at continuous $\blt{y}(\cdot)$, which implies that
\begin{equation*}
\phi_{[\delta, \Delta]}
(\wbr{\blt{Q}}^n)
\overset{d}\rightarrow
\phi_{[\delta, \Delta]}
(\wbr{\blt{Q}}).
\end{equation*}
By the Portmanteau theorem \cite[Theorem~2.1]{billingsley1999convergence}, we have that
\begin{align*}
\Probn{
\phi_{[\delta, \Delta]}
(\wbr{\blt{Q}})\geq C_{\delta}
}
\geq
 \limsup_{n\rightarrow \infty}
 \Probn{
\phi_{[\delta, \Delta]}
(\wbr{\blt{Q}}^n)\geq C_{\delta}
} =1,
\end{align*}

We now move to the proof of $\wbr{Z}_{ij}(t)>0$ for $t>0$.  We first note that
$\blt{Q}(\cdot)$ is independent of $\blt{Z}(\cdot)$, and hence we can assume that the fluid limit $(\wbr{\blt{\Q}}(\cdot),\wbr{\blt{Q}}(\cdot))$ satisfies the fluid model equations as we shall show later. That is, $(\wbr{\blt{\Q}}(\cdot),\wbr{\blt{Q}}(\cdot))$ satisfies the equations in Proposition~\ref{ch5:analysisQ}. By
Proposition~\ref{Pr:tightness}, we have that the fluid-scaled process that describes the number of accepted EVs given in \eqref{eq:Accepted} converges weakly to
$\wbr{\blt{A}}(t):=\wbr{\blt{E}}(t)-\wbr{\blt{R}}(t)$. First, we show that
$\wbr{A}_{ij}(t)$ is strictly increasing for any $i,j\geq 1$. Let  $t_1,t_2\geq 0$ with $0\leq t_1< t_2$. Assume that there exists a subinterval in $[t_1,t_2]$ such that the total queue length at node $i$ is full. Without loss of generality, assume that there exists
$\tau\in [t_1,t_2]$ such that $\wbr{Q}_i(s)=K_i$ for any  $s\in [\tau,t_2]$. First, assume that $\tau>t_1$, then we have that
\begin{align*}
\wbr{A}_{ij}(t_2)-\wbr{A}_{ij}(t_1)&=
\wbr{E}_{ij}(t_2)-\wbr{R}_{ij}(t_2)-\wbr{E}_{ij}(t_1)
+\wbr{R}_{ij}(t_1)\\
&\geq \wbr{E}_{ij}(t_2)-\wbr{R}_{ij}(t_2)-\wbr{E}_{ij}(t_1)\geq \wbr{E}_{ij}(\tau)-\wbr{E}(t_1)>0.
\end{align*}
If $\tau=t_1$, then by \eqref{eq:Departures}, \eqref{eq:depE}, and the fact that $\wbr{Q}_{ij}(t_2)=\wbr{Q}_{ij}(t_1)$, we obtain
\begin{align*}
\wbr{A}_{ij}(t_2)-\wbr{A}_{ij}(t_1)&=
\wbr{D}_{ij}(t_2)-\wbr{D}_{ij}(t_1)=\int_{t_1}^{t_2}
\delta_{ij}(s)ds,
\end{align*}
where $\delta_{ij}(s)=\lim_{\epsilon\rightarrow 0}
\frac{\wbr{Q}_{ij}(s)-\wbr{\Q}_{ij}(s)\left([\epsilon,\infty)\right)}
{\epsilon}$. Further, by the proof of Proposition~\ref{ch5prop:positiveZ}, we have that
$\delta_{ij}(s)=\delta_{ij}(t_1)>0$ for $s\in[t_1,t_2]$, and hence $\wbr{A}_{ij}(t_2)-\wbr{A}_{ij}(t_1)>0$. Now, consider a type-$j$ EV $l$ at node $i$. Observe that by the constraints
$p_{ij}(\cdot)\leq c^{\text{max}}_j$, we have that
$\frac{B_{ijl}}{p_{ij}(\cdot)}\Min D_{ijl}\geq
\frac{B_{ijl}}{c^{\text{max}}_j}\Min D_{ijl}$. That is, EV $l$ will stay in the network  at least $\frac{B_{ijl}}{c^{\text{max}}_j}\Min D_{ijl}$ after its arrival. Hence, the stochastic process
$Z_{ij}(\cdot)$ is bounded  from below by the queue length $Q_{ij}^{\text{inf}}(\cdot)$ of the infinite-server queue with arrival process $A_{ij}(\cdot)$, $Q_{ij}^{\text{inf}}(0)=0$, and i.i.d. service requirements
$\{
\frac{B_{ijl}}{c^{\text{max}}_j}\Min D_{ijl}, l\in \mathbb{N} \}$.
Recalling that $\wbr{A}_{ij}(\cdot)$ is strictly increasing by
\cite[Lemma~3.14]{remerova14}, there exists  $C_{\delta}'>0$ such that, for any $\Delta>\delta$,
\begin{align*}
\Probn{
\inf_{\delta \leq t \leq \Delta}
\min_{i,j}
\wbr{Z}^n_{ij}(t)\geq C_{\delta}'
} \geq
\Probn{
\inf_{\delta \leq t \leq \Delta}
\min_{i,j}
\wbr{Q}_{ij}^{n,\text{inf}}(t)\geq C_{\delta}'
} \rightarrow 1,
\end{align*}
as $n\rightarrow \infty$. Now, using again the Portmanteau theorem, we have that
\begin{align*}
\Probn{
\phi_{[\delta, \Delta]}
(\wbr{\blt{Z}})\geq C_{\delta}'
}
\geq
 \limsup_{n\rightarrow \infty}
 \Probn{
\phi_{[\delta, \Delta]}
(\wbr{\blt{Z}}^n)\geq C_{\delta}'
} =1.
\end{align*}

\end{proof}

\subsubsection{Fluid limits are fluid model solutions}
In the sequel, we focus on proving that any fluid limit satisfies the fluid model equations given in Definition~\ref{def:FMSC}. Let $(\wbr{\blt{\Q}}(\cdot),\wbr{\blt{Q}}(\cdot),
\wbr{\blt{\Z}}(\cdot),\wbr{\blt{Z}}(\cdot),\wbr{\blt{R}}(\cdot))$ be a fluid limit along a subsequence, which with an abuse of notation, we denote  again by
 $(\wbr{\blt{\Q}}^n(\cdot),\wbr{\blt{Q}}^n(\cdot),
\wbr{\blt{\Z}}^n(\cdot),\wbr{\blt{Z}}^n(\cdot), \wbr{\blt{R}}^n(\cdot))$.
Recall that
$\mathcal{C}:=\left\{[x,\infty),\ x\in \R_+ \right\}$ and $\mathcal{C'}:=\left\{[x,\infty)\times[y,\infty),\ x,y\in \R_+ \right\}$. Proposition~\ref{prop:boundary} and \cite[Lemma~6.2]{gromoll2008} imply that
for any $\A\in \mathcal{C}$ and $\A'\in \mathcal{C'}$, almost surely
$\wbr{\Q}_{ij}(t)(\partial\A)=0$ and
$\wbr{\Z}_{ij}(t)(\partial\A')=0$ for $t\geq 0$ and $i,j\geq 1$. Hence, we can restrict $\mathcal{C}$ and $\mathcal{C'}$ to the following restricted  classes
$\mathcal{C}_+:=\left\{[x,\infty),\ x>0 \right\}$ and $\mathcal{C'}_+:=\left\{[x,\infty)\times[y,\infty),\ x\Min y>0 \right\}$.
In addition, we fix $T>0$ and we work in the time interval $[0,T]$.

The total number of type-$j$ EVs at node $i$ can be written as follows
\begin{equation*}
\Q_{ij}^n(t)(A)=\Q_{ij}^n(0)\left(A+t\right)
+\sum_{l=1}^{E_{ij}^n(t)}
\indset_A \left(D_{ij}-(t-\zeta_{ijl})\right)\ind{Q_{i}^n(\zeta_{ijl}^-)<K_i}.
\end{equation*}
Further, the above expression can be rewritten as
\begin{equation*}
\Q_{ij}^n(t)(A)=\Q_{ij}^n(0)\left(A+t\right)
+\sum_{l=1}^{A_{ij}^n(t)}
\indset_A\left(D_{ij}-(t-\xi_{ijl})\right),
\end{equation*}
where $\xi_{ijl}$ represents the time of the $l^{th}$ accepted EV and $A_{ij}^n(\cdot)$ represents the number of accepted type-$j$ EVs at node $i$.
In the same way, the number of uncharged type-$j$ EVs at node $i$ is given by
\begin{equation*}
\begin{split}
\Z_{ij}^n(t)(A')=\Z_{ij}^n(0)&\left(A'+(S_{ij}(\blt{Z}^n,0,t),t)\right)
\\&+\sum_{l=1}^{A_{ij}^n(t)}
\indset_{A'}\left(B_{ijl}-S_{ij}(\blt{Z}^n,\xi_{ijl},t),
D_{ij}-(t-\xi_{ijl})\right).
\end{split}
\end{equation*}
In the above expressions, we relabel the parking times and the charging requirements accordingly, where with abuse of notation we denote them by the same letters.
Now, we can follow the strategy in \cite[Section~7.6]{remerova14}.
Consider a partition $0<t_0<\ldots<t_N=t$ and take a nonincreasing function function $y(\cdot)$ in $[t_0,t]$ such that
\begin{equation*}
\sup_{t_0 \leq s \leq t}| S_{ij}(\wbr{\blt{Z}}^n,s,t)-y(s)|\leq \delta,
\end{equation*}
for some $\delta>0$.
We note that for $\xi_{ijl}\in (t_r,t_{r+1}]$, the following inequalities hold
\begin{equation*}
\begin{split}
D_{ijl}-(t-t_r)\leq   D_{ijl}-(t-\xi_{ijl})
 \leq D_{ijl}-(t-t_{r+1}),
  \end{split}
\end{equation*}
\begin{equation*}
  \begin{split}
B_{ijl}-(y(t_r)+\delta)\leq   B_{ijl}-S_{ij}(\blt{Z}^n,\xi_{ijl},t)
 \leq B_{ijl}-(y(t_{r+1})+\delta).
 \end{split}
\end{equation*}
Now, define the following quantities
\begin{equation*}
\begin{split}
\Gamma^{n,1}_{ij}(t):=
\sum_{r=0}^{N-1} \left(\wbr{A}_{ij}(t_{r+1})-\wbr{A}_{ij}(t_{r})\right)
F_{D_{ij}}\left(A+(t-t_{r})\right)-\widetilde{X}^n,
\end{split}
\end{equation*}
\begin{equation*}
\begin{split}
\Gamma^{n,2}_{ij}(t):=\sum_{r=0}^{N-1} \
\left(\wbr{A}_{ij}(t_{r+1})-\wbr{A}_{ij}(t_{r})\right)
F_{D_{ij}}\left(A+(t-t_{r+1})\right)+\widetilde{X}^n
+\wbr{A}_{ij}(t_0)+\widetilde{X}^n,
  \end{split}
\end{equation*}
\begin{equation*}
\begin{split}
\Gamma^{n,3}_{ij}(t):=
\sum_{r=0}^{N-1} \left(\wbr{A}_{ij}(t_{r+1})-\wbr{A}_{ij}(t_{r})\right)
F_{ij}\left(A'+(y(t_r)+\delta,t-t_{r})\right)-X^n,
\end{split}
\end{equation*}
\begin{equation*}
\begin{split}
\Gamma^{n,4}_{ij}(t):=
\sum_{r=0}^{N-1} \left(\wbr{A}_{ij}(t_{r+1})-\wbr{A}_{ij}(t_{r})\right)
F_{ij}\left(A'+(y(t_{r+1})+\delta,t-t_{r+1})\right)\\+X^n
+\wbr{A}_{ij}(t_0)+X^n,
\end{split}
\end{equation*}
where
\begin{equation*}
\widetilde{X}^n:=\sup_{A\in\mathcal{C}}\sup_{0\leq s\leq t\leq T}
\left\|\wbr{\blt{\mathfrak{L}}}^{n,Q}(s,t)(A)-(\wbr{\blt{A}}(t)
-\wbr{\blt{A}}(s))\circ\blt{F_D}(A)\right\|,
\end{equation*}
with
\begin{equation*}
\wbr{\mathfrak{L}}^{n,Q}_{ij}(s,t)(A)=
\frac{1}{n}\sum_{l=1}^{A_{ij}^n(nt)}
\delta_{D_{ijl}}(A)
-
\frac{1}{n}\sum_{l=1}^{A_{ij}^n(ns)}
\delta_{D_{ijl}}(A),
\end{equation*}
and
\begin{equation*}
X^n:=\sup_{A'\in\mathcal{C'}}\sup_{0\leq s\leq t\leq T}
\left\|\wbr{\blt{\mathfrak{L}}}^n(s,t)(A')
-(\wbr{\blt{A}}(t)-\wbr{\blt{A}}(s))\circ\blt{F}(A')\right\|,
\end{equation*}
with
\begin{equation*}
\wbr{\mathfrak{L}}^n_{ij}(s,t)(A')=
\frac{1}{n}\sum_{l=1}^{A_{ij}^n(nt)}
\delta_{\left(B_{ijl}, D_{ijl} \right)}(A')
-
\frac{1}{n}\sum_{l=1}^{A_{ij}^n(ns)}
\delta_{\left(B_{ijl}, D_{ijl} \right)}(A').
\end{equation*}

Then, note that the following bounds hold
\begin{equation*}
\begin{split}
\Gamma^{n,1}_{ij}(t)\leq \wbr{\Q}_{ij}^n(t)(A)-\wbr{\Q}_{ij}^n(0)(A+t)
\leq
\Gamma^{n,2}_{ij}(t)
\end{split}
\end{equation*}
and
\begin{equation*}
\begin{split}
\Gamma^{n,3}_{ij}(t)
\leq \wbr{\Z}_{ij}^n(t)(A')-\wbr{\Z}_{ij}^n(0)(A'+(S_{ij}(\blt{Z}^n,0,t),t))\leq
\Gamma^{n,4}_{ij}(t).
  \end{split}
\end{equation*}
By \cite[Lemma~5.1]{gromoll2008} we have that
\begin{equation*}
\widetilde{X}^n\overset{d} \rightarrow 0\  \text{ and }
X^n\overset{d} \rightarrow 0,
\end{equation*}
as $n \rightarrow \infty$.
By Skorokhod's representation theorem \cite{billingsley1999convergence}, we can assume that all the random elements are defined on a common probability space. Furthermore, by the dominated convergence theorem \cite{rudin1987real}, we have that
$S_{ij}(\wbr{\blt{Z}}^n,s,t)
\rightarrow S_{ij}(\wbr{\blt{Z}},s,t)$, for $s\in[t_0,t]$
as $n\rightarrow \infty$. Moreover, the function $S_{ij}(\wbr{\blt{Z}},s,t)$ is continuous and $S_{ij}(\wbr{\blt{Z}}^n,s,t)$ is monotone in $s$. Hence, we have that
\begin{equation*}
\sup_{t_0 \leq s \leq t}
\left| S_{ij}(\wbr{\blt{Z}}^n,s,t)-S_{ij}
(\wbr{\blt{Z}},s,t)\right|\rightarrow 0.
\end{equation*}
Now, the convergence follows by adapting the conclusion of the proof of \cite[Theorem~5, Section~7.6]{remerova14}.

In the sequel, we show that the fluid limit also satisfies the additional relations in Definition~\ref{def:FMSC}. Observe that by \eqref{eq:Rejected} and the definition of the Riemann-Stieltjes integral, we have that
\begin{equation*}
\wbr{R}^n_{i}(t):=\sum_{j=1}^{J}
\wbr{R}^n_{ij}(t)=\int_{0}^{t}\ind{\wbr{Q}^n_{i}(s^-)=K_i}
d\sum_{j=1}^{J}\wbr{E}^n_{ij}(s).
\end{equation*}
Now, define
\begin{equation*}
\wbr{H}^n_i(t):=\int_{0}^{t} \sum_{j=1}^{J}\lambda_{ij}(s)
\ind{\wbr{Q}^n_{i}(s^-)=K_i} ds,
\end{equation*}
and notice that
\begin{equation*}
\wbr{R}^n_{i}(t)-\wbr{H}^n_{i}(t)=
\int_{0}^{t}\ind{\wbr{Q}^n_{i}(s^-)=K_i}
d\sum_{j=1}^{J}\left(
\wbr{E}^n_{ij}(s)-\int_{0}^{s} \lambda_{ij}(u)du \right).
\end{equation*}
By our assumptions for the arrival process, we obtain that $\wbr{R}^n_{i}(\cdot)-\wbr{H}^n_{i}(\cdot)\overset{d}\rightarrow 0$  as $n\rightarrow \infty$, and hence
$\wbr{H}^n_{i}(\cdot)\overset{d}\rightarrow \wbr{R}_{i}(\cdot)$. Now, by
\eqref{eq:Rejected}, the number of rejected type-$j$ EVs at node $i$ can be written as follows
\begin{equation*}
\wbr{R}^n_{ij}(t)
=\int_{0}^{t}\ind{\wbr{Q}^n_{i}(s^-)=K_i}
d\left(\wbr{E}^n_{ij}(s)-\int_{0}^{s}\lambda_{ij}(u)du\right)
+
\int_{0}^{t}\frac{\lambda_{ij}(s)}{\sum_{h=1}^{J}\lambda_{ih}(s)}
d\wbr{H}^n_{i}(s).
\end{equation*}
Using the assumption of the external arrival process and the fact that $\wbr{H}^n_{i}(\cdot)\overset{d}\rightarrow \wbr{R}_{i}(\cdot)$,
we derive that $\wbr{R}^n_{ij}(\cdot)\overset{d}\rightarrow \wbr{R}_{ij}(\cdot)$ and
\begin{equation*}
\wbr{R}_{ij}(t)=
\int_{0}^{t}
\frac{\lambda_{ij}(s)}{\sum_{h=1}^{J}\lambda_{ih}(s)}
d\wbr{R}_{i}(s).
\end{equation*}

We have proved that any
subsequential limit
$(\wbr{\blt{\Q}}(\cdot),\wbr{\blt{Q}}(\cdot),
\wbr{\blt{\Z}}(\cdot),\wbr{\blt{Z}}(\cdot),\wbr{\blt{R}}(\cdot))$
satisfies the fluid model equations given in  Definition~\ref{def:FMSC}, and hence the proof of
Theorem~\ref{Ch5thm:fluid limit} is completed.

\section{Proofs for Section~\ref{Ch5: Convergence to IP}}
\label{ch5:proofs conv to IP}
\begin{proof}[Proof of Proposition~\ref{Ch5prop:invpointQ}]
First assume that $(\blt{\Q}^*, \blt{q}^*)$ is invariant, i.e.,
$Q_{ij}(t)=q_{ij}^*$ for any $i,j\geq 1$ and $t\geq 0$. We distinguish two cases i) $\sum_{j=1}^{J}q_{ij}^*=K_i$ and ii) $\sum_{j=1}^{J}q_{ij}^*<K_i$.

First, assume that $\sum_{j=1}^{J}q_{ij}^*=K_i$.
By \eqref{eq:Departures} and \eqref{eq:depE}, we have that
\begin{equation}\label{ch5eq:relR1}
\wbr{R}_{ij}(t)=(\lambda_{ij}-\delta_{ij})t.
\end{equation}
Replacing \eqref{ch5eq:relR1} in \eqref{eq:qnet} and taking the limit as time goes to infinity, we obtain
\begin{equation}\label{ch5eq:rel2}
q_{ij}^*=\delta_{ij}\E{D_{ij}}.
\end{equation}
Taking the summation over $j$ in \eqref{ch5eq:relR1} yields
\begin{equation*}
\wbr{R}_{i}(t)=(\sum_{j=1}^{J}
\lambda_{ij}-\sum_{j=1}^{J}\delta_{ij})t.
\end{equation*}
Using the relation
$\wbr{R}_{ij}(t)=\frac{\lambda_{ij}}
{\sum_{h=1}^{J}\lambda_{ih}}\wbr{R}_{i}(t)$
and \eqref{ch5eq:relR1} for $t>0$, we have that
\begin{equation*}
\delta_{ij}=\frac{\lambda_{ij}}
{\sum_{h=1}^{J}\lambda_{ih}}\sum_{j=1}^{J}\delta_{ij}.
\end{equation*}
By \eqref{ch5eq:rel2}, we derive
$q_{ij}^*=\frac{\rho_{ij}}
{\sum_{h=1}^{J}\lambda_{ih}
}\sum_{j=1}^{J}\delta_{ij}$ which yields
$\sum_{j=1}^{J}\delta_{ij}=\frac{\sum_{j=1}^{J}\lambda_{ij}}{\rho_{i}}K_i$ and $\delta_{ij}=\frac{\lambda_{ij}}{\rho_i}K_i$. By \eqref{ch5eq:relR1}, we have that
\begin{equation*}
\wbr{R}_{ij}(t)=\frac{\lambda_{ij}}{\rho_i}
(\rho_i-K_i)t.
\end{equation*}
Replacing the last equation in \eqref{ch5eq:FLQ} and taking $t\rightarrow \infty$, we have that for any Borel set $A\in \mathcal{B}(\R_+)$,
\begin{equation*}
\Q_{ij}^*(A)= \frac{\lambda_{ij}}{\rho_{i}}K_i
 \int_{0}^{\infty} \Prob{D_{ij}\in A+s}ds.
\end{equation*}
Last, by the nonnegativity of $\wbr{R}_{ij}(\cdot)$, we obtain that
$\rho_i> K_i$ in this case.

In the second case, $\sum_{j=1}^{J}q_{ij}^*<K_i$ and by the fluid model equations $\wbr{R}_{ij}(t)=0$ for $t\geq 0$ and for any $i,j\geq 1$. Taking the limit as $t\rightarrow \infty$ in  \eqref{ch5eq:FLQ}, we have that for any Borel set $A\in \mathcal{B}(\R_+)$,
\begin{equation*}
\Q_{ij}^*(A)= \lambda_{ij}
 \int_{0}^{\infty} \Prob{D_{ij}\in A+s}ds.
\end{equation*}

The ``only if'' part of the proposition follows using the same arguments as in the last part of proof in \cite[Theorem~3.6]{kang2015}.
\end{proof}

\begin{proof}[Proof of Proposition~\ref{Ch5prop:invpointZ}]
Uniqueness of the solution of the fixed-point equation follows by \cite[Theorem~2]{remerova14} and \cite{aveklourisstochastic}.
In order to study the asymptotic behavior of $\blt{Z}(\cdot)$, let  $\blt{Q}(0)=\blt{q}^*$. By Proposition~\ref{Ch5prop:invpointQ}, we derive that
$\wbr{R}_{ij}(t)=(\lambda_{ij}-\frac{\lambda_{ij}}{\rho_i}K_i)t$ if $\rho_i>K_i$ and $\wbr{R}_{ij}(t)=0$ if $\rho_i\leq K_i$. This yields $\lambda_{ij}t-\wbr{R}_{ij}(t)=\frac{\lambda_{ij}}{\rho_i}(\rho_i\Min K_i)t$. Now, by the definition of the fluid model (Definition~\ref{def:FMSC}) we have that
\begin{equation*}
\begin{split}
\wbr Z_{ij}(t)=
\wbr Z_{ij}(0) &\Prob{B_{ij}^0 \geq S_{ij}(\blt{z},0,t)  ,D_{ij}^0 \geq t }\\
&+
\frac{\lambda_{ij}}{\rho_i}(\rho_i\Min K_i)\int_{0}^{t}  \Prob{B_{ij} \geq S_{ij}(\blt{z},s,t)  ,D_{ij} \geq t-s } ds.
\end{split}
\end{equation*}
By \cite[Theorem~3]{remerova14}, there exist $\blt{b}^u, \blt{b}^l \in (0,\infty)^{I\times J}$ such that
\begin{equation}\label{eq:b1}
0< b^l_{ij} \leq \liminf_{t\rightarrow \infty} \wbr Z_{ij}(t)\leq
\limsup_{t\rightarrow \infty}\wbr Z_{ij}(t) \leq b^u_{ij}.
\end{equation}
Furthermore, $\blt{b}^u, \blt{b}^l$ satisfy the following relations
\begin{equation}\label{eq:b2}
 b^l_{ij}= \frac{\lambda_{ij}}{\rho_i}(\rho_i\Min K_i)
 \E{D_{ij}\Min \frac{B_{ij}}{
 \sup\limits_{\blt{b}^l\leq \blt{z} \leq \blt{b}^u}
 p_{ij}(\blt{z})}}
\end{equation}
and
\begin{equation*}
 b^u_{ij}= \frac{\lambda_{ij}}{\rho_i}(\rho_i\Min K_i)
 \E{D_{ij}\Min \frac{B_{ij}}{
 \inf\limits_{\blt{b}^l\leq \blt{z} \leq \blt{b}^u}
 p_{ij}(\blt{z})}}.
\end{equation*}
Now, we have assumed that the network is monotone, and hence
$\sup\limits_{\blt{b}^l\leq \blt{z} \leq \blt{b}^u}p_{ij}(\blt{z})=
p_{ij}(\blt{b}^l)$
and
$\inf\limits_{\blt{b}^l\leq \blt{z} \leq \blt{b}^u}p_{ij}(\blt{z})=
p_{ij}(\blt{b}^u)$. Applying the last relation in \eqref{eq:b1} and \eqref{eq:b2}, we have that
\begin{equation*}
 b^l_{ij}= \frac{\lambda_{ij}}{\rho_i}(\rho_i\Min K_i)
 \E{D_{ij}\Min \frac{B_{ij}}{
 p_{ij}(\blt{b}^l)
 }}
\end{equation*}
and
\begin{equation*}
 b^u_{ij}= \frac{\lambda_{ij}}{\rho_i}(\rho_i\Min K_i)
 \E{D_{ij}\Min \frac{B_{ij}}{
p_{ij}(\blt{b}^u)
 }}.
\end{equation*}
In other words, $\blt{b}^u, \blt{b}^l$ satisfy the fixed-point equation~\eqref{ch5eq:invariant} and by the uniqueness of the solution of the fixed-point equation, we obtain
$\blt{b}^u=\blt{b}^l=\blt{z}^*$ and hence
$\lim\limits_{t\rightarrow \infty} \wbr Z_{ij}(t)=z_{ij}^*$.
\end{proof}

\addcontentsline{toc}{section}{Acknowledgements}
\section*{Acknowledgements}
The research of
Angelos Aveklouris is funded by a TOP grant of the Netherlands Organization
for
Scientific Research (NWO) through project 613.001.301. The research of Maria
Vlasiou is supported by the NWO MEERVOUD grant 632.003.002.
The research of Bert Zwart is partly supported by the NWO VICI grant
639.033.413.

\phantomsection
\addcontentsline{toc}{section}{References}
\begin{footnotesize}
\bibliography{Mybibliography_Angelos_Ab}

\begin{thebibliography}{10}

\bibitem{ardakanian2013}
O.~Ardakanian, C.~Rosenberg, and S.~Keshav.
\newblock Distributed control of electric vehicle charging.
\newblock In {\em In Proceedings of the 4th Intl. conference on future energy
  systems}, pages 101--112, 2013.

\bibitem{arif2016}
A.~Arif, M.~Babar, T.~I. Ahamed, E.~A.l.-Ammar, P.~Nguyen, I.~R. Kamphuis, and
  N.~Malik.
\newblock Online scheduling of plug-in vehicles in dynamic pricing schemes.
\newblock {\em Sustainable Energy, Grids and Networks}, 7:25--36, 2016.

\bibitem{aveklourisstochastic}
A.~Aveklouris, M.~Vlasiou, and B.~Zwart.
\newblock A stochastic resource-sharing network for electric vehicle charging.
\newblock {\em IEEE Transactions on Control of Network Systems},
  6(3):1050--1061, 2019.

\bibitem{Baran89}
M.~Baran and F.~F. Wu.
\newblock Optimal sizing of capacitors placed on a radial distribution system.
\newblock {\em IEEE Trans. Power Del.}, 4(1):735--743, 1989.

\bibitem{billingsley1995probability}
P.~Billingsley.
\newblock {\em Probability and Measure. Wiley Series in Probability and
  Mathematical Statistics}.
\newblock Wiley, New York, third edition, 1995.

\bibitem{billingsley1999convergence}
P.~Billingsley.
\newblock {\em Convergence of probability measures}.
\newblock Wiley, New York, second edition, 1999.

\bibitem{Bonald2006}
T.~Bonald, L.~Massouli{\'e}, A.~Proutiere, and J.~Virtamo.
\newblock A queueing analysis of max-min fairness, proportional fairness and
  balanced fairness.
\newblock {\em Queueing Systems}, 53(1):65--84, 2006.

\bibitem{BonaldProutiere2002}
T.~Bonald and A.~Proutiere.
\newblock Insensitive bandwidth sharing in data networks.
\newblock {\em Queueing Systems}, 44(1):69--100, 2003.

\bibitem{BorstEgorovaZwart2014}
S.~Borst, R.~Egorova, and B.~Zwart.
\newblock Fluid limits for bandwidth-sharing networks in overload.
\newblock {\em Mathematics of Operations Research}, 39(2):533--560, 2014.

\bibitem{carvalho2015critical}
R.~Carvalho, L.~Buzna, R.~Gibbens, and F.~Kelly.
\newblock Critical behaviour in charging of electric vehicles.
\newblock {\em New Journal of Physics}, 17(9):095001, 2015.

\bibitem{chen2001fundamentals}
H.~Chen and D.~Yao.
\newblock {\em Fundamentals of queueing networks: performance, asymptotics, and
  optimization}, volume~46.
\newblock New York: Springer-Verlag, 2001.

\bibitem{dvijotham2017high}
K.~Dvijotham, E.~Mallada, and J.~Simpson-Porco.
\newblock High-voltage solution in radial power networks: Existence,
  properties, and equivalent algorithms.
\newblock {\em IEEE control systems letters}, 1(2):322--327, 2017.

\bibitem{fan2012}
Z.~Fan.
\newblock A distributed demand response algorithm and its application to phev
  charging in smart grids.
\newblock {\em IEEE Transactions on Smart Grid}, 3(3):1280--1290, 2012.

\bibitem{gromoll2008}
C.~Gromoll, P.~Robert, and B.~Zwart.
\newblock Fluid limits for processor-sharing queues with impatience.
\newblock {\em Mathematics of Operations Research}, 33(2):375--402, 2008.

\bibitem{gromoll2009}
C.~Gromoll and R.~Williams.
\newblock Fluid limits for networks with bandwidth sharing and general document
  size distributions.
\newblock {\em The Annals of Applied Probability}, 19(1):243--280, 2009.

\bibitem{2025scenario}
G.~Hoogsteen, A.~Molderink, J.~L. Hurink, G.~J. Smit, B.~Kootstra, and
  F.~Schuring.
\newblock Charging electric vehicles, baking pizzas, and melting a fuse in
  lochem.
\newblock {\em CIRED-Open Access Proceedings Journal}, 2017(1):1629--1633,
  2017.

\bibitem{kang2015}
W.~Kang.
\newblock Fluid limits of many-server retrial queues with nonpersistent
  customers.
\newblock {\em Queueing Systems}, 79(2):183--219, 2015.

\bibitem{kang2010fluid}
W.~Kang and K.~Ramanan.
\newblock Fluid limits of many-server queues with reneging.
\newblock {\em The Annals of Applied Probability}, 20(6):2204--2260, 2010.

\bibitem{kang2012asymptotic}
W.~Kang and K.~Ramanan.
\newblock Asymptotic approximations for stationary distributions of many-server
  queues with abandonment.
\newblock {\em The Annals of Applied Probability}, 22(2):477--521, 2012.

\bibitem{KangKellyLeeWilliams2008}
W.~N. Kang, F.~P. Kelly, N.~H. Lee, and R.~J. Williams.
\newblock State space collapse and diffusion approximation for a network
  operating under a fair bandwidth sharing policy.
\newblock {\em The Annals of Applied Probability}, 19(5):1719--1780, 2009.

\bibitem{kaspi2011}
H.~Kaspi and K.~Ramanan.
\newblock Law of large numbers limits for many-server queues.
\newblock {\em The Annals of Applied Probability}, 21(1):33--114, 2011.

\bibitem{kelly1997charging}
F.~Kelly.
\newblock Charging and rate control for elastic traffic.
\newblock {\em Trans. Emerging Telecommun. Technol.}, 8(1):33--37, 1997.

\bibitem{kersting2012distribution}
W.~Kersting.
\newblock {\em Distribution system modeling and analysis}.
\newblock CRC press, Boca Raton, FL, 2012.

\bibitem{low14I}
S.~Low.
\newblock Convex relaxation of optimal power flow--part {I}: Formulations and
  equivalence.
\newblock {\em IEEE Trans. Control Netw. Syst.}, 1(1):15--27, 2014.

\bibitem{low14}
S.~Low.
\newblock Convex relaxation of optimal power flow--part {II}: Exactness.
\newblock {\em IEEE Trans. Control Netw. Syst.}, 1(2):177--189, 2014.

\bibitem{massoulie1999bandwidth}
L.~Massouli{\'e} and J.~Roberts.
\newblock Bandwidth sharing: objectives and algorithms.
\newblock In {\em In Proceedings of INFOCOM.}, volume~3, pages 1395--1403,
  1999.

\bibitem{pang2007martingale}
G.~Pang, R.~Talreja, and W.~Whitt.
\newblock Martingale proofs of many-server heavy-traffic limits for markovian
  queues.
\newblock {\em Probability Surveys}, 4:193--267, 2007.

\bibitem{reed2009}
J.~Reed.
\newblock The {$G/GI/N$} queue in the halfin--whitt regime.
\newblock {\em The Annals of Applied Probability}, 19(6):2211--2269, 2009.

\bibitem{ReedZwart2014}
J.~Reed and B.~Zwart.
\newblock Limit theorems for markovian bandwidth sharing networks with rate
  constraints.
\newblock {\em Operations Research}, 62(6):1453--1466, 2014.

\bibitem{remerova14}
M.~Remerova, J.~Reed, and B.~Zwart.
\newblock Fluid limits for bandwidth-sharing networks with rate constraints.
\newblock {\em Mathematics of Operations Research}, 39(3):746--774, 2014.

\bibitem{rudin1987real}
W.~Rudin.
\newblock {\em Real and complex analysis}.
\newblock Tata McGraw-Hill Education, 1987.

\bibitem{fionaswapping}
F.~Sloothaak, J.~Cruise, S.~Shneer, M.~Vlasiou, and B.~Zwart.
\newblock Complete resource pooling of a load balancing policy for a network of
  battery swapping stations.
\newblock {\em preprint arXiv:1902.04392}, 2019.

\bibitem{VlasiouZhangZwart2016}
M.~Vlasiou, J.~Zhang, and B.~Zwart.
\newblock Insensitivity of proportional fairness in critically loaded bandwidth
  sharing networks.
\newblock {\em preprint arXiv:1411.4841}, 2014.

\bibitem{YeYao2012}
H.-Q. Ye and D.~Yao.
\newblock A stochastic network under proportional fair resource
  control-diffusion limit with multiple bottlenecks.
\newblock {\em Operations Research}, 60(3):716--738, 2012.

\bibitem{yudovina2015socially}
E.~Yudovina and G.~Michailidis.
\newblock Socially optimal charging strategies for electric vehicles.
\newblock {\em IEEE Trans. Autom. Control}, 60(3):837--842, 2015.

\end{thebibliography}
\bibliographystyle{abbrv}
\end{footnotesize}
\end{document}